\numberwithin{equation}{section}
\newtheorem{thm}{Theorem}[section]
\newtheorem{lem}[thm]{Lemma}
\newtheorem{prop}[thm]{Proposition}
\theoremstyle{remark}
\newtheorem{rmk}{Remark}[section]
\theoremstyle{definition}
\newtheorem{defn}{Definition}[section]
\begin{document}

\title{Zero-electron-mass limit of Euler-Poisson equations}

\author{{Jiang Xu}$^1$\thanks{jiangxu\underline{
}79@yahoo.com.cn} , Ting Zhang$^2$\thanks{zhangting79@zju.edu.cn}
  \\ \textit{\small 1.  Department of Mathematics, Nanjing University of Aeronautics and Astronautics,
              Nanjing 211106, P.R.China}  \\
     \textit{\small 2. Department of Mathematics, Zhejiang University,
Hangzhou 310027, China}}
\date{}

\maketitle

\begin{abstract}
This paper is concerned with multidimensional Euler-Poisson
equations for plasmas. The equations take the form of Euler
equations for the conservation laws of the mass density and current
density for charge-carriers (electrons and ions), coupled to a
Poisson equation for the electrostatic potential. We study the limit
to zero of some physical parameters which arise in the scaled
Euler-Poisson equations, more precisely, which is the limit of
vanishing ratio of the electron mass to the ion mass. When the
initial data are small in critical Besov spaces, by virtue of the
``Shizuta-Kawashima" skew-symmetry condition, we establish the
uniform global existence and uniqueness of classical solutions. Then
we develop new frequency-localization Strichartz-type estimates for
the equation of acoustics (a modified wave equation) with the aid of
the detailed analysis of the semigroup formulation generated by this
modified wave operator. Finally, it is shown that the uniform
classical solutions converge towards that of the incompressible
Euler equations (for \textit{ill-prepared} initial data) in a
refined way as the scaled electron-mass tends to zero.\\
\textbf{Keyword:} Zero-electron-mass limits, skew-symmetry, Euler-Poisson
equations, critical Besov spaces, Strichartz-type estimate
\end{abstract}

\section{Introduction}
The Euler-Poisson equations, sometimes called as the (macroscopic)
hydrodynamic models, are to describe the transport of
charge-carriers (electrons and ions) in semiconductor devices or
plasmas. The system consists of the conservation laws for the mass
density and current density for carriers, with a Poisson equation
for the electrostatic potential. Using the classical moment method,
it is derived from the semi-classical Boltzmann-Poisson equations,
for details, see \cite{MRS}. In recent three decades, the
Euler-Poisson equations has attracted increasing attention both
numerical simulation and theoretical analysis, since it can capture
more physical phenomena that occur in submicron devices or in the
presence of high electric fields than the traditional
drift-diffusion models.

More specifically, we consider an un-magnetized plasmas consisting
of electrons with charge $q_e = -1$ and of a single species of ions
with charge $q_i = +1$. Denote by $n_e = n_e(t,x), \mathbf{v}_e =
\mathbf{v}_e(t,x)$ (resp., $n_i, \mathbf{v}_i$) the scaled density
and mean velocity of the electrons (resp., ions) and by
$\phi=\phi(t,x)$ the scaled electrostatic potential. From
\cite{JP1}, these unknowns satisfy the following scaled
Euler-Poisson equations:
\begin{equation}
\left\{
\begin{array}{l}
\frac{\partial}{\partial t}n_a + \mathrm{div}(n_a\mathbf{v}_a) = 0,
\\[4mm]
\delta_a\frac{\partial}{\partial t}(n_a\mathbf{v}_a) +
\delta_a\mathrm{div}(n_a\mathbf{v}_a\otimes\mathbf{v}_a) + \nabla
P_a(n_{a})= - q_an_a\nabla\phi - \delta_a\frac{n_a\mathbf{v}_a}{\tau_a},\\[4mm]
\lambda^2\Delta\phi = n_e - n_i + C(x),
\end{array} \right.\label{M-E1}
\end{equation}
where  $a = e, i$ and $(t,x)\in [0,+\infty)\times
\mathbb{R}^{N}(N\geq2)$. The symbols $\mathrm{div},\nabla,\Delta$
and $\otimes$ are the  $x$-divergence operator, gradient
operator, Laplacian operator  and the tensor product of two vectors respectively;
the parameters $\lambda,\tau_{a}>0$ are the scaled constants for the
Debye length and the momentum relaxation time of electrons (if $a =
e$; otherwise, ions if $a = i$) respectively; the pressure
$P_{a}(n_{a})$ is a smooth function satisfying
\begin{eqnarray*}
P'_{a}(n_{a})>0 \ \ \mbox{for all}\ \ n_{a}>0.
\end{eqnarray*}
For the sake of (mathematical) simplicity, we assume that it
satisfies the usual ``$\gamma$-law"
\begin{eqnarray*}
P_{a}(n_{a})=c_{a}n_{a}^{\gamma_{a}},
\end{eqnarray*}
where $c_{a}>0$ is a physical constant. In such case, the plasmas
is called \emph{isothermal} if $\gamma_{a}=1$ and \emph{isentropic}
if $\gamma_{a}>1$. The function $C(x)$, which only depends on the
space variable, represents the density of fixed charged background
ions (doping profile).

The dimensionless parameter $\delta_a$ in the Euler-Poisson
equations (\ref{M-E1}) is given by
$$
\delta_a = \frac{m_a v^2_0}{k_BT_0},
$$
where $m_e$ (resp. $m_i$) is the mass of a single electron (resp.
ion), $k_B$ is the Boltzmann constant, $v_0$ and $T_0$ are typical
velocity and temperature values for the plasmas respectively. The reader is refer
to \cite{JP1} for details about the scaling and the physical
assumptions. Since $m_i$ is not smaller than the mass of a proton,
we have
$$
\frac{m_e}{m_i}\leq \frac{m_e}{\mbox{the mass of a proton}} \simeq
5.45\times 10^{- 4}.
$$
Namely, $\frac{m_e}{m_i}$ is a small number. Thus, if $v^2_0$ is
chosen to be $k_BT_0/m_i$, then $\delta_i = 1$ and $\delta_e =
\frac{m_e}{m_i} \ll1$. The main goal of this paper is to investigate
the limit as $\delta_e$ goes to zero.

In plasmas physics, the zero-electron-mass assumption (\textit{i.e.}
$\delta_e = 0$) is widely used, \textit{e.g.}, see \cite{He,KD}. For
simplicity, we assume that $\bar{n}: = n_i - C(x)$ is a given
positive constant and therefore consider the unipolar Euler-Poisson
equations only, instead of (\ref{M-E1}):
\begin{equation}
\left\{
\begin{array}{l}
n_t + \mathrm{div}(n\mathbf{v}) = 0 \\
(n\mathbf{v})_t + \mathrm{div}(n\mathbf{v}\otimes\mathbf{v}) +
\frac{\nabla P(n)}{\epsilon^2} =
\frac{n\nabla\phi}{\epsilon^2}-n\mathbf{v}\\
\Delta\phi = n- \bar{n},
\end{array} \right.\label{M-E2}
\end{equation}
with the initial data
\begin{equation}
(n,\mathbf{v})(x,0)=(n_{0},\mathbf{v}_{0}), \label{M-E3}
\end{equation}
where we set $\tau_e=1=\lambda$ and
$$
n = n_e,\qquad \mathbf{v} = \mathbf{v}_e, \qquad
\delta_e=\epsilon^2, \qquad P = P_e=An^\gamma.
$$

At the formal level, if $\nabla P(n)\rightarrow0,
\nabla\phi\rightarrow0,$ when $\epsilon$ goes to zero. Hence $n$
must be the positive constant $\bar{n}$. Then passing to the limit
in the mass conservation equation, we get
$\mathrm{div}\mathbf{v}\rightarrow0$. Coming back to the momentum
equation, we conclude that $\mathbf{u}$ (the limit of $\mathbf{v}$)
must satisfy the incompressible Euler equations with damping
\begin{equation*}
      \left\{
      \begin{array}{l}
    \mathbf{u}_t+\mathbf{u}\cdot\nabla \mathbf{u}+\mathbf{u}+\nabla \Pi=0,\\
    \mathrm{div}\mathbf{u}=0,\\
        \mathbf{u}|_{t=0}=u_0,
      \end{array}
      \right.
    \end{equation*}
which also can be written
\begin{equation}
      \left\{
      \begin{array}{l}
    \mathbf{u}_t+\mathcal{P}(\mathbf{u}\cdot\nabla \mathbf{u})+\mathbf{u}=0,\\
    \mathrm{div}\mathbf{u}=0,\\
        \mathbf{u}|_{t=0}=u_0=\mathcal{P}\mathbf{v}_0,
      \end{array}
      \right.\label{M-E4}
    \end{equation}
where $\mathcal{P}$ stands for the Leray projector on solenoidal
vector fields.

Moreover, if we assume that $n=\bar{n}+O(\epsilon^2),
\mathrm{div}\mathbf{v}=O(\epsilon)$, then $\nabla
P(n)=O(\epsilon^2), \nabla\phi=O(\epsilon^2)$. Indeed, this entails
that $n_{t}, \mathbf{v}_{t}$ are uniformly bounded so that the
dangerous time oscillations can not occur. Starting from this simple
consideration, Al\`{\i}, Chen, J\"{u}ngel and Peng \cite{ACJP}
justified the zero-electron-mass limit ($\epsilon\rightarrow0$) of
(\ref{M-E2}) with data of the following type
$$
n_{0}=\bar{n}+\epsilon^2n_{0,1},\ \ \ \
\mathbf{v}_{0}=\bar{\mathbf{v}}(x,0)+\epsilon\mathbf{v}_{0,1}
$$
with $\mathrm{div}\bar{\mathbf{v}}(x,0)=0$, and $(n_{0,1},
\mathbf{v}_{0,1})$ uniformly bounded in the the periodic Sobolev
space $H^{\sigma}(\mathbb{T}^N)(\sigma>1+N/2)$. Such data are
referred as \textit{well-prepared} data in the usual PDE's
terminology. In the case of \textit{well-prepared} data, the
zero-electron-mass limit has some similarities with the classical
low-Mach-number limit in the compressible Euler equations studied by
Klainerman and Majda \cite{KM1,KM2}. The unique difference is that
there is an extra singularity from the electron-field term. Al\`{\i}
\textit{et al.} \cite{ACJP} overcame this difficulty by a careful
use of the mass conservation and the Poisson equation, and obtained
the uniform \textit{a priori} estimates with respect to the scaled
zero-electron-mass. Then they justified the zero-electron-mass limit
for smooth solutions from (\ref{M-E2})-(\ref{M-E3}) to (\ref{M-E4})
(incompressible limit) by virtue of the Aubin-Lions compactness
lemma. The precise limit behaviors of smooth solutions were shown by
the recent work \cite{XY2}. The zero-electron-mass limit for weak
entropy solutions was studied by Goudon, J\"{u}ngel and Peng
\cite{GJP} under some restrictive assumptions, with the help of the
kinetic formulation, the monotonicity and compactness arguments. It
is worth noting that these results only provided \textit{local}
convergence or the case of \textit{well-prepared} data.

In the present paper, we shall study the zero-electron-mass limit
since few works can be found in mathematics literatures, except
\cite{ACJP,GJP}. We focus on the case of \textit{ill-prepared} data
and hope to get the \textit{global} convergence of classical
solutions. We make a weaker assumption that the initial density only
satisfies $n_{0}=\bar{n}+\epsilon n_{0,1}$ with
$(n_{0,1},\mathbf{v}_{0})$ uniformly bounded (in an appropriate
functional space).

\subsection{Symmetry}
It is convenient to state the basic ideas and main results of this
paper, we first introduce a function transform to reduce
(\ref{M-E2}) to a symmetric hyperbolic-elliptic form.

For the isentropic case $(\gamma>1)$, by defining the sound speed
$$\psi(n)=\sqrt{p'(n)},$$
and the sound speed $\bar{\psi}=\psi(\bar{n})$ at a background
density $\bar{n}$, we set
\begin{equation}m=\frac{2}{\gamma-1}\Big(\psi(n)-\bar{\psi}\Big).\label{M-E5}\end{equation}
Then (\ref{M-E2}) can be rewritten as
\begin{equation}
\left\{
\begin{array}{l}m_{t}+\bar{\psi}\mbox{div}\mathbf{v} =
-\mathbf{v}\cdot\nabla m-\frac{\gamma-1}{2}m\mbox{div}\mathbf{v},\cr
 \mathbf{v}_{t}+\bar{\psi}\epsilon^{-2}\nabla m =
 -\mathbf{v}\cdot\nabla\mathbf{v}-\frac{\gamma-1}{2}\epsilon^{-2}m\nabla m+
 \epsilon^{-2}\nabla\phi - \mathbf{v},\cr
 \Delta\phi=h(m),
 \end{array} \right.\label{M-E6}
\end{equation}
where
$$
h(m) =
\left\{(A\gamma)^{-\frac{1}{2}}(\frac{\gamma-1}{2}m+\bar{\psi})
\right\}^{\frac{2}{\gamma-1}}-\bar{n}
$$
is a smooth function on the domain
$\{m|\frac{\gamma-1}{2}m+\bar{\psi}>0\}$ satisfying $h(0)=0$. The
corresponding initial data become
\begin{equation}
(m_{0},\mathbf{v}_{0},\nabla\phi_{0}) =
\Big\{\frac{2}{\gamma-1}\Big(\psi(n_{0})-\bar{\psi}\Big),\mathbf{v}_{0},
\nabla\Delta^{-1}(n_{0}-\bar{n}) \Big\} . \label{M-E7}
\end{equation}

For the isothermal case where $\gamma=1$, the form (\ref{M-E6}) is
still valid with $\bar\psi = \sqrt{A}$. However, it depends on the
following enthalpy variable change
\begin{eqnarray}
m = \sqrt{A}(\ln n - \ln\bar n)\label{M-E8},
\end{eqnarray}
which the details are referred to \cite{FXZ}. About the equivalence
for classical solutions away from the vacuum between
(\ref{M-E2})-(\ref{M-E3}) and (\ref{M-E6})-(\ref{M-E7}), see Section
3. Moreover we introduce the variables as follows:
$$
m^{\epsilon}=\frac{m}{\epsilon},\ \ \
\mathbf{v}^{\epsilon}=\mathbf{v},\ \ \
\nabla\phi^{\epsilon}=\frac{\nabla\phi}{\epsilon}.
$$
Then the new variables satisfy
\begin{equation}
\left\{
\begin{array}{l}m_{t}^{\epsilon} + \bar{\psi}\epsilon^{-1}\mbox{div}\mathbf{v}^{\epsilon}
=-\mathbf{v}^{\epsilon}\cdot\nabla m^{\epsilon}-
\frac{\gamma-1}{2}m^{\epsilon}\mbox{div}\mathbf{v}^{\epsilon},\cr
 \mathbf{v}^{\epsilon}_{t}+\bar{\psi}\epsilon^{-1}\nabla m^{\epsilon}=
 -\mathbf{v}^{\epsilon}\cdot\nabla\mathbf{v}^{\epsilon}-
 \frac{\gamma-1}{2}m^{\epsilon}\nabla m^{\epsilon}
 +\epsilon^{-1}\nabla\phi^{\epsilon} - \mathbf{v}^{\epsilon},\cr
\Delta\phi^{\epsilon}=\epsilon^{-1}h(\epsilon m^{\epsilon}),
\end{array} \right.\label{M-E9}
\end{equation}
with the initial data
\begin{equation}
(m^{\epsilon}_{0},\mathbf{v}^{\epsilon}_{0},\nabla\phi^{\epsilon}_{0})
= \Big\{\frac{2}{\gamma-1}\Big(\frac{\psi(
n_{0})-\bar{\psi}}{\epsilon}\Big),\mathbf{v}_{0},
\nabla\Delta^{-1}\Big(\frac{n_{0}-\bar{n}}{\epsilon}\Big) \Big\}.
\label{M-E10}
\end{equation}

One expects $\mathbf{v}^{\epsilon}$ to tend to $\mathbf{u}$ in
(\ref{M-E9}) which $\mathbf{u}$ solves the incompressible Euler
equations (\ref{M-E4}) as $\epsilon\rightarrow0$. The expected
convergence however is not easy to justify rigorously. Here, there
are two main difficulties. The first is the singularity from the
electron-field term, which can not be overcome by using the
symmetrizer of main part of hyperbolic system as in \cite{KM1,KM2}.
We adopt a new (but small!) technique to deal with the singular
electron-field term, which can help us to simplify the similar
analysis as \cite{ACJP} heavily, see (\ref{M-E19})-(\ref{M-E20}).
The second is that one has to face the propagation of acoustic waves
with the speed $\epsilon^{-1}$, a phenomenon which does not occur in
the case of \textit{well-prepared} data. To solve this, we need to
develop some new ideas. Inspired by \cite{Danchin}, we split the
velocity into a divergence-free part and a gradient part to obtain
the equation of acoustics
\begin{equation*}\left\{\begin{array}{l}
      m^\epsilon_t+\bar{\psi}\frac{\Lambda d^\epsilon}{\epsilon}=F,\\
    d^\epsilon_t+d^\epsilon-\bar{\psi}\frac{\Lambda
    m^\epsilon}{\epsilon}-\frac{h'(0)\Lambda^{-1}m^\epsilon}{\epsilon}
    = G,
     \end{array}
    \right.
        \end{equation*}
i.e.,
    \begin{eqnarray*}
    d^\epsilon_{tt}+d^\epsilon_t-\frac{\bar{\psi}^2}{\epsilon^2}\Delta d^\epsilon +\frac{h'(0)\bar{\psi}}{\epsilon^2}d^\epsilon
    = G_t+\frac{\bar{\psi}\Lambda}{\epsilon}F+\frac{h'(0)\Lambda^{-1}}{\epsilon}F,
        \end{eqnarray*}
(for details, see Section 5). Then we establish some dispersive
estimates according to the semigroup theory of this modified wave
operator, and achieve a new frequency-localization Strichartz-type
estimate which is used to pass to the zero-electron-mass limit in a
refined way.

Based on the recent work \cite{FXZ,I}, we still choose the critical
Besov space framework in space-variable $x$ (a subalgebra of
${\mathcal{W}}^{1,\infty}$) to study the global well-posedness and
the zero-electron-mass limit of classical solutions to the system
(\ref{M-E2})-(\ref{M-E3}). The main results are states as follows.

\subsection{Main results}
\begin{thm}\label{thm1.1}
Set $\sigma=1+N/2$. There is a positive constant $\delta_{0}$
independent of $\epsilon$, such that if
$$\Big\|\Big(\frac{n_{0}-\bar{n}}{\epsilon},\mathbf{v}_{0},\frac{\mathbf{e}_{0}}{\epsilon}\Big)\Big\|_{B^{\sigma}_{2,1}(\mathbb{R}^{N})}\leq \delta_{0}$$
for $0<\epsilon\leq\epsilon_{0}$, and $\mathbf{e}_{0}:=
\nabla\Delta^{-1}(n_{0}-\bar{n})$, then the system
(\ref{M-E2})-(\ref{M-E3}) admits a unique global solution
$(n,\mathbf{v},\nabla\phi)$ satisfying
$$
(n-\bar{n},\mathbf{v},\nabla\phi) \in
\mathcal{C}(\mathbb{R}^{+},B^{\sigma}_{2,1}(\mathbb{R}^{N})).
$$
Moreover, the uniform energy estimate holds:
\begin{eqnarray*}&&\Big\|\Big(\frac{n-\bar{n}}{\epsilon},\mathbf{v},
\frac{\nabla\phi}{\epsilon}\Big)(\cdot,t)\Big\|_{B^{\sigma}_{2,1}(\mathbb{R}^{N})}\\
&\leq&
C_{0}\Big\|\Big(\frac{n_{0}-\bar{n}}{\epsilon},\mathbf{v}_{0},\frac{\mathbf{e}_{0}}{\epsilon}\Big)\Big\|_{B^{\sigma}_{2,1}(\mathbb{R}^{N})}\exp(-\mu_{0}t),
\ \ t\geq0,\end{eqnarray*} where $\mu_{0}$, $C_{0}$ are some
positive constants independent of $\epsilon$.
\end{thm}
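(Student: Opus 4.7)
The plan is to work with the equivalent symmetric hyperbolic-elliptic formulation (\ref{M-E9}) (justified in Section 3) and to enlarge it by treating $\mathbf e^\epsilon:=\nabla\phi^\epsilon$ as an independent unknown. Differentiating the Poisson relation $\Delta\phi^\epsilon=\epsilon^{-1}h(\epsilon m^\epsilon)$ in $t$ and using the scaled mass-conservation law, one derives an evolution equation
\[
\mathbf e^\epsilon_t+\bar n\,\epsilon^{-1}\mathcal Q\mathbf v^\epsilon=-\epsilon^{-1}\mathcal Q\bigl(h(\epsilon m^\epsilon)\mathbf v^\epsilon\bigr),\qquad \mathcal Q:=\nabla\Delta^{-1}\mathrm{div},
\]
which, appended to (\ref{M-E9}), yields a closed quasilinear symmetric system for $(m^\epsilon,\mathbf v^\epsilon,\mathbf e^\epsilon)$. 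I would first establish local existence for each fixed $\epsilon>0$ in $\mathcal C([0,T^\epsilon_\ast);B^\sigma_{2,1}(\mathbb R^N))$ by a standard Friedrichs-smoothing and Banach fixed-point scheme, using the embedding $B^\sigma_{2,1}\hookrightarrow W^{1,\infty}$. The theorem then reduces to producing a uniform-in-$\epsilon$ a priori estimate with exponential decay, after which global existence follows by the usual continuation argument.

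For the a priori estimate I apply the inhomogeneous dyadic block $\Delta_j$ to the enlarged system and form the weighted energy
\[
\mathcal E_j(t):=\tfrac12\bigl(\|\Delta_j m^\epsilon\|_{L^2}^2+\|\Delta_j\mathbf v^\epsilon\|_{L^2}^2+\bar n^{-1}\|\Delta_j\mathbf e^\epsilon\|_{L^2}^2\bigr).
\]
Two cancellations keep $\mathcal E_j$ under control uniformly in $\epsilon$: the skew-adjoint pair $\bar\psi\epsilon^{-1}(\mathrm{div}\mathbf v^\epsilon,\nabla m^\epsilon)$ integrates out as in the classical low-Mach analysis, while the potentially singular pair $-\epsilon^{-1}\int\Delta_j\mathbf v^\epsilon\cdot\Delta_j\mathbf e^\epsilon+\epsilon^{-1}\int\Delta_j\mathcal Q\mathbf v^\epsilon\cdot\Delta_j\mathbf e^\epsilon$ vanishes exactly, because $\mathbf e^\epsilon$ is curl-free and hence $\mathcal Q\mathbf e^\epsilon=\mathbf e^\epsilon$. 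This is precisely the small technique announced around (\ref{M-E19})-(\ref{M-E20}). The residual nonlinear transport and product terms are then handled by Bony paraproducts and commutator estimates in $B^\sigma_{2,1}$, which is a Banach algebra since $\sigma>N/2$.

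The basic estimate only delivers the damping contribution $\|\Delta_j\mathbf v^\epsilon\|_{L^2}^2$; to extract dissipation for $m^\epsilon$ and $\mathbf e^\epsilon$ I would invoke the Shizuta-Kawashima mechanism. Concretely, I would construct a suitably $\epsilon$-rescaled cross functional $\mathcal L_j(t)$ built from inner products such as $\int\Delta_j\mathbf v^\epsilon\cdot\nabla\Delta_j m^\epsilon\,dx$ and $\int\Delta_j\mathbf v^\epsilon\cdot\Delta_j\mathbf e^\epsilon\,dx$, arranged so that $|\mathcal L_j|\lesssim\mathcal E_j$ uniformly in $\epsilon$. Substituting the equations shows that $\tfrac{d}{dt}\mathcal L_j$ carries a favourable $-c\bigl(\|\Delta_j\nabla m^\epsilon\|_{L^2}^2+\|\Delta_j\mathbf e^\epsilon\|_{L^2}^2\bigr)$ contribution, modulo terms absorbable by $\mathcal E_j$ and nonlinear residuals. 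A Lyapunov functional $\mathcal E_j+\eta\mathcal L_j$ with small $\eta>0$ then satisfies $\tfrac{d}{dt}(\mathcal E_j+\eta\mathcal L_j)+\kappa\,\mathcal E_j\lesssim(\mathrm{nonlinear})$ at every dyadic scale, with the Poisson relation $\Delta\phi^\epsilon=h'(0)m^\epsilon+O(\epsilon)$ used to bridge $m^\epsilon$ and $\mathbf e^\epsilon$ at the low-frequency end. Multiplying by $2^{j\sigma}$, summing in $j$, and absorbing the nonlinear contribution by smallness of the data closes the estimate uniformly in $\epsilon$ and produces the advertised exponential decay $C_0e^{-\mu_0 t}$. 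The main obstacle throughout is precisely the singular coupling $\epsilon^{-1}\nabla\phi^\epsilon$, which no naive hyperbolic symmetrizer in the spirit of \cite{KM1,KM2} controls; the enlargement and the curl-free cancellation above are what make the uniform-in-$\epsilon$ analysis go through in the critical Besov framework.
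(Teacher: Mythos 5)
Your proposal is correct and follows essentially the same route as the paper: the enlarged symmetric system with $\mathbf e^{\epsilon}=\nabla\phi^{\epsilon}$ evolving by $\mathbf e^{\epsilon}_t+\bar n\epsilon^{-1}\mathcal Q\mathbf v^{\epsilon}=-\epsilon^{-1}\mathcal Q(h(\epsilon m^{\epsilon})\mathbf v^{\epsilon})$ is exactly the paper's identity (\ref{M-E19})/(\ref{M-E38}), your weighted dyadic energy with the $\bar n^{-1}\|\Delta_j\nabla\phi^{\epsilon}\|_{L^2}^2$ term and the curl-free cancellation reproduces (\ref{M-E15})--(\ref{M-E20}), and your $\epsilon$-rescaled cross functionals $\int\Delta_j\mathbf v^{\epsilon}\cdot\nabla\Delta_j m^{\epsilon}$ and $\int\Delta_j\mathbf v^{\epsilon}\cdot\Delta_j\mathbf e^{\epsilon}$, with the Poisson relation converting $\int\nabla\Delta_j m^{\epsilon}\cdot\Delta_j\nabla\phi^{\epsilon}$ into $-h'(0)\|\Delta_j m^{\epsilon}\|_{L^2}^2$, are the physical-space counterparts of the paper's Shizuta--Kawashima functional $\epsilon\,\mathrm{Im}\int|\xi|(\widehat{\Delta_qW^{\epsilon}})^{\ast}K(\xi)\widehat{\Delta_qW^{\epsilon}}d\xi$ together with (\ref{M-E34}), (\ref{M-E36})--(\ref{M-E37}). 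The remaining steps (commutator/paraproduct estimates, $\ell^1$ summation with weight $2^{j\sigma}$ after taking square roots, Gronwall and continuation) coincide with the paper's Lemmas \ref{lem4.5}--\ref{lem4.7} and the proof of Proposition \ref{prop4.1}.
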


\begin{rmk}
 The symbol $\nabla\Delta^{-1}$ means
 $$
 \nabla\Delta^{-1}f=\int_{\mathbb{R}^d}\nabla_{x}G(x-y)f(y)dy,
 $$
 where $G(x,y)$ is a solution to $\Delta_xG(x,y)=\delta(x-y)$ with
 $x,y\in\mathbb{R}^N$. In the periodic setting, the regularity assumption
 on $\textbf{e}_{0}$ can be removed.
\end{rmk}

\begin{rmk}
In the proof of Theorem \ref{thm1.1}, different from that in
\cite{FXZ}, ``Shizuta-Kawashima" skew-symmetry condition developed
for general hyperbolic systems of balance laws \cite{KY,Y2} is used,
which helps us avoid differentiating the system with respect to
time-variable $t$ and the proof is shortened. Thanks to the
isentropic Euler-Poisson equations also including isentropic Euler
equations \cite{CG}, the concrete information of skew-symmetry
matrix $K(\xi)$ is well known (unknown for general systems) which is
very effective to estimate the coupled electron-field
$\nabla\phi^{\epsilon}$, see (\ref{M-E34}).
\end{rmk}
\begin{thm}Let the assumptions of Theorem \ref{thm1.1} be fulfilled.
Let $(m^{\epsilon},\mathbf{v}^{\epsilon},\\ \nabla\phi^{\epsilon})$
denote the global solution to (\ref{M-E9})-(\ref{M-E10}), then
$(m^{\epsilon},\mathcal{Q}\mathbf{v}^{\epsilon},\nabla\phi^{\epsilon})$
tends to zero in
$L^{1}(\mathbb{R}^{+};B^{N/p}_{p,1}(\mathbb{R}^{N}))$ and
$\mathcal{P}\mathbf{v}^{\epsilon}$ converges in
$L^{\infty}(\mathbb{R}^{+};B^{N/p}_{p,1}(\mathbb{R}^{N}))\cap
L^{1}(\mathbb{R}^{+};B^{N/p}_{p,1}(\mathbb{R}^{N}))$ towards the
solution $\mathbf{u}\in (L^{\infty}\cap
L^{1})(\mathbb{R}^{+};B^{\sigma}_{2,1}(\mathbb{R}^{N}))$ to the
incompressible Euler equations (\ref{M-E4}), as
$\epsilon\rightarrow0$. Here $2\leq p\leq\infty$, $\mathcal{P}$
stands for the Leray projector on solenoidal vector fields and is
defined by $\mathcal{P}=I-\mathcal{Q}$ with
$\mathcal{Q}=\nabla\Delta^{-1}\mathrm{div}$.
\end{thm}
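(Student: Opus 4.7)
The plan is to use the Helmholtz decomposition $\mathbf{v}^\epsilon = \mathcal{P}\mathbf{v}^\epsilon + \mathcal{Q}\mathbf{v}^\epsilon$ to separate the ``fast'' compressible/electrostatic component from the ``slow'' divergence-free component, treating the first by dispersion and the second by a classical transport-energy argument. Setting $d^\epsilon = \Lambda^{-1}\mathrm{div}\,\mathbf{v}^\epsilon$ (so that $\mathcal{Q}\mathbf{v}^\epsilon = -\nabla\Lambda^{-1}d^\epsilon$) and applying $\Lambda^{-1}\mathrm{div}$ to the momentum equation in (\ref{M-E9}), together with the Poisson relation $\Delta\phi^\epsilon = \epsilon^{-1}h(\epsilon m^\epsilon)$ and the Taylor expansion $h(\epsilon m^\epsilon) = h'(0)\epsilon m^\epsilon + O((\epsilon m^\epsilon)^2)$, produces exactly the damped modified wave system for $(m^\epsilon,d^\epsilon)$ displayed before the theorem. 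The right-hand sides $F,G$ collect the quadratic convective terms plus the $O(\epsilon\|m^\epsilon\|^2)$ remainder from the Poisson term; by Theorem \ref{thm1.1} (whose exponential-in-time decay furnishes the $L^1_t$-bound for free) they are uniformly bounded in $L^1(\mathbb{R}^+;B^\sigma_{2,1})$ with respect to $\epsilon$.

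The next step is to apply the frequency-localized Strichartz-type estimate announced in the introduction and built in Section 5 from the semigroup of the modified wave operator. Combined with Duhamel, the uniform bounds on $(F,G)$, and the initial-data bound on $\|(\tfrac{n_{0}-\bar n}{\epsilon},\mathbf{v}_0,\tfrac{\mathbf{e}_0}{\epsilon})\|_{B^\sigma_{2,1}}$, this yields
\begin{equation*}
\|(m^\epsilon, d^\epsilon)\|_{L^1(\mathbb{R}^+;\,B^{N/p}_{p,1})} \le C\,\epsilon^{\alpha}\qquad\text{for some }\alpha > 0,\ 2\le p\le\infty,
\end{equation*}
which gives the stated convergence to zero of $m^\epsilon$ and of $\mathcal{Q}\mathbf{v}^\epsilon = -\nabla\Lambda^{-1}d^\epsilon$. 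For $\nabla\phi^\epsilon = \epsilon^{-1}\nabla\Delta^{-1}h(\epsilon m^\epsilon)$, the same bound together with the Besov composition estimate (using $h(0)=0$) delivers the analogous conclusion.

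For the divergence-free part, I apply $\mathcal{P}$ to the momentum equation in (\ref{M-E9}); the singular gradient terms $\epsilon^{-1}\nabla m^\epsilon$ and $\epsilon^{-1}\nabla\phi^\epsilon$ are annihilated, leaving
\begin{equation*}
(\mathcal{P}\mathbf{v}^\epsilon)_t + \mathcal{P}(\mathbf{v}^\epsilon\cdot\nabla\mathbf{v}^\epsilon) + \mathcal{P}\mathbf{v}^\epsilon = -\tfrac{\gamma-1}{2}\mathcal{P}(m^\epsilon\nabla m^\epsilon).
\end{equation*}
Setting $\mathbf{w}^\epsilon := \mathcal{P}\mathbf{v}^\epsilon - \mathbf{u}$ and subtracting (\ref{M-E4}), the difference satisfies a damped transport equation with source
\begin{equation*}
R^\epsilon = -\mathcal{P}(\mathcal{Q}\mathbf{v}^\epsilon\cdot\nabla\mathbf{v}^\epsilon) - \mathcal{P}(\mathcal{P}\mathbf{v}^\epsilon\cdot\nabla\mathcal{Q}\mathbf{v}^\epsilon) - \tfrac{\gamma-1}{2}\mathcal{P}(m^\epsilon\nabla m^\epsilon),
\end{equation*}
each piece of which tends to zero in $L^1(\mathbb{R}^+;B^{N/p}_{p,1})$ by the Strichartz bounds above combined with standard Besov product estimates in the critical framework. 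The Danchin-type transport estimate, supplied with the uniform $L^1_tB^\sigma_{2,1}$ control of $\mathbf{u}$ and of $\mathcal{P}\mathbf{v}^\epsilon$, followed by Gr\"onwall, then yields $\mathbf{w}^\epsilon\to 0$ in both $L^\infty(\mathbb{R}^+;B^{N/p}_{p,1})$ and $L^1(\mathbb{R}^+;B^{N/p}_{p,1})$, since $\mathbf{w}^\epsilon|_{t=0}=\mathcal{P}\mathbf{v}_0-u_0=0$.

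The main obstacle is the Strichartz step. The modified acoustic operator differs from the usual half-wave operator by the damping term and, more critically, by the zeroth-order low-frequency term $\epsilon^{-1}h'(0)\Lambda^{-1}m^\epsilon$ coming from the electron-field coupling. As a consequence, the dispersion relation is $\tfrac{1}{\epsilon}\sqrt{\bar\psi^2|\xi|^2+\bar\psi h'(0)}$ rather than $\bar\psi|\xi|/\epsilon$, so the oscillation rate remains $\Omega(\epsilon^{-1})$ \emph{uniformly in $\xi$}, including at $\xi=0$. The spectral/semigroup analysis of Section 5 is designed precisely to exploit this uniform-in-frequency lower bound to extract an $\epsilon^\alpha$ gain dyadic-block by dyadic-block; once this refined dispersive ingredient is in hand, the remainder of the argument is a robust combination of the Helmholtz splitting, Besov product estimates, and Gr\"onwall closure.
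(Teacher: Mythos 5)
Your architecture is the same as the paper's (Helmholtz splitting, the damped acoustic system for $(m^{\epsilon},d^{\epsilon})$, semigroup/dispersive analysis plus $TT^{*}$, then a damped transport equation for $w=\mathcal{P}\mathbf{v}^{\epsilon}-\mathbf{u}$ closed by Gronwall), but the central quantitative step is asserted rather than proved, and it is exactly where the work lies. The frequency-localized Strichartz estimate (\ref{M-E66}) does \emph{not} directly yield $\|(m^{\epsilon},d^{\epsilon})\|_{L^{1}(\mathbb{R}^{+};B^{N/p}_{p,1})}\leq C\epsilon^{\alpha}$. First, the dispersive estimates of Lemmas \ref{L3.1}--\ref{L3.2} are only valid on a dyadic block $2^{k}$ under the smallness condition $2^{k}\epsilon\leq 1$, so the gain $\epsilon^{\frac{p-2}{4p}}$ is unavailable for frequencies above $\sim\epsilon^{-1}$; these must be handled separately by the uniform $B^{\sigma}_{2,1}$ energy bound and the extra room $\sigma-N/2=1$, as in (\ref{M-E70}). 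Second, at low frequencies the dispersion degenerates: your remark that the oscillation rate is $\Omega(\epsilon^{-1})$ ``uniformly in $\xi$, including at $\xi=0$'' misses the point that stationary phase uses the curvature/group velocity of $\lambda(\xi)=\sqrt{\bar{\psi}^{2}|\xi|^{2}+\bar{\psi}h'(0)-\epsilon^{2}/4}$, which vanishes as $\xi\to0$ (this is why the bounds carry the factor $\min\{2^{k}/(2^{k}+1),\tau^{-1/2}\}$ and the prefactor $2^{\frac{(N-4)k(p-2)}{2p}}$ for $k<0$). Together with the low-frequency source bound $\|\dot{\Delta}_{k}(F,G)\|_{L^{1}L^{2}}\lesssim 2^{-k}$, the block estimates are simply not summable in the Besov norm near $\xi=0$ for general $N,p$. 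The paper resolves both ends by the four-zone splitting $P_{\leq 1/J}$, $P_{1/J\leq\cdot\leq1}$, $P_{1\leq\cdot\leq M}$, $P_{\geq M}$, trading the lack of dispersion at the extremes against Bernstein and the uniform exponential-decay energy estimate, and then optimizing $J,M$ as negative powers of $\epsilon$ to still extract a positive power (Proposition \ref{P-5.2}, (\ref{M-E68})--(\ref{M-E78})). Without some version of this cutoff-and-optimize argument your ``$\leq C\epsilon^{\alpha}$'' is unjustified.

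The same low-frequency issue invalidates your one-line treatment of $\nabla\phi^{\epsilon}$: since $\nabla\phi^{\epsilon}\approx h'(0)\nabla\Delta^{-1}m^{\epsilon}$ up to quadratic corrections, the operator $\nabla\Delta^{-1}$ loses a power of $|\xi|$ at low frequencies, which is why the paper needs the separate estimates (\ref{M-E76})--(\ref{M-E78}) with the modified threshold $2-\frac{(p-2)(N-4)}{2p}$ rather than a composition estimate applied to the bound already obtained for $m^{\epsilon}$. Two minor further points: $F,G$ contain one derivative of $(m^{\epsilon},\mathbf{v}^{\epsilon})$, so they are uniformly bounded in $L^{1}_{t}B^{N/2}_{2,1}=L^{1}_{t}B^{\sigma-1}_{2,1}$, not $L^{1}_{t}B^{\sigma}_{2,1}$ as you wrote; and the term $\mathcal{P}(m^{\epsilon}\nabla m^{\epsilon})$ in your $R^{\epsilon}$ vanishes identically, since $m^{\epsilon}\nabla m^{\epsilon}$ is an exact gradient. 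The final step is also organized slightly differently in the paper: the Gronwall argument for $w$ is run at the $L^{2}$ level and the $B^{N/p}_{p,1}$ statement is then recovered by interpolation with the uniform $B^{\sigma}_{2,1}$ bounds (Proposition \ref{P-5.3}), which avoids having to redo critical Besov product estimates for the error equation; your direct critical-space transport estimate could be made to work, but it is not needed.
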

\begin{rmk}
The general convergence statements are give by Proposition
\ref{P-5.2} and Proposition \ref{P-5.3}, where the speed of
convergence may be characterized in terms of power of $\epsilon$.
\end{rmk}
\begin{rmk}
Let us mention that the approach developed by this paper can also be
applied to study the low-Mach-number limits of the compressible
Euler equations with damping for the perfect gas flow. Therefore,
this work can be regarded as a supplement to the theory of
asymptotic limits for hyperbolic problems.
\end{rmk}
\begin{rmk}
Prescribing \textit{ill-prepared} initial data and periodic boundary
conditions preclude from using dispersive properties in order to
pass to the limit in (\ref{M-E9}). Indeed, there is no chance that
the acoustic waves go at infinity. Hence, there could be resonances
which may hinder the convergence to the incompressible Euler
equations. This will be shown in a forthcoming paper.
\end{rmk}

Our paper is organized as follows. In Section 2, we introduce Besov
spaces and their properties. In Section 3, we give some remarks on
the hyperbolic symmetrization and recall a local existence result of
classical solutions. In Section 4, we derive the \textit{a-priori}
estimate which is used to achieve the global existence of uniform
classical solutions. For clarity, Section 5 is divided into two
parts. Based on the detailed analysis of the equation of acoustics,
we first establish a new frequency-localization Strichartz-type
estimate. Then using this estimate, we perform the
zero-electron-mass limit in a refined way.

Finally, some efforts on other kinds of asymptotic limits (such as
relaxation-time limit and quasineutral limit) of classical solutions
to the Euler-Poisson equations (\ref{M-E2})-(\ref{M-E3}) should be
mentioned, the interested reader is referred to \cite{W,XY,Y1} and
the literature quoted therein.

\section{Preliminary}
Throughout this paper, $C$ is a uniform positive constant
independent of $\epsilon$. $f\approx g$ means that $f\leq Cg$ and
$g\leq Cf$. We denote by $L^{\rho}(0,T;X)$, $\mathcal{C}([0,T],X)$
(resp., $\mathcal{C}^{1}([0,T],X)$) the space of $\rho$-power
integrable and continuation (resp., continuously differentiable)
functions on $[0,T]$ with values in a Banach space $X$,
respectively. In the case $T=+\infty$, we sometimes label
$L^{\rho}(\mathbb{R}^{+};X)$, $\mathcal{C}(\mathbb{R}^{+}, X)$, etc.
For brevity, we use the notation
$\|(a,b,c)\|_{X}:=\|a\|_{X}+\|b\|_{X}+\|c\|_{X}$, where $a,b,c\in
X$. All functional spaces of the present paper are considered in
$\mathbb{R}^{N}$, so we may omit the space dependence for
simplicity.

In this section, we review briefly the Littlewood--Paley
decomposition theory and the characterization of Besov spaces; see
also, \textit{e.g.},~\cite{Danchin} or \cite{FXZ}.

Let $\mathcal{S}$ be the Schwarz class. ($\varphi, \chi)$ is a
couple of smooth functions valued in [0,1] such that $\varphi$ is
supported in the shell
$\textbf{C}(0,\frac{3}{4},\frac{8}{3})=\{\xi\in\mathbb{R}^{N}|\frac{3}{4}\leq|\xi|\leq\frac{8}{3}\}$,
$\chi$ is supported in the ball $\textbf{B}(0,\frac{4}{3})=
\{\xi\in\mathbb{R}^{N}||\xi|\leq\frac{4}{3}\}$ satisfying
$$
\chi(\xi)+\sum_{q\in\mathbb{N}}\varphi(2^{-q}\xi)=1,\ \ \ \ q\in
\mathbb{N},\ \ \xi\in\mathbb{R}^{N}
$$
and
$$
\sum_{k\in\mathbb{Z}}\varphi(2^{-k}\xi)=1,\ \ \ \ k\in \mathbb{Z},\
\ \xi\in\mathbb{R}^{N}\setminus\{0\}.
$$
For $f\in\mathcal{S'}$(denote the set of temperate distributions
which is the dual of $\mathcal{S}$), one can define the Fourier
dyadic blocks as follows:
$$
\Delta_{-1}f:=\chi(D)f=\mathcal{F}^{-1}(\chi(\xi)\mathcal{F}f),\ \ \
\Delta_{q}f:=0 \ \  \mbox{for}\ \  q\leq-2;
$$
$$
\Delta_{q}f:=\varphi(2^{-q}D)f=\mathcal{F}^{-1}(\varphi(2^{-q}|\xi|)\mathcal{F}f)\
\  \mbox{for}\ \  q\geq0;
$$
$$
\dot{\Delta}_{k}f:=\varphi(2^{-k}D)f=\mathcal{F}^{-1}(\varphi(2^{-k}|\xi|)\mathcal{F}f)\
\  \mbox{for}\ \  k\in\mathbb{Z},
$$
where $\mathcal{F}f$, $\mathcal{F}^{-1}f$ represent the Fourier
transform and the inverse Fourier transform on $f$, respectively.
The nonhomogeneous Littlewood--Paley decomposition is
$$
f=\sum_{q \geq-1}\Delta_{q}f \ \ \ \mbox{in}\ \ \ \mathcal{S'}.
$$
Define the low-frequency cut-off by
$$
S_{q}f:=\sum_{p\leq q-1}\Delta_{p}f.
$$
The above Littlewood--Paley decomposition is almost orthogonal in
$L^2$.

\begin{prop}\label{prop2.1}
For any $f, g\in\mathcal{S'}$, the following properties hold:
$$
\Delta_{p}\Delta_{q}f\equiv 0 \ \ \ \mbox{if}\ \ \ |p-q|\geq 2,
$$
$$
\Delta_{q}(S_{p-1}f\Delta_{p}g)\equiv 0\ \ \ \mbox{if}\ \ \
|p-q|\geq 5.
$$
\end{prop}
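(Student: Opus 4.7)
The plan is to prove both vanishing identities by passing to the Fourier side, where each dyadic block $\Delta_q$ is a Fourier multiplier with compactly supported symbol $\varphi(2^{-q}\cdot)$ (or $\chi$ when $q=-1$). Spatial products become convolutions of frequency supports, so everything reduces to elementary disjointness calculations for annuli in $\mathbb{R}^N$, using only the prescribed radii $3/4$, $8/3$, and $4/3$.

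For the first identity, I will write $\widehat{\Delta_p\Delta_q f}(\xi)=\varphi(2^{-p}\xi)\varphi(2^{-q}\xi)\hat f(\xi)$ and use that $\mathrm{supp}\,\varphi(2^{-q}\cdot)\subset\{(3/4)2^q\leq |\xi|\leq (8/3)2^q\}$. Assuming without loss of generality $q\geq p+2$ with $p\geq 0$, the lower edge $(3/4)2^q\geq 3\cdot 2^p$ exceeds the upper edge $(8/3)2^p$ of the $p$-annulus, so the two symbols have disjoint supports and their product vanishes identically. The edge case $p=-1$, $q\geq 1$ is the same with $\chi$ in place of $\varphi(2^{-p}\cdot)$: $(3/4)2^q\geq 3/2>4/3=\sup|\xi|$ on $\mathrm{supp}\,\chi$.

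For the second identity, I will invoke the telescoping identity $S_{p-1}=\chi(2^{-(p-1)}D)$ obtained from the partition of unity, so that $\widehat{S_{p-1}f}$ is supported in $\{|\xi|\leq (4/3)2^{p-1}=(2/3)2^p\}$. Combined with $\widehat{\Delta_p g}$ supported in $\{(3/4)2^p\leq |\xi|\leq (8/3)2^p\}$, the convolution $\widehat{S_{p-1}f\,\Delta_p g}$ lies in the Minkowski-sum annulus $\{(1/12)2^p\leq |\xi|\leq (10/3)2^p\}$. Requiring $\mathrm{supp}\,\varphi(2^{-q}\cdot)$ to be disjoint from this set forces either $(8/3)2^q<(1/12)2^p$, i.e.\ $q\leq p-5$, or $(3/4)2^q>(10/3)2^p$, i.e.\ $q\geq p+3$. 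Both branches are implied by $|p-q|\geq 5$, which completes the argument.

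The only real obstacle is numerical bookkeeping. The support calculation is slightly asymmetric --- the upper threshold reads $q\geq p+3$, while the lower threshold reads $q\leq p-5$ --- so one must verify that the symmetric hypothesis $|p-q|\geq 5$ really covers both branches, and that the prescribed radii $3/4,8/3,4/3$ yield precisely the stated thresholds $2$ and $5$ rather than neighbouring integers. This is routine arithmetic but needs to be handled with care.
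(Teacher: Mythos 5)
Your overall strategy is the standard one (the paper itself does not prove this proposition; it is quoted from the Littlewood--Paley literature), namely reduction to disjointness of Fourier supports, and most of your computation is right: the first identity is fully correct, including the case $p=-1$, and in the second identity the upper branch $(3/4)2^{q}>(10/3)2^{p}\Leftrightarrow q\geq p+3$ and the Minkowski-sum annulus $\{(1/12)2^{p}\leq|\xi|\leq(10/3)2^{p}\}$ are both correct (with the harmless remark that for $p\leq 0$ one has $S_{p-1}f=0$, so only $p\geq1$ matters).

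There is, however, a genuine off-by-one gap exactly at the borderline case that the stated threshold $5$ must cover. The strict inequality $(8/3)2^{q}<(1/12)2^{p}$ is equivalent to $2^{q-p}<2^{-5}$, i.e.\ $q\leq p-6$, not $q\leq p-5$ as you claim: for $q=p-5$ one has $(8/3)2^{q}=(1/12)2^{p}$, so with the paper's conventions (closed annulus for $\mathrm{supp}\,\varphi$, closed ball for $\mathrm{supp}\,\chi$) the support of $\varphi(2^{-q}\cdot)$ and the closed annulus containing $\mathrm{supp}\,\mathcal{F}(S_{p-1}f\,\Delta_{p}g)$ are not disjoint --- they touch along the sphere $|\xi|=(1/12)2^{p}$ --- and the ``disjoint supports, hence zero'' step fails as written precisely when $|p-q|=5$ with $q<p$. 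The conclusion is still true, and you can close the gap in either of two standard ways: (i) observe that, since $\varphi$ is smooth and vanishes identically outside its support, it vanishes together with all its derivatives on the outer sphere $|\xi|=(8/3)2^{q}$; as $\mathcal{F}(S_{p-1}f\,\Delta_{p}g)$ is a compactly supported (hence finite-order) distribution supported in $\{|\xi|\geq(1/12)2^{p}\}$, pairing it with $\varphi(2^{-q}\cdot)\phi$, which vanishes to infinite order on that support, gives zero; or (ii) use the usual construction (as in Bahouri--Chemin--Danchin) in which $\chi$ is supported compactly inside the open ball $\{|\xi|<4/3\}$, so the sum annulus has inner radius strictly larger than $(1/12)2^{p}$ and the disjointness at $q=p-5$ becomes strict. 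Since you explicitly flagged the threshold bookkeeping as the delicate point, this is the place where your write-up needs the extra argument.
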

\unskip

Based on the above Littlewood--Paley decomposition, we introduce the
explicit definitions of nonhomogeneous Besov spaces.

\begin{defn}\label{defn2.1}
Let $1\leq p\leq\infty$ and $s\in \mathbb{R}$. For $1\leq r<\infty$,
the Besov spaces  $B^{s}_{p,r}$ are defined by
$$
f\in B^{s}_{p,r} \Leftrightarrow
\biggl(\sum_{q\geq-1}(2^{qs}\|\Delta_{q}f\|_{L^{p}})^{r}\biggr)^{\frac{1}{r}}<\infty
$$
and $B^{s}_{p,\infty}$ are defined by
$$
f\in B^{s}_{p,\infty} \Leftrightarrow
\sup_{q\geq-1}2^{qs}\|\Delta_{q}f\|_{L^{p}}<\infty.
$$
\end{defn}
Some conclusions as follows will be used in subsequent analysis. The
first one is the classical Bernstein's inequality.

\begin{lem}[Bernstein's inequality]\label{lem2.1}
Let $k\in\mathbf{N}$ and $0<R_{1}<R_{2}$. There exists a constant
$C$ depending only on $R_{1},R_{2}$, and $N$ such that for all
$1\leq a\leq b\leq\infty$ and $f\in L^{a}$, we have
$$
\mathrm{Supp}\ \mathcal{ F}f\subset
\textbf{B}(0,R_{1}\lambda)\Rightarrow\sup_{|\alpha|=k}\|\partial^{\alpha}f\|_{L^{b}}\leq
C^{k+1}\lambda^{k+N(\frac{1}{a}-\frac{1}{b})}\|f\|_{L^{a}},
$$
$$
\mathrm{Supp}\ \mathcal{ F}f\subset
\textbf{C}(0,R_{1}\lambda,R_{2}\lambda)\Rightarrow
C^{-k-1}\lambda^{k}\|f\|_{L^{a}}\leq
\sup_{|\alpha|=k}\|\partial^{\alpha}f\|_{L^{a}}\leq
C^{k+1}\lambda^{k}\|f\|_{L^{a}},
$$
where $\mathcal{F}f$(or
$\widehat{f}=\int_{\mathbb{R}^{N}}f(x)\exp(-ix\cdot\xi)dx$)
represents the Fourier transform on $f$.
\end{lem}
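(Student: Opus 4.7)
The plan is to reduce both inequalities to Young's convolution inequality applied to a representation of $f$ (or of its derivatives) as a convolution against the inverse Fourier transform of a smooth compactly supported symbol, with the $\lambda$-dependence extracted by elementary scaling.

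For the direct inequality, I would fix once and for all a cutoff $\tilde\chi\in C_c^\infty(\mathbb{R}^N)$ with $\tilde\chi\equiv 1$ on $\textbf{B}(0,R_1)$ and set $K:=\mathcal{F}^{-1}\tilde\chi\in\mathcal{S}$. Since $\mathrm{supp}\,\mathcal{F}f\subset\textbf{B}(0,R_1\lambda)$ we have $\mathcal{F}f(\xi)=\tilde\chi(\xi/\lambda)\mathcal{F}f(\xi)$, whence
$$\partial^\alpha f=(\partial^\alpha K_\lambda)*f,\qquad K_\lambda(x):=\lambda^N K(\lambda x).$$
Young's inequality with $1+1/b=1/a+1/c$ yields $\|\partial^\alpha f\|_{L^b}\leq\|\partial^\alpha K_\lambda\|_{L^c}\|f\|_{L^a}$, and a change of variables gives $\|\partial^\alpha K_\lambda\|_{L^c}=\lambda^{k+N(1/a-1/b)}\|\partial^\alpha K\|_{L^c}$. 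The residual factor $\|\partial^\alpha K\|_{L^c}$ is bounded by $C^{k+1}$ with $C=C(R_1,R_2,N)$, the only $k$-dependence arising from the polynomial $(i\xi)^\alpha$ on the fixed support of $\tilde\chi$.

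For the annular estimate, the upper bound is immediate from the first part specialized to $a=b$ (since the annulus sits inside a ball). For the matching lower bound, my plan is to invert the derivatives on the annulus via the multinomial identity $|\xi|^{2k}=\sum_{|\alpha|=k}\binom{k}{\alpha}\xi^{2\alpha}$. I would choose $\tilde\varphi\in C_c^\infty(\mathbb{R}^N\setminus\{0\})$ with $\tilde\varphi\equiv 1$ on $\textbf{C}(0,R_1,R_2)$ and set
$$h_\alpha(\eta):=i^{-k}\binom{k}{\alpha}\frac{\eta^\alpha\tilde\varphi(\eta)}{|\eta|^{2k}},\qquad |\alpha|=k.$$
Each $h_\alpha$ is smooth and compactly supported away from the origin, so $H_\alpha:=\mathcal{F}^{-1}h_\alpha\in\mathcal{S}\subset L^1$. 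A direct computation using $|\alpha|=k$ gives $\sum_{|\alpha|=k}h_\alpha(\eta)(i\eta)^\alpha=\tilde\varphi(\eta)$, so on the rescaled annulus where $\tilde\varphi(\xi/\lambda)\equiv 1$ one has $\sum_{|\alpha|=k}h_\alpha(\xi/\lambda)(i\xi)^\alpha=\lambda^k$, and multiplying by $\mathcal{F}f$ and inverting yields
$$f=\sum_{|\alpha|=k}\lambda^{-k}H_{\alpha,\lambda}*\partial^\alpha f,\qquad H_{\alpha,\lambda}(x):=\lambda^N H_\alpha(\lambda x).$$
Young's inequality together with the scale-invariance $\|H_{\alpha,\lambda}\|_{L^1}=\|H_\alpha\|_{L^1}$ then delivers $\lambda^k\|f\|_{L^a}\leq C^{k+1}\sup_{|\alpha|=k}\|\partial^\alpha f\|_{L^a}$.

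The only mildly delicate point, and the one I expect to require the most care, is the uniform-in-$k$ constant $C^{k+1}$: both $\|\partial^\alpha K\|_{L^c}$ and $\|H_\alpha\|_{L^1}$ must be controlled by $C^{k+1}$ with $C$ depending only on the fixed cutoffs (hence only on $R_1,R_2,N$). This is routine Schwartz-class bookkeeping, since the polynomial factor $(i\xi)^\alpha$ contributes at most $R_2^k$ on the support of the cutoffs while a fixed finite family of derivatives of $\tilde\chi$ and $\tilde\varphi$ produce constants absorbable into the base $C$; beyond this, no idea is needed other than elementary Fourier-analytic estimates.
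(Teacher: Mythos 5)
Your proposal is correct: the convolution-with-a-rescaled-cutoff argument for the ball case and the inversion of $\sum_{|\alpha|=k}\binom{k}{\alpha}\xi^{2\alpha}=|\xi|^{2k}$ on the annulus, combined with Young's inequality and the scaling identity $\|K_\lambda\|_{L^c}=\lambda^{N(1-1/c)}\|K\|_{L^c}$, is precisely the standard proof of Bernstein's inequality, and your bookkeeping for the $C^{k+1}$ constant (polynomial growth $R^k$ of the symbol on the fixed support, a fixed finite number of derivatives of the cutoffs, and the $\binom{k+N-1}{N-1}$ multi-indices all absorbed into the base $C$) closes the only delicate point. The paper states this lemma without proof as a classical fact recalled from the literature, so there is no in-paper argument to compare against; your route is the canonical one.
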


The second one is the embedding properties in Besov spaces.
\begin{lem}\label{lem2.2}
$$B^{s}_{p,r}\hookrightarrow B^{\tilde{s}}_{p,\tilde{r}}\ \ \
\mbox{whenever}\ \ \tilde{s}<s\ \ \mbox{or}\ \ \tilde{s}=s \ \
\mbox{and}\ \ r\leq\tilde{r};$$
$$B^{s}_{p,r}\hookrightarrow B^{s-N(\frac{1}{p}-\frac{1}{\tilde{p}})}_{\tilde{p},r}\ \ \
\mbox{whenever}\ \ \tilde{p}>p;$$
$$B^{d/p}_{p,1}(1\leq p<\infty)\hookrightarrow\mathcal{C}_{0},\ \ \ B^{0}_{\infty,1}\hookrightarrow\mathcal{C}\cap L^{\infty},$$
where $\mathcal{C}_{0}$ is the space of continuous bounded functions
which decay at infinity.
\end{lem}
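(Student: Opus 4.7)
The plan is to handle the three embeddings in the lemma separately, in each case reducing the claim to an elementary statement about the Littlewood--Paley blocks $\Delta_q f$ together with Bernstein's inequality (Lemma \ref{lem2.1}) and monotonicity of $\ell^r$ norms.

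For the first embedding I would write $2^{q\tilde{s}}\|\Delta_q f\|_{L^p} = 2^{q(\tilde{s}-s)} \cdot 2^{qs}\|\Delta_q f\|_{L^p}$. When $\tilde{s}<s$, the geometric factor $2^{q(\tilde{s}-s)}$ lies in $\ell^{\tilde{r}}(\mathbb{Z}_{\geq -1})$, so a discrete Hölder (or Young) inequality gives $\|f\|_{B^{\tilde{s}}_{p,\tilde{r}}} \leq C\,\|f\|_{B^{s}_{p,r}}$ for any $r,\tilde{r}\in [1,\infty]$. When $\tilde{s}=s$ and $r\leq\tilde{r}$, the embedding is exactly the classical $\ell^r\hookrightarrow\ell^{\tilde{r}}$ applied to the sequence $\{2^{qs}\|\Delta_q f\|_{L^p}\}_q$. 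For the second embedding, the key input is Bernstein: since each $\Delta_q f$ has Fourier support in a set of diameter $\sim 2^q$, Lemma \ref{lem2.1} gives $\|\Delta_q f\|_{L^{\tilde{p}}} \leq C\, 2^{qN(1/p - 1/\tilde{p})}\|\Delta_q f\|_{L^p}$. Multiplying by $2^{q(s - N(1/p - 1/\tilde{p}))}$ absorbs the Bernstein factor exactly into $2^{qs}\|\Delta_q f\|_{L^p}$, after which taking the $\ell^r(q)$ norm delivers the claim.

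The third and fourth embeddings rest on Bernstein at $\tilde{p}=\infty$, which yields $\|\Delta_q f\|_{L^\infty} \leq C\, 2^{qN/p}\|\Delta_q f\|_{L^p}$ and hence
\[
\sum_{q\geq -1}\|\Delta_q f\|_{L^\infty} \;\leq\; C\,\|f\|_{B^{N/p}_{p,1}}.
\]
Thus the nonhomogeneous series $f=\sum_q \Delta_q f$ converges normally in $L^\infty$; because each block $\Delta_q f$ is smooth (Fourier-supported on a compact set), the uniform limit is continuous and bounded. This already gives $B^0_{\infty,1}\hookrightarrow \mathcal{C}\cap L^\infty$ and the continuous-and-bounded part of $B^{N/p}_{p,1}\hookrightarrow \mathcal{C}_0$. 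The main (and essentially only) subtlety, which I would treat most carefully, is the decay at infinity for the $\mathcal{C}_0$ statement when $p<\infty$: I would argue by density, using that for $p<\infty$ the Schwartz class $\mathcal{S}$ is dense in $B^{N/p}_{p,1}$, so any $f$ is a Besov-limit of Schwartz functions $f_n\in\mathcal{C}_0$; the embedding into $L^\infty$ established just above makes this convergence uniform, and since $\mathcal{C}_0$ is closed in the sup norm the limit $f$ belongs to $\mathcal{C}_0$.
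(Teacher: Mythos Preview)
Your argument is correct and follows the standard textbook route: $\ell^r$ monotonicity together with a discrete H\"older inequality for the index shifts in the first embedding, Bernstein (Lemma~\ref{lem2.1}) for the change of integrability index in the second, and Bernstein at $\tilde p=\infty$ plus density of $\mathcal{S}$ in $B^{N/p}_{p,1}$ for $p<\infty$ to obtain the $\mathcal{C}_0$ statement. There is nothing to compare against, however: the paper does not prove Lemma~\ref{lem2.2}. It is listed among the preliminaries in Section~2 as a known fact about Besov spaces, with only a blanket reference to the Littlewood--Paley material in \cite{Danchin,FXZ}. Your write-up simply supplies the standard details the paper chose to omit.
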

The three one is the compactness result for Besov spaces.
\begin{prop}\label{prop2.2}
Let $1\leq p,r\leq \infty,\ s\in \mathbb{R}$, and $\epsilon>0$. For
all $\phi\in C_{c}^{\infty}$, the map $f\mapsto\phi f$ is compact
from $B^{s+\epsilon}_{p,r}$ to $B^{s}_{p,r}$.
\end{prop}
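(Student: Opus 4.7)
The plan is to combine the Littlewood--Paley characterization of Besov spaces with the Fr\'echet--Kolmogorov compactness criterion in $L^{p}$. Fix a bounded sequence $(f_{n})\subset B^{s+\epsilon}_{p,r}$ and set $g_{n}:=\phi f_{n}$. First I would invoke a standard product estimate (multiplication by a Schwartz function is bounded on every $B^{t}_{p,r}$, via a paraproduct decomposition) to conclude that $(g_{n})$ is still bounded in $B^{s+\epsilon}_{p,r}$; by construction each $g_{n}$ is supported in the fixed compact set $K=\mathrm{supp}\,\phi$.

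Next I would extract from the $\epsilon$-gap in regularity a uniform high-frequency tail estimate
$$
\Big\|\sum_{q\geq Q}\Delta_{q}g_{n}\Big\|_{B^{s}_{p,r}}\leq 2^{-Q\epsilon}\,\|g_{n}\|_{B^{s+\epsilon}_{p,r}}\leq C\,2^{-Q\epsilon},
$$
which is immediate from Definition~\ref{defn2.1} since each dyadic block $2^{qs}\|\Delta_{q}g_{n}\|_{L^{p}}$ carries an extra factor $2^{-q\epsilon}$ when compared with $2^{q(s+\epsilon)}\|\Delta_{q}g_{n}\|_{L^{p}}$. This reduces the problem to extracting, for each fixed $Q$, a convergent subsequence of the low-frequency truncation $S_{Q}g_{n}$ in $B^{s}_{p,r}$.

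The heart of the argument is to show that for each fixed $q\geq-1$ the sequence $(\Delta_{q}g_{n})_{n}$ is relatively compact in $L^{p}(\mathbb{R}^{N})$. Writing $\Delta_{q}g_{n}=h_{q}\ast g_{n}$ with $h_{q}$ a Schwartz function, Bernstein's inequality (Lemma~\ref{lem2.1}) yields uniform bounds on all spatial derivatives of $\Delta_{q}g_{n}$, which gives the equicontinuity hypothesis of Fr\'echet--Kolmogorov. Since every $g_{n}$ is supported in the same compact set $K$ and $h_{q}$ is rapidly decreasing, $\Delta_{q}g_{n}(x)$ decays uniformly in $n$ as $|x|\to\infty$, providing the required tightness. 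Hence $(\Delta_{q}g_{n})_{n}$ is precompact in $L^{p}$.

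A diagonal extraction then produces a subsequence, still denoted $g_{n}$, along which $\Delta_{q}g_{n}$ converges in $L^{p}$ for every $q\geq-1$; for each fixed $Q$ this forces $S_{Q}g_{n}$ to be Cauchy in $B^{s}_{p,r}$. Combined with the uniform tail bound above, choosing $Q$ large to absorb the high frequencies and then $n,m$ large to control the remaining finite sum shows that $(g_{n})$ is Cauchy in $B^{s}_{p,r}$, which proves the compactness. The step I expect to require the most care is the joint use of equicontinuity and tightness at each dyadic scale $q$, and the mild subtlety in the tail estimate when $r=\infty$, where the sum is replaced by a supremum but the factor $2^{-Q\epsilon}$ still closes the bound.
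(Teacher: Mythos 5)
Your proof is correct, and there is in fact nothing in the paper to compare it against: Proposition \ref{prop2.2} is stated in the preliminaries as a standard fact from the Besov-space literature (the references \cite{Danchin,FXZ} quoted there) and the paper gives no proof of it. Your route --- boundedness of $f\mapsto\phi f$ on every $B^{t}_{p,r}$ via paraproducts, the uniform high-frequency tail of size $O(2^{-Q\epsilon})$ coming from the regularity gap, precompactness of each fixed block $(\Delta_{q}(\phi f_{n}))_{n}$ in $L^{p}$ from equicontinuity (Bernstein, Lemma \ref{lem2.1}) plus tightness (fixed compact support convolved with a rapidly decreasing kernel), and a diagonal extraction --- is precisely the standard textbook argument, and it closes. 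Three minor points to tidy up, none affecting the structure: the tail bound should carry a harmless constant from the almost-orthogonality $\Delta_{q'}\Delta_{q}=0$ for $|q-q'|\geq2$ when you apply the Besov norm to the sum $\sum_{q\geq Q}\Delta_{q}g_{n}$; for $p=\infty$ the Fr\'echet--Kolmogorov criterion should be replaced by Arzel\`a--Ascoli combined with the uniform decay at infinity that you already establish; and when $s+\epsilon$ is low enough that $g_{n}=\phi f_{n}$ is only a tempered distribution, the decay of $\Delta_{q}g_{n}(x)=\langle g_{n},h_{q}(x-\cdot)\rangle$ as $|x|\to\infty$, uniform in $n$, should be justified by duality --- pairing $g_{n}$, which is supported in $\mathrm{supp}\,\phi$ and bounded in $B^{s+\epsilon-N/p}_{\infty,\infty}$ by Lemma \ref{lem2.2}, against the rapidly decreasing kernel cut off near $\mathrm{supp}\,\phi$ --- rather than by a literal convolution integral.
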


The last one is a continuity result for compositions.

\begin{prop}\label{prop2.3}
Let $1\leq p,r\leq \infty$,\ and $I$ be an open interval of
$\mathbb{R}$. Let $s>0$ and let $n$ be the smallest integer such
that $n\geq s$. Let $F:I\rightarrow\mathbb{R}$ satisfy $F(0)=0$ and
$F'\in W^{n,\infty}(I;\mathbb{R}).$ Assume that $v\in B^{s}_{p,r}$
takes values in $J\subset\subset I$. Then $F(v)\in B^{s}_{p,r}$ and
there exists a constant $C$ depending only on $s,I,J$, and $N$ such
that
$$
\|F(v)\|_{B^{s}_{p,r}}\leq
C(1+\|v\|_{L^{\infty}})^{n}\|F'\|_{W^{n,\infty}(I)}\|v\|_{B^{s}_{p,r}}.
$$
\end{prop}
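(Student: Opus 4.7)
The plan is to prove this composition estimate via the classical paralinearization-style telescoping argument. Starting from the Littlewood--Paley decomposition with the convention $S_{-1}v = 0$, one writes
$$
F(v) \;=\; \sum_{q \geq -1} \bigl[F(S_{q+1}v) - F(S_q v)\bigr] \;=\; \sum_{q \geq -1} m_q\, \Delta_q v,
$$
where $m_q := \int_0^1 F'\bigl(S_q v + \tau\, \Delta_q v\bigr)\, d\tau$. The hypothesis $J \subset\subset I$, together with the uniform $L^\infty$-boundedness of the dyadic blocks $S_q v$ and $\Delta_q v$, ensures that the argument of $F'$ stays in a fixed compact subinterval of $I$, so $m_q$ and all its derivatives up to order $n$ are well-defined and bounded.

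Next, I would apply $\Delta_j$ and split the sum over $q$ into a \emph{diagonal} part $|q-j| \leq N_0$ and an \emph{off-diagonal} part $|q-j| > N_0$, with $N_0$ a fixed integer depending only on the supports of $\varphi$ and $\chi$. For the diagonal part, the elementary bound $\|\Delta_j(m_q \Delta_q v)\|_{L^p} \leq C \|F'\|_{L^\infty(J)} \|\Delta_q v\|_{L^p}$ suffices. For the off-diagonal part, the Fourier support of $\Delta_j(m_q \Delta_q v)$ is localized near $2^j \gg 2^q$, so Bernstein's inequality (Lemma \ref{lem2.1}) gives
$$
\|\Delta_j(m_q \Delta_q v)\|_{L^p} \;\leq\; C\, 2^{-jn} \bigl\|\partial^n(m_q \Delta_q v)\bigr\|_{L^p}.
$$
Leibniz's rule distributes the $n$ derivatives, and the Fa\`{a} di Bruno formula for $\partial^k m_q$ yields
$$
\|\partial^k m_q\|_{L^\infty} \;\leq\; C\, (1+\|v\|_{L^\infty})^k\, \|F'\|_{W^{k,\infty}(I)}\, 2^{qk},
$$
because each derivative of $S_q v + \tau \Delta_q v$ costs a factor of $2^q$ via Bernstein and is measured pointwise by $\|v\|_{L^\infty}$.

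To close the argument, I weight both contributions by $2^{js}$ and take the $\ell^r(\mathbb{Z})$-norm in $j$. The resulting convolution kernel $2^{(j-q)s}\mathbf{1}_{q \geq j} + 2^{(j-q)(s-n)}\mathbf{1}_{q < j}$ is summable in $q-j$ because $0 < s \leq n$, so a Young-type inequality in $\ell^r(\mathbb{Z})$ collapses the double sum to $\|v\|_{B^s_{p,r}}$ and produces the claimed estimate with constant $C(1+\|v\|_{L^\infty})^n \|F'\|_{W^{n,\infty}(I)}$.

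The main technical obstacle is the Fa\`{a} di Bruno bookkeeping: one must verify that each term appearing in $\partial^n(m_q \Delta_q v)$ contributes at most the power $(1+\|v\|_{L^\infty})^n$ and at most one factor of $\|F'\|_{W^{n,\infty}(I)}$, with the remaining $2^q$-powers matching exactly the $2^{-jn}$ gained from Bernstein so that the net kernel in $q-j$ is summable. Care is also needed at the endpoint $s = n$, where one may need to apply Bernstein with $n+1$ derivatives on $\Delta_q v$ (which is harmless because $\Delta_q v$ is spectrally localized) rather than transferring derivatives onto $F'$. These technicalities are combinatorial but routine, and are worked out in detail in \cite{Danchin}.
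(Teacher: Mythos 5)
The paper itself offers no proof of Proposition \ref{prop2.3}: it is quoted as a standard composition estimate from the references indicated at the beginning of Section 2 (\cite{Danchin}, \cite{FXZ}), so there is no in-paper argument to compare with. Your Meyer-type first linearization $F(v)=\sum_{q}m_q\Delta_q v$ with $m_q=\int_0^1F'(S_qv+\tau\Delta_qv)\,d\tau$ is exactly the classical route used in those references, and for $s\notin\mathbb{N}$ (so that $s<n$ strictly) your estimates do close: the kernel $2^{(j-q)s}\mathbf{1}_{\{q\ge j-N_0\}}+2^{(j-q)(s-n)}\mathbf{1}_{\{q<j-N_0\}}$ is then summable in $q-j$ and Young's inequality in $\ell^r$ yields the stated bound.

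There is, however, a genuine gap at the endpoint $s=n$, i.e.\ integer $s$, which the statement allows and which actually occurs in this paper (for $N$ even, $\sigma=1+N/2\in\mathbb{N}$). In that case the off-diagonal kernel $2^{(j-q)(s-n)}\equiv 1$ for $q<j$ is not summable (termwise one only recovers $B^{n}_{p,\infty}$), and your proposed repair --- ``apply Bernstein with $n+1$ derivatives on $\Delta_qv$ rather than transferring derivatives onto $F'$'' --- is not available: the reverse Bernstein step gains $2^{-j(n+1)}$ only when the $n+1$ derivatives are applied to the whole product $m_q\Delta_qv$, and Leibniz then unavoidably produces the term $(\partial^{n+1}m_q)\,\Delta_qv$, whose control requires $F'\in W^{n+1,\infty}$, one derivative more than hypothesized; equivalently, the smallness comes from the high-frequency part of $m_q$, and with only $n$ bounded derivatives of $F'$ the per-term bound cannot beat $2^{-(j-q)n}$. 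This is precisely why several formulations of this proposition in the literature assume $F'\in W^{[s]+1,\infty}$ (strictly more than $s$ derivatives also when $s\in\mathbb{N}$); covering the integer case with only $F'\in W^{n,\infty}$ needs a genuinely different device, not a cosmetic change of the Bernstein step. A secondary, fixable point: uniform $L^\infty$ bounds on $S_qv$ and $\Delta_qv$ do \emph{not} by themselves place $S_qv+\tau\Delta_qv$ in a compact subset of $I$ (the smoothing kernels are not nonnegative, so $S_qv$ may leave the range of $v$); one must either extend $F'$ from a neighbourhood of $J$ to $\mathbb{R}$ with comparable $W^{n,\infty}$ norm or treat finitely many low frequencies separately, and this is exactly where the dependence of $C$ on $I$ and $J$ enters --- your write-up should say so.
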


\section{Local existence}\setcounter{equation}{0}
In order to obtain the effective \textit{a priori} estimates by the
low- and high-frequency decomposition methods, we formulate
(\ref{M-E2}) into the symmetric hyperbolic-elliptic form
(\ref{M-E6}) in the Introduction.

Here, we give some remarks.
\begin{rmk}($\gamma>1$)\label{rem3.1}
(\ref{M-E5}) induces a variable change from the half-space
$\{(n,\mathbf{v},\nabla\phi)\in (0,+\infty)\times
\mathbb{R}^{N}\times \mathbb{R}^{N}\}$ to the open set
$\{(m,\mathbf{v},\nabla\phi)\in \mathbb{R}\times
\mathbb{R}^{N}\times
\mathbb{R}^{N}|\frac{\gamma-1}{2}m+\bar{\psi}>0\}$. It is easy to
show that for classical solutions $(n, \mathbf{v},\nabla\phi)$ away
from the vacuum, the system (\ref{M-E2}) is equivalent to
(\ref{M-E6}).
\end{rmk}

For the isothermal case ($\gamma=1$), we also have the similar
equivalence. Note that $\sqrt{A}\ln n$ is the enthalpy, instead of
the sound speed.
\begin{rmk}\label{rem3.2}
(\ref{M-E8}) induces a variable change from the half-space
$\{(n,\mathbf{v},\nabla\phi)\in (0,+\infty)\times
\mathbb{R}^{N}\times \mathbb{R}^{N}\}$ to the whole space
$\{(m,\mathbf{v},\nabla\phi)\in \mathbb{R}\times
\mathbb{R}^{N}\times \mathbb{R}^{N}\}$. It is easy to show that for
classical solutions $(n, \mathbf{v},\nabla\phi)$ away from the
vacuum, the system (\ref{M-E2}) is equivalent to (\ref{M-E6}).
\end{rmk}

The symmetric hyperbolic system (\ref{M-E9}) can also be rewrite as
the vector form
\begin{equation}\partial_{t}W^{\epsilon}+\sum_{j=1}^{N}A^{\epsilon}_{j}(\mathbf{v}^{\epsilon})\partial_{x_{j}}W^{\epsilon}=\left(%
\begin{array}{c}
  -\frac{\gamma-1}{2}m^{\epsilon}\mathrm{div}\, \mathbf{v}^{\epsilon} \\
   -\mathbf{v}^{\epsilon}-\frac{\gamma-1}{2}m^{\epsilon}\nabla m^{\epsilon}+\epsilon^{-1}\nabla\phi^{\epsilon} \\
\end{array}%
\right),\label{M-E11}
\end{equation}
coupled with the dynamic electron-potential equation
\begin{equation}
 \Delta\phi^{\epsilon}=\epsilon^{-1}h(\epsilon m^{\epsilon}),\label{M-E12}
\end{equation}
where
$$ W^{\epsilon}=\left(%
\begin{array}{c}
  m^{\epsilon} \\
  \mathbf{v}^{\epsilon}\\\end{array}%
  \right),\
A^{\epsilon}_{j}(\mathbf{v}^{\epsilon})=\left(%
\begin{array}{cc}
  v^{\epsilon j} & \frac{\bar{\psi}}{\epsilon}e_{j}^\top \\
  \frac{\bar{\psi}}{\epsilon}e_{j} & v^{\epsilon j}I_{N\times N} \\
\end{array}%
\right)$$
$$(I_{N\times N}\ \mbox{denotes the unit matrix of order}\  N $$$$\mbox{and}\ e_{j}\ \mbox{is}\ N\mbox{-dimensional
vertor where the $j$th component is one, others are zero} ).$$

Now, we recall a local existence result on classical solutions to
(\ref{M-E9})-(\ref{M-E10}), which has been obtained in \cite{FXZ}.

\begin{prop}\label{prop3.1} For any fixed $\epsilon\in(0,1]$,
suppose that
$(m^{\epsilon}_{0},\mathbf{v}^{\epsilon}_{0},\nabla\phi^{\epsilon}_{0})\in{B^{\sigma}_{2,1}}$
satisfying $\frac{\gamma-1}{2}\epsilon
m_{0}^{\epsilon}+\bar{\psi}>0$, then there exist a time $T_{0}>0$
and a unique solution $(m^{\epsilon}, \mathbf{v}^{\epsilon},
\nabla\phi^{\epsilon})$ to (\ref{M-E9})-(\ref{M-E10}) such that
$(m^{\epsilon}, \mathbf{v}^{\epsilon}, \nabla\phi^{\epsilon})\in
\mathcal{C}^{1}([0,T_{0}]\times \mathbb{R}^N)$ with
$\frac{\gamma-1}{2}\epsilon m^{\epsilon}+\bar{\psi}>0$ for all
$t\in[0,T_{0}]$ and $(m^{\epsilon}, \mathbf{v}^{\epsilon},
\nabla\phi^{\epsilon})\in
\mathcal{C}([0,T_{0}],B^{\sigma}_{2,1})\cap
\mathcal{C}^1([0,T_{0}],B^{\sigma-1}_{2,1})$.
\end{prop}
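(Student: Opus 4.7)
The plan is to follow the standard iteration-compactness scheme for quasilinear symmetric hyperbolic systems in critical Besov spaces (in the spirit of Danchin), combined with elliptic regularity for the Poisson equation. Since $\epsilon \in (0,1]$ is fixed throughout, constants are allowed to depend on $\epsilon$ and no uniform-in-$\epsilon$ control is needed at this stage.

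First I would construct approximating solutions $(m^k, \mathbf{v}^k, \nabla\phi^k)_{k\ge 0}$ iteratively. Take $(m^0, \mathbf{v}^0) = (S_0 m_0^\epsilon, S_0 \mathbf{v}_0^\epsilon)$ with $\nabla \phi^0$ determined by the Poisson equation, and given $W^k = (m^k,\mathbf{v}^k)^\top$, let $W^{k+1}$ solve the linear symmetric hyperbolic system
\begin{equation*}
\partial_t W^{k+1} + \sum_{j=1}^N A_j^\epsilon(\mathbf{v}^k)\,\partial_{x_j} W^{k+1} = F^k(W^k,\nabla\phi^k), \qquad W^{k+1}|_{t=0}=W_0^\epsilon,
\end{equation*}
where $F^k$ is the right-hand side of (\ref{M-E11}) evaluated at the $k$-th iterate, and set $\nabla\phi^{k+1} = \nabla\Delta^{-1}[\epsilon^{-1} h(\epsilon m^{k+1})]$. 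Existence of $W^{k+1}$ at each step follows from the classical theory of linear symmetric hyperbolic systems with $B^\sigma_{2,1}$ coefficients, together with the condition $\frac{\gamma-1}{2}\epsilon m^k+\bar\psi>0$ which is preserved for short times.

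Next I would derive uniform bounds on the iterates in $\mathcal{C}([0,T_0],B^\sigma_{2,1})$ for some $T_0>0$. Applying $\dot\Delta_q$ to the equation, taking the $L^2$ inner product with $\dot\Delta_q W^{k+1}$, and exploiting the symmetry of the $A_j^\epsilon$ yields, after commutator estimates of the form
\begin{equation*}
\|[\dot\Delta_q, A_j^\epsilon(\mathbf{v}^k)\partial_{x_j}]W^{k+1}\|_{L^2} \le Cc_q 2^{-q\sigma}\|\mathbf{v}^k\|_{B^\sigma_{2,1}}\|W^{k+1}\|_{B^\sigma_{2,1}},
\end{equation*}
and Bony paraproduct decomposition, a Gronwall-type inequality closing the $B^\sigma_{2,1}$ norms for small $T_0$. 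The composition estimate (Proposition \ref{prop2.3}) controls the nonlinear term $\epsilon^{-1} h(\epsilon m^k)$, and the product estimates are carried out via the algebra property of $B^\sigma_{2,1}$, since $\sigma=1+N/2>N/2$. For the electron-field source $\epsilon^{-1}\nabla\phi^k$, I would track $\nabla\phi^k$ by its own evolution equation, obtained by applying $\nabla\Delta^{-1}$ to the mass equation so that $\partial_t\nabla\phi^k = -\nabla\Delta^{-1}\mathrm{div}(n^k\mathbf{v}^k)/\epsilon = -\mathcal{Q}(n^k\mathbf{v}^k)/\epsilon$, thereby propagating $\nabla\phi_0^\epsilon \in B^\sigma_{2,1}$ in time. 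Standard contraction in the weaker norm $B^{\sigma-1}_{2,1}$ on the differences then yields a Cauchy sequence; passing to the limit via dyadic Fatou arguments gives a solution in $L^\infty(0,T_0;B^\sigma_{2,1})$, and time continuity at top regularity together with $\mathcal{C}^1([0,T_0];B^{\sigma-1}_{2,1})$ are read off from the equations. Uniqueness is a routine $B^{\sigma-1}_{2,1}$ energy estimate on the difference of two solutions.

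The principal obstacle is the coupling with the nonlocal electron-field term $\epsilon^{-1}\nabla\phi^\epsilon$: since $\nabla\Delta^{-1}$ is not bounded on $B^\sigma_{2,1}(\mathbb{R}^N)$ (the low frequencies are problematic), one cannot simply close the estimates by applying elliptic regularity to $\Delta\phi^\epsilon = \epsilon^{-1}h(\epsilon m^\epsilon)$. The resolution, as sketched above, is to propagate $\nabla\phi^\epsilon$ through its dynamical equation—inherited from mass conservation—whose source is a quadratic expression already controlled by the hyperbolic bounds. This is the same structural observation that will be exploited (in uniform-in-$\epsilon$ form) for the global theorem later in the paper.
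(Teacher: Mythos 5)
Your proposal is sound: the paper does not actually prove Proposition \ref{prop3.1} but recalls it from \cite{FXZ}, and the scheme you outline---linear symmetric-hyperbolic iterates with $B^{\sigma}_{2,1}$ bounds for fixed $\epsilon$, contraction in $B^{\sigma-1}_{2,1}$, and propagation of $\nabla\phi^{\epsilon}$ through its dynamical equation rather than through the low-frequency-unbounded operator $\nabla\Delta^{-1}$---is essentially the standard route used there, and it is consistent with how the paper itself treats the electron field via \eqref{M-E38} and with the separate hypothesis on $\mathbf{e}_{0}$ in Theorem \ref{thm1.1}. The one point to tidy is the internal tension between setting $\nabla\phi^{k+1}=\nabla\Delta^{-1}[\epsilon^{-1}h(\epsilon m^{k+1})]$ and then estimating $\nabla\phi^{k}$ dynamically: you should define the iterate directly by the transport-type equation $\partial_{t}\nabla\phi^{k+1}=-\epsilon^{-1}\mathcal{Q}\big((h(\epsilon m^{k})+\bar{n})\mathbf{v}^{k}\big)$ with data $\nabla\phi^{\epsilon}_{0}$, and recover the Poisson relation only for the limit solution, where it is propagated in time because $\mathrm{div}\,\mathcal{Q}=\mathrm{div}$ combined with the mass equation gives $\partial_{t}\big(\Delta\phi^{\epsilon}-\epsilon^{-1}h(\epsilon m^{\epsilon})\big)=0$.
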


\section{A uniform \textit{a priori} estimate}\setcounter{equation}{0}
In this section, we establish a crucial \textit{a priori} estimate
by using the low- and high-frequency decomposition methods, which is
used to derive the global existence and exponential stability of
classical solutions to (\ref{M-E9})-(\ref{M-E10}).
\begin{prop}\label{prop4.1}
There exist three positive constants $\delta_{1}, C_{1}$ and
$\mu_{1}$ independent of $\epsilon$, such that for any $T>0$, if
\begin{eqnarray}\sup_{0\leq t\leq
T}\|(m^{\epsilon},\mathbf{v}^{\epsilon},
\nabla\phi^{\epsilon})(\cdot,t)\|_{B^{\sigma}_{2,1}}\leq
\delta_{1},\label{M-E13}\end{eqnarray} then
\begin{eqnarray}\|(m^{\epsilon},\mathbf{v}^{\epsilon},
\nabla\phi^{\epsilon})(\cdot,t)\|_{B^{\sigma}_{2,1}}\leq
C_{1}\Big\|\Big(\frac{m_{0}}{\epsilon},\mathbf{v}_{0},\frac{\mathbf{e}_{0}}{\epsilon}\Big)\Big\|_{B^{\sigma}_{2,1}}\exp(-\mu_{1}t),
\label{M-E14}
 \end{eqnarray}
where $t\in[0,T]$.
\end{prop}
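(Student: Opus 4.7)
The plan is to combine a frequency-localized energy estimate on the symmetric form (\ref{M-E11}) with the Shizuta-Kawashima compensating-matrix method, adapted so that every factor $1/\epsilon$ arising from the singular principal part and the electron-field term is absorbed uniformly via the Poisson equation (\ref{M-E12}). First I would apply the dyadic block $\Delta_q$ to (\ref{M-E11}), set $W^\epsilon_q := \Delta_q W^\epsilon$, and take the $L^2$ inner product with $W^\epsilon_q$. Using the symmetry of the $A^\epsilon_j$, the singular off-diagonal coupling $m^\epsilon \leftrightarrow \mathbf{v}^\epsilon$ carrying the factor $\bar\psi/\epsilon$ cancels in the pairing, leaving the relaxation damping $-\|\mathbf{v}^\epsilon_q\|_{L^2}^2$, the dangerous source $\epsilon^{-1}\langle \nabla\phi^\epsilon_q,\mathbf{v}^\epsilon_q\rangle$, commutator terms $[\Delta_q,\mathbf{v}^\epsilon\cdot\nabla]W^\epsilon$, and quadratic nonlinearities controlled by the algebra property of $B^\sigma_{2,1}$ ($\sigma=1+N/2$) together with Proposition \ref{prop2.3} applied to the smooth composition $\epsilon^{-1}h(\epsilon\,\cdot)$ (which also gives the elliptic estimate $\|\epsilon^{-1}\nabla\phi^\epsilon\|_{B^\sigma_{2,1}}\lesssim \|\epsilon^{-1}m^\epsilon\|_{B^\sigma_{2,1}}$ under (\ref{M-E13})).

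The singular term $\epsilon^{-1}\langle\nabla\phi^\epsilon_q,\mathbf{v}^\epsilon_q\rangle$ is the crucial obstruction; this is the "small technique" alluded to at (\ref{M-E19})-(\ref{M-E20}). Using the first equation of (\ref{M-E9}) to express $\mathrm{div}\,\mathbf{v}^\epsilon$ in terms of $\partial_t m^\epsilon$ plus quadratic terms, and integrating by parts in $x$,
$$
\tfrac{1}{\epsilon}\!\int \nabla\phi^\epsilon_q\cdot\mathbf{v}^\epsilon_q\,dx = -\tfrac{1}{\epsilon}\!\int \phi^\epsilon_q\,\mathrm{div}\,\mathbf{v}^\epsilon_q\,dx = \tfrac{1}{\bar\psi}\tfrac{d}{dt}\!\int \phi^\epsilon_q\, m^\epsilon_q\,dx + (\text{nonlinear}),
$$
so that the singular source becomes an exact time derivative of a cross functional plus harmless quadratic errors. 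The cross functional is bounded by $\|\nabla\phi^\epsilon_q/\epsilon\|_{L^2}\|m^\epsilon_q\|_{L^2}$, itself controlled through the Poisson equation linearized around zero ($\Delta\phi^\epsilon = h'(0)m^\epsilon + O(\epsilon m^\epsilon{}^2)$).

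The basic identity provides no dissipation on $m^\epsilon_q$, so I would introduce a Shizuta-Kawashima correction. For the isentropic Euler-Poisson system the compensating skew-symmetric matrix $K(\xi)$ is explicit (Remark 1.3), and at the dyadic level I would build
$$
\mathcal{L}^\epsilon_q(t) := \tfrac{1}{2}\|W^\epsilon_q\|_{L^2}^2 + \kappa\,2^{-q}\,\mathrm{Re}\!\int (K(\xi)\widehat{W^\epsilon_q})\cdot\overline{\widehat{W^\epsilon_q}}\,d\xi - \tfrac{\kappa'}{\bar\psi}\!\int \phi^\epsilon_q m^\epsilon_q\,dx,
$$
with $\kappa,\kappa'>0$ small enough that $\mathcal{L}^\epsilon_q \sim \|W^\epsilon_q\|_{L^2}^2 + \|\nabla\phi^\epsilon_q/\epsilon\|_{L^2}^2$. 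Differentiating, the $K$-term converts the symmetric principal part into a positive definite dissipation of order $2^{q}\|m^\epsilon_q\|_{L^2}^2/\epsilon$ for $q$ large (high frequencies), while at low frequencies the Poisson mass $h'(0)\bar\psi/\epsilon^2$ coming from the Klein-Gordon-type wave structure displayed just before Section 1.2 gives dissipation of $\|m^\epsilon_q\|_{L^2}^2/\epsilon^2$. Multiplying the resulting block inequality by $2^{q\sigma}$, summing over $q\geq -1$, and invoking (\ref{M-E13}) to absorb the cubic nonlinear terms, a Gronwall argument yields (\ref{M-E14}) with a uniform exponential rate $\mu_1$; the initial-data form on the right follows from $m^\epsilon_0 = m_0/\epsilon$ and $\nabla\phi^\epsilon_0 = \mathbf{e}_0/\epsilon$ defined in (\ref{M-E10}).

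The hard part will be to keep \emph{all} the above estimates uniform in $\epsilon$ simultaneously. Each $1/\epsilon$ in the equation must be matched by either a $\nabla$ factor absorbed elliptically (for the electric field), a time derivative that integrates into the Lyapunov functional (for the cross term), or a gain coming from the compensating matrix, and the weights in $\mathcal{L}^\epsilon_q$ must be tuned so that the low-frequency regime (where the acoustic mass $1/\epsilon^2$ dominates) and the high-frequency regime (where the standard hyperbolic skew-symmetry mechanism operates) both produce coercive dissipation with $\epsilon$-independent constants. I would therefore carry out the estimate with a frequency threshold separating these two regimes, modeling the low-frequency side on the acoustic semigroup structure that Section 5 will exploit, and the high-frequency side on the standard $B^\sigma_{2,1}$ Euler-type analysis of \cite{FXZ}.
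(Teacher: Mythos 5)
Your overall architecture (dyadic energy estimate $+$ Shizuta--Kawashima correction $+$ Gronwall) is the paper's, but two of the mechanisms that actually make the estimate close uniformly in $\epsilon$ are missing or mis-stated. First, the electric field: you invoke ``$\|\epsilon^{-1}\nabla\phi^{\epsilon}\|_{B^{\sigma}_{2,1}}\lesssim\|\epsilon^{-1}m^{\epsilon}\|_{B^{\sigma}_{2,1}}$'', but $\nabla\Delta^{-1}$ behaves like $|\xi|^{-1}$ and is unbounded on the low-frequency block in $\mathbb{R}^{N}$, so this elliptic absorption is false; this is exactly why the hypotheses carry $\mathbf{e}_{0}/\epsilon$ as separate data and why $\nabla\phi^{\epsilon}$ must be propagated as an unknown with its own energy and its own dissipation. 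The paper does this by (a) converting the singular source into $-\frac{1}{2\bar{n}}\frac{d}{dt}\|\Delta_{q}\nabla\phi^{\epsilon}\|^{2}_{L^2}$ via the exact identity (\ref{M-E19})--(\ref{M-E20}) (your route through the symmetrized mass equation leaves the term $\int\partial_{t}\phi^{\epsilon}_{q}\,m^{\epsilon}_{q}$, which hides another $1/\epsilon$ and must again be resolved through the Poisson equation, so it is not a ``harmless quadratic error'' as claimed); (b) obtaining high-frequency control of $\nabla\phi^{\epsilon}$ from the elliptic estimate (\ref{M-E36}); and (c) --- the ingredient absent from your functional --- adding the cross term $-\epsilon\int\Delta_{-1}\nabla\phi^{\epsilon}\cdot\overline{\Delta_{-1}\mathbf{v}^{\epsilon}}$ (Lemma \ref{lem4.4}, estimate (\ref{M-E37})), which is what creates the dissipation $\|\Delta_{-1}\nabla\phi^{\epsilon}\|^{2}_{L^2}$ at low frequency. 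Without (c) nothing in your Lyapunov functional forces the low-frequency part of $\nabla\phi^{\epsilon}$ to decay, and the exponential bound (\ref{M-E14}) cannot be closed.

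Second, the compensating-matrix term is mis-scaled and mis-signed. As written, $\mathrm{Re}\int (K(\xi)\widehat{W^{\epsilon}_{q}})\cdot\overline{\widehat{W^{\epsilon}_{q}}}\,d\xi\equiv 0$, since $K$ is real skew-symmetric and hence $\widehat{W}^{\ast}K\widehat{W}$ is purely imaginary; what is needed is the imaginary part with the weight $\epsilon|\xi|$, as in (\ref{M-E23}) and (\ref{M-E43})--(\ref{M-E46}). The factor $\epsilon$ is not optional: differentiating the correction in time, the singular principal part produces, by (\ref{M-E22}) and (\ref{M-E30}), not only the good term $\sim|\xi||\widehat{\Delta_{q}m^{\epsilon}}|^{2}$ but an anti-dissipative term $\sim|\xi||\widehat{\Delta_{q}\mathbf{v}^{\epsilon}}|^{2}$ of the same order. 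With the $\epsilon$ weight both are $O(1)$ and the bad one is absorbed by the relaxation damping after choosing $K_{2}\ll K_{1}$; with your $\epsilon$-free correction they are $O(2^{q}/\epsilon)$ and the $O(1)$ damping $\|\Delta_{q}\mathbf{v}^{\epsilon}\|^{2}_{L^2}$ cannot absorb them uniformly. Consequently the dissipation you expect ($2^{q}\|m_{q}\|^{2}/\epsilon$ at high frequency, $\|m_{q}\|^{2}/\epsilon^{2}$ at low frequency from the ``Klein--Gordon mass'', which is dispersive, not dissipative) is neither attainable by this argument nor needed: the paper only ever produces $\epsilon$-independent $O(1)$ dissipation in all three components, and the extra dissipation on $m^{\epsilon}$ comes from a step you do not mention, namely rewriting the $O(1)$ electric-field contribution of the skew-symmetric pairing through the Poisson equation, identity (\ref{M-E34}), which turns it into $-(A\gamma)^{-\frac{1}{2}}\bar{n}^{\frac{3-\gamma}{2}}(2\pi)^{N}\|\Delta_{q}m^{\epsilon}\|^{2}_{L^2}$ plus a controllable remainder. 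Until the $\epsilon$-weighted SK term and the Poisson-based identities (\ref{M-E19}), (\ref{M-E34}), (\ref{M-E37}) are put in, the proposed proof does not yield a uniform rate $\mu_{1}$.
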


Having this Proposition \ref{prop4.1}, we can extend the
local-in-time solutions in Proposition \ref{prop3.1} by virtue of
the standard continuation argument and obtain the global existence
of uniform classical solutions to the system
(\ref{M-E9})-(\ref{M-E10}). Using the imbedding property in Besov
space $B^{\sigma}_{2,1}$, we know
$(m^{\epsilon},\mathbf{v}^{\epsilon},\nabla\phi^{\epsilon})\in
\mathcal{C}^{1}([0,\infty)\times \mathbb{R}^{N})$ solves
(\ref{M-E9})-(\ref{M-E10}). The choice of $\delta_{1}$ is sufficient
to ensure $\frac{\gamma-1}{2}\epsilon m^{\epsilon}+\bar{\psi}>0$ for
$0<\epsilon\leq1$. From the Remark \ref{rem3.1}, we achieve that
$(n,\mathbf{v},\nabla\phi)\in \mathcal{C}^{1}([0,\infty)\times
\mathbb{R}^{N})$ is a solution of (\ref{M-E2})-(\ref{M-E3}) with
$n>0$.\ Furthermore, we arrive at Theorem \ref{thm1.1}.

The main ingredients in the proof of Proposition \ref{prop4.1} are
the high-frequency $(q\geq0)$ estimates and low-frequency $(q=-1)$
estimates on $(m^{\epsilon},\mathbf{v}^{\epsilon},
\nabla\phi^{\epsilon})$. We divide it into several lemmas, since the
proof is a bit longer.
\begin{lem}\label{lem4.1}
If $(m^{\epsilon},\mathbf{v}^{\epsilon}, \nabla\phi^{\epsilon})\in
\mathcal{C}([0,T],B^{\sigma}_{2,1})\cap
\mathcal{C}^{1}([0,T],B^{\sigma-1}_{2,1})$ is a solution of
(\ref{M-E9})-(\ref{M-E10}) for any given $T>0$, then the following
estimate holds ($q\geq-1$):
\begin{eqnarray}&&\frac{1}{2}\frac{d}{dt}\Big(\|\Delta_{q}m^{\epsilon}\|^2_{L^2}+\|\Delta_{q}\mathbf{v}^{\epsilon}\|^2_{L^2}+\frac{1}{\bar{n}}\|\Delta_{q}\nabla\phi^{\epsilon}\|^2_{L^2}\Big)
+\|\Delta_{q}\mathbf{v}^{\epsilon}\|^2_{L^2}\nonumber\\
&\leq&
C\|(\Delta_{q}m^{\epsilon},\Delta_{q}\mathbf{v}^{\epsilon},\Delta_{q}\nabla\phi^{\epsilon})\|_{L^2}
\Big\{\|(\nabla
m^{\epsilon},\nabla\mathbf{v}^{\epsilon})\|_{L^{\infty}}\|(\Delta_{q}m^{\epsilon},\Delta_{q}\mathbf{v}^{\epsilon})\|_{L^2}
\nonumber\\&&+\|[\mathbf{v}^{\epsilon},\Delta_{q}]\cdot\nabla
m^{\epsilon}\|_{L^2}+\|[\mathbf{v}^{\epsilon},\Delta_{q}]\cdot\nabla
\mathbf{v}^{\epsilon}\|_{L^2}+\|[m^{\epsilon},\Delta_{q}]\mathrm{div}\,
\mathbf{v}^{\epsilon}\|_{L^2}\nonumber\\&&+\|[m^{\epsilon},\Delta_{q}]\nabla
m^{\epsilon}\|_{L^2}+\frac{1}{\epsilon}\|\Delta_{q}(h(\epsilon
m^{\epsilon})\mathbf{v}^{\epsilon})\|_{L^2}\Big\}, \label{M-E15}
\end{eqnarray}
where the commutator $[f, g]: = fg - gf$ and $C$ is a uniform
positive constant independent of $\epsilon$.
\end{lem}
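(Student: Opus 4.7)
} The plan is to derive three $L^2$ energy identities at the dyadic level (for $m^{\epsilon}$, $\mathbf{v}^{\epsilon}$, and $\nabla\phi^{\epsilon}$), combine them with the specific weight $\frac{1}{\bar{n}}$ on the $\nabla\phi^{\epsilon}$ part, and show that the two sources of $\epsilon^{-1}$-singularity cancel. First I would localize the momentum and continuity equations in (\ref{M-E9}) by applying $\Delta_{q}$, and rewrite each nonlinear term via a commutator identity of the type $\Delta_{q}(\mathbf{v}^{\epsilon}\!\cdot\!\nabla m^{\epsilon})=\mathbf{v}^{\epsilon}\!\cdot\!\nabla\Delta_{q}m^{\epsilon}+[\mathbf{v}^{\epsilon},\Delta_{q}]\!\cdot\!\nabla m^{\epsilon}$, and similarly for the other quadratic terms. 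Taking the $L^{2}$ inner products against $\Delta_{q}m^{\epsilon}$ and $\Delta_{q}\mathbf{v}^{\epsilon}$ and adding, one sees that the high-speed transport term $\bar{\psi}\epsilon^{-1}\nabla m^{\epsilon}/\bar{\psi}\epsilon^{-1}\mathrm{div}\mathbf{v}^{\epsilon}$ cancels by integration by parts, and the advective pieces produce the $\|(\nabla m^{\epsilon},\nabla\mathbf{v}^{\epsilon})\|_{L^{\infty}}\|(\Delta_{q}m^{\epsilon},\Delta_{q}\mathbf{v}^{\epsilon})\|_{L^{2}}$ contribution via $\int\mathbf{v}^{\epsilon}\!\cdot\!\nabla|\Delta_{q}m^{\epsilon}|^{2}$ and the cross term $\int m^{\epsilon}\nabla\Delta_{q}m^{\epsilon}\!\cdot\!\Delta_{q}\mathbf{v}^{\epsilon}+\int m^{\epsilon}\Delta_{q}\mathrm{div}\mathbf{v}^{\epsilon}\Delta_{q}m^{\epsilon}=-\int(\nabla m^{\epsilon}\!\cdot\!\Delta_{q}\mathbf{v}^{\epsilon})\Delta_{q}m^{\epsilon}$. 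The damping $-\|\Delta_{q}\mathbf{v}^{\epsilon}\|_{L^{2}}^{2}$ is produced directly by the momentum relaxation term.

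The central difficulty is the singular term $\epsilon^{-1}\langle\nabla\Delta_{q}\phi^{\epsilon},\Delta_{q}\mathbf{v}^{\epsilon}\rangle$. To handle it I would not differentiate (\ref{M-E12}) crudely in time; instead, I would use the original mass conservation $\partial_{t}n+\mathrm{div}(n\mathbf{v}^{\epsilon})=0$ together with $\Delta\phi=\epsilon\,\Delta\phi^{\epsilon}$ and $n-\bar{n}=h(\epsilon m^{\epsilon})$ to derive the evolution
\begin{equation*}
\partial_{t}\nabla\phi^{\epsilon}=-\frac{\bar{n}}{\epsilon}\,\mathcal{Q}\mathbf{v}^{\epsilon}-\frac{1}{\epsilon}\,\mathcal{Q}\bigl(h(\epsilon m^{\epsilon})\,\mathbf{v}^{\epsilon}\bigr),\qquad \mathcal{Q}=\nabla\Delta^{-1}\mathrm{div}.
\end{equation*}
Applying $\Delta_{q}$, pairing with $\frac{1}{\bar{n}}\Delta_{q}\nabla\phi^{\epsilon}$, and using that $\mathcal{Q}$ is an $L^{2}$-orthogonal projector fixing gradients (so that $\langle\mathcal{Q}\Delta_{q}\mathbf{v}^{\epsilon},\Delta_{q}\nabla\phi^{\epsilon}\rangle=\langle\Delta_{q}\mathbf{v}^{\epsilon},\Delta_{q}\nabla\phi^{\epsilon}\rangle$), I get
\begin{equation*}
\frac{1}{2\bar{n}}\frac{d}{dt}\|\Delta_{q}\nabla\phi^{\epsilon}\|_{L^{2}}^{2}=-\frac{1}{\epsilon}\langle\Delta_{q}\mathbf{v}^{\epsilon},\Delta_{q}\nabla\phi^{\epsilon}\rangle-\frac{1}{\bar{n}\epsilon}\langle\Delta_{q}\mathcal{Q}(h(\epsilon m^{\epsilon})\mathbf{v}^{\epsilon}),\Delta_{q}\nabla\phi^{\epsilon}\rangle.
\end{equation*}
The first term on the right exactly cancels $\epsilon^{-1}\langle\nabla\Delta_{q}\phi^{\epsilon},\Delta_{q}\mathbf{v}^{\epsilon}\rangle$ from the momentum identity; this is why the weight $\frac{1}{\bar{n}}$ appears in (\ref{M-E15}). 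The remaining quadratic term is dominated by Cauchy--Schwarz and the $L^{2}$-boundedness of $\mathcal{Q}$, producing the factor $\frac{1}{\epsilon}\|\Delta_{q}(h(\epsilon m^{\epsilon})\mathbf{v}^{\epsilon})\|_{L^{2}}$ (uniformly controllable since $h(0)=0$ implies $h(\epsilon m^{\epsilon})/\epsilon=h'(0)m^{\epsilon}+O(\epsilon)$).

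Gathering every remaining term and applying Cauchy--Schwarz with the factor $\|(\Delta_{q}m^{\epsilon},\Delta_{q}\mathbf{v}^{\epsilon},\Delta_{q}\nabla\phi^{\epsilon})\|_{L^{2}}$ pulled out yields exactly the bound (\ref{M-E15}). I would expect the main obstacle to be the bookkeeping around the cancellation of the electric-field singularity: one must be careful that the evolution equation for $\nabla\phi^{\epsilon}$ is derived in a form compatible with the hyperbolic symmetrization, that only gradient-like quantities enter the pairing so that $\mathcal{Q}$ can be removed, and that the residual nonlinear coupling $\frac{1}{\epsilon}h(\epsilon m^{\epsilon})\mathbf{v}^{\epsilon}$ remains uniformly bounded in $\epsilon$. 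The remainder, namely the commutator and IBP estimates at each dyadic level, is routine.
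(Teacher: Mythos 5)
Your proposal is correct and follows essentially the same route as the paper: the energy identity obtained by pairing the localized equations with $\Delta_{q}m^{\epsilon}$, $\Delta_{q}\mathbf{v}^{\epsilon}$ is identical, and your treatment of the singular coupling via the evolution law $\nabla\phi^{\epsilon}_{t}=-\frac{1}{\epsilon}\nabla\Delta^{-1}\mathrm{div}\{(\bar{n}+h(\epsilon m^{\epsilon}))\mathbf{v}^{\epsilon}\}$ is just the paper's identity (\ref{M-E19})--(\ref{M-E20}) (equivalently (\ref{M-E38})) written as a separate equation for $\nabla\phi^{\epsilon}$ rather than substituted for $\mathrm{div}\,\mathbf{v}^{\epsilon}$ after integration by parts, producing the same cancellation, the same weight $\frac{1}{\bar{n}}$, and the same residual term $\frac{1}{\epsilon}\|\Delta_{q}(h(\epsilon m^{\epsilon})\mathbf{v}^{\epsilon})\|_{L^{2}}$.
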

\begin{proof}
Applying the localization operator $\Delta_{q}$ to (\ref{M-E9})
gives
\begin{equation}
\left\{
\begin{array}{l}
\hspace{5mm}\partial_{t}\Delta_{q}m^{\epsilon} +
\bar{\psi}\epsilon^{-1}\mbox{div}\Delta_{q}\mathbf{v}^{\epsilon}
+(\mathbf{v}^{\epsilon}\cdot\nabla)\Delta_{q}m^{\epsilon}
\\=[\mathbf{v}^{\epsilon},\Delta_{q}]\cdot\nabla m^{\epsilon}
-\frac{\gamma-1}{2}m^{\epsilon}\Delta_{q}\mbox{div}\mathbf{v}^{\epsilon}+\frac{\gamma-1}{2}[m^{\epsilon},\Delta_{q}]\mbox{div}\mathbf{v}^{\epsilon},\\
\\\hspace{5mm}
\partial_{t}\Delta_{q}\mathbf{v}^{\epsilon}+
\bar{\psi}\epsilon^{-1}\nabla
\Delta_{q}m^{\epsilon}+(\mathbf{v}^{\epsilon}\cdot\nabla)\Delta_{q}\mathbf{v}^{\epsilon}\\=[\mathbf{v}^{\epsilon},\Delta_{q}]\cdot\nabla\mathbf{v}^{\epsilon}
-\frac{\gamma-1}{2}m^{\epsilon}\Delta_{q}\nabla
m^{\epsilon}+\frac{\gamma-1}{2}[m^{\epsilon},\Delta_{q}]\nabla
m^{\epsilon}\\\hspace{5mm}+\epsilon^{-1}\nabla\Delta_{q}\phi^{\epsilon}
-\Delta_{q}\mathbf{v}^{\epsilon}\\\\
\Delta_{q}\Delta\phi^{\epsilon}=\epsilon^{-1}\Delta_{q}h(\epsilon
m^{\epsilon}).
\end{array}
\right.\label{M-E16}
\end{equation}
Then, by multiplying  the first equation of (\ref{M-E16}) by
$\Delta_{q}m^{\epsilon}$, the second one by
$\Delta_{q}\mathbf{v}^{\epsilon}$ respectively, and adding the two
resulting equations together, then integrating the resulting
equations over $\mathbb{R}^N$, we get
\begin{eqnarray}
&&\frac{1}{2}\frac{d}{dt}\Big(\|\Delta_{q}m^{\epsilon}\|^2_{L^2} +
\|\Delta_{q}\mathbf{v}^{\epsilon}\|^2_{L^2}\Big)+\|\Delta_{q}\mathbf{v}^{\epsilon}\|^2_{L^2}
\nonumber\\
& = &
\int\mathrm{div}\mathbf{v}^{\epsilon}(|\Delta_{q}m^{\epsilon}|^2+|\Delta_{q}\mathbf{v}^{\epsilon}|^2)
\nonumber\\
    &&+\int([\mathbf{v}^{\epsilon},\Delta_{q}]\cdot\nabla
m^{\epsilon}\Delta_{q}m^{\epsilon}
+[\mathbf{v}^{\epsilon},\Delta_{q}]\cdot\nabla\mathbf{v}^{\epsilon}\Delta_{q}\mathbf{v}^{\epsilon})
\nonumber\\&&+\frac{\gamma-1}{2}\int\Delta_{q}m^{\epsilon}(\nabla
m^{\epsilon}\cdot\Delta_{q}\mathbf{v}^{\epsilon})
+\frac{\gamma-1}{2}
\int[m^{\epsilon},\Delta_{q}]\mbox{div}\mathbf{v}^{\epsilon}\cdot
\Delta_{q}m^{\epsilon}\nonumber\\
&&+\frac{\gamma-1}{2} \int[m^{\epsilon},\Delta_{q}]\nabla
m^{\epsilon}\cdot\Delta_{q}\mathbf{v}^{\epsilon}
+\frac{1}{\epsilon}\int\nabla\Delta_{q}\phi^{\epsilon}\cdot
\Delta_{q}\mathbf{v}^{\epsilon}.
 \label{M-E17}
\end{eqnarray}
%Taking the real parts on the both sides of (\ref{M-E17}) and
%integrating by parts, we obtain
%\begin{eqnarray}
%&&\frac{1}{2}\frac{d}{dt}\Big(\|\Delta_{q}m^{\epsilon}\|^2_{L^2}
%+\|\Delta_{q}\mathbf{v}^{\epsilon}\|^2_{L^2}\Big)+\|\Delta_{q}\mathbf{v}^{\epsilon}\|^2_{L^2}
%\nonumber\\
%& =
%&\int\mathrm{div}\mathbf{v}^{\epsilon}(|\Delta_{q}m^{\epsilon}|^2+|\Delta_{q}\mathbf{v}^{\epsilon}|^2)
%\nonumber\\
%    &&+\mathrm{Re}\int([\mathbf{v}^{\epsilon},\Delta_{q}]\cdot\nabla
%m^{\epsilon}\overline{\Delta_{q}m^{\epsilon}}
%+[\mathbf{v}^{\epsilon},\Delta_{q}]
%\cdot\nabla\mathbf{v}^{\epsilon}\overline{\Delta_{q}\mathbf{v}^{\epsilon}})
%\nonumber\\&&+\frac{\gamma-1}{2}\mathrm{Re}\int\Delta_{q}m^{\epsilon}\nabla
%m^{\epsilon}\cdot\overline{\Delta_{q}\mathbf{v}^{\epsilon}}
%+\frac{\gamma-1}{2}\mathrm{Re}
%\int[m^{\epsilon},\Delta_{q}]\mbox{div}\mathbf{v}^{\epsilon}\cdot\overline{\Delta_{q}m^{\epsilon}}\nonumber\\&&+\frac{\gamma-1}{2}
%\mathrm{Re}\int[m^{\epsilon},\Delta_{q}]\nabla
%m^{\epsilon}\cdot\overline{\Delta_{q}\mathbf{v}^{\epsilon}}+\frac{1}{\epsilon}\mathrm{Re}\int\nabla\Delta_{q}\phi^{\epsilon}\cdot
%\overline{\Delta_{q}\mathbf{v}^{\epsilon}}.
 %\label{M-E18}
%\end{eqnarray}

In above equality (\ref{M-E17}), we may use the spectral
localization mass equation and Poisson equation in (\ref{M-E16}) in
order to eliminate the singularity from the electron-field term
similar to the idea in \cite{ACJP}, but this will cause very tedious
calculations. Here, we observe an equality
\begin{eqnarray}
\mathrm{div}\mathbf{v}^{\epsilon}=-\frac{\epsilon\mathrm{div}\nabla
\phi^{\epsilon}_{t}+\mathrm{div}(h(\epsilon
m^{\epsilon})\mathbf{v}^{\epsilon})}{\bar{n}}\label{M-E19}
\end{eqnarray}
following from the first equation and the third one in (\ref{M-E2})
under the symmetrization. Hence, we deduce that
\begin{eqnarray}
&&\frac{1}{\epsilon}\int\nabla\Delta_{q}
\phi^{\epsilon}\cdot\Delta_{q}\mathbf{v}^{\epsilon}\nonumber\\
&=&-\frac{1}{\epsilon}\int\Delta_{q}\phi^{\epsilon}
\Delta_{q}\mathrm{div}\mathbf{v}^{\epsilon}\nonumber\\
&=&\frac{1}{\bar{n}\epsilon}\int\Delta_{q}\phi^{\epsilon}\Delta_{q}
\Big(\epsilon\mathrm{div}\mathit\nabla\phi^{\epsilon}_{t}+\mathrm{div}(h(\epsilon
m^{\epsilon})\mathbf{v}^{\epsilon})\Big)\nonumber\\
&=&-\frac{1}{2\bar{n}}\frac{d}{dt}\|\Delta_{q}\nabla\phi^{\epsilon}\|_{L^2}^2-
\frac{1}{\bar{n}\epsilon}\int\Delta_{q}\nabla\phi^{\epsilon}
\Delta_{q}(h(\epsilon
m^{\epsilon})\mathbf{v}^{\epsilon}),\nonumber\\
&\leq&-\frac{1}{2\bar{n}}\frac{d}{dt}\|\Delta_{q}\nabla\phi^{\epsilon}\|_{L^2}^2+
\frac{1}{\bar{n}\epsilon}\|\Delta_{q}(h(\epsilon
m^{\epsilon})\mathbf{v}^{\epsilon})\|_{L^2}\|\Delta_{q}\nabla\phi^{\epsilon}\|_{L^2}.
\label{M-E20}
\end{eqnarray}

Together with (\ref{M-E17}) and (\ref{M-E20}), we arrive at
(\ref{M-E15}) immediately with the aid of Cauchy-Schwartz
inequality.
\end{proof}

In this position, we formulate an important skew-symmetry lemma
which has been well developed in \cite{CG,KY,Y2}, which is sometimes
referred to as the ``Kawashima condition".
\begin{lem}[Shizuta-Kawashima] \label{lem4.2}For all ~$\xi\in \mathbb{R}^{N},\
\xi\neq0$, there exists a real skew-symmetric smooth matrix $K(\xi)$
which is defined in the unit sphere $\textbf{S}^{N-1}$:
\begin{eqnarray}
K(\xi)=\left(%
\begin{array}{cc}
  0 & \frac{\xi^\top}{|\xi|} \\
  -\frac{\xi}{|\xi|} & 0 \\
\end{array}%
\right),\label{M-E21}
\end{eqnarray}
such that
\begin{eqnarray}
K(\xi)\sum_{j=1}^{N}\xi_{j}A^{\epsilon}_{j}(0)=\left(%
\begin{array}{cc}
  \frac{\bar{\psi}}{\epsilon}|\xi| & 0 \\
  0 & -\frac{\bar{\psi}}{\epsilon}\frac{\xi\otimes\xi}{|\xi|} \\
\end{array}%
\right),\label{M-E22}
\end{eqnarray}
where $A_{j}^{\epsilon}$ is the matrix appearing in the system
(\ref{M-E11}).
\end{lem}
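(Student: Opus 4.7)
The plan is a direct block-matrix computation, since the statement is really a verification that the specific candidate $K(\xi)$ exhibits the asserted algebraic identity. I would proceed in three short steps.

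First I would substitute $\mathbf{v}^{\epsilon}=0$ into the definition of $A^{\epsilon}_{j}$ from (\ref{M-E11}), which annihilates the diagonal blocks $v^{\epsilon j}$ and $v^{\epsilon j}I_{N\times N}$. Summing against $\xi_{j}$ then collapses the anti-diagonal blocks to $\bar\psi\xi^\top/\epsilon$ and $\bar\psi\xi/\epsilon$, giving
\begin{equation*}
\sum_{j=1}^{N}\xi_{j}A^{\epsilon}_{j}(0)=\frac{\bar\psi}{\epsilon}\begin{pmatrix}0 & \xi^\top \\ \xi & 0\end{pmatrix},
\end{equation*}
a $(1+N)\times(1+N)$ block matrix compatible with the splitting $(m^\epsilon,\mathbf{v}^\epsilon)$.

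Next I would check that the candidate matrix $K(\xi)$ in (\ref{M-E21}) is indeed real, skew-symmetric, and smooth on $\mathbf{S}^{N-1}$. Skew-symmetry is immediate from inspection: the off-diagonal blocks $\xi^\top/|\xi|$ and $-\xi/|\xi|$ are transposes of each other up to sign, so $K(\xi)^\top=-K(\xi)$; smoothness is automatic because $1/|\xi|$ is smooth away from the origin, and in particular on the unit sphere.

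Finally I would carry out the $2\times 2$ block multiplication $K(\xi)\cdot\frac{\bar\psi}{\epsilon}\begin{pmatrix}0&\xi^\top\\\xi&0\end{pmatrix}$ entry by entry. The top-left block uses $\xi^\top\xi=|\xi|^2$ to produce $\bar\psi|\xi|/\epsilon$; the bottom-right block uses the dyadic identity $\xi\xi^\top=\xi\otimes\xi$ to produce $-\bar\psi(\xi\otimes\xi)/(\epsilon|\xi|)$; the two off-diagonal blocks vanish because each product contains a factor of $0$. This matches (\ref{M-E22}) exactly. There is no genuine obstacle here; the only care required is in tracking dimensions between the scalar $m^\epsilon$-slot and the $N$-vector $\mathbf{v}^\epsilon$-slot and the sign in the bottom-right block, after which the identity reads off directly.
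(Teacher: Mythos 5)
Your verification is correct: with $\mathbf{v}^{\epsilon}=0$ the diagonal blocks of $A^{\epsilon}_{j}$ vanish, $\sum_{j}\xi_{j}A^{\epsilon}_{j}(0)=\frac{\bar{\psi}}{\epsilon}\bigl(\begin{smallmatrix}0&\xi^{\top}\\ \xi&0\end{smallmatrix}\bigr)$, and the block product with $K(\xi)$ gives exactly (\ref{M-E22}) via $\xi^{\top}\xi=|\xi|^{2}$ and $\xi\xi^{\top}=\xi\otimes\xi$, with skew-symmetry and smoothness of $K$ evident. The paper itself states this lemma without proof, citing the references \cite{CG,KY,Y2} where the Kawashima condition is developed, so your direct block computation is precisely the verification that is implicitly being invoked and nothing further is needed.
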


Due to the skew-symmetry structure of the system (\ref{M-E11}), we
can develop some new frequency-localization estimates and avoid
performing the $t$-derivative to (\ref{M-E11}) as in \cite{FXZ}.
\begin{lem}\label{lem4.3}
If $(m^{\epsilon},\mathbf{v}^{\epsilon}, \nabla\phi^{\epsilon})\in
\mathcal{C}([0,T],B^{\sigma}_{2,1})\cap
\mathcal{C}^{1}([0,T],B^{\sigma-1}_{2,1})$ is a solution of
(\ref{M-E9})-(\ref{M-E10}) for any given $T>0$, then the following
estimates hold:
\begin{eqnarray}
&&\frac{\epsilon}{2}\frac{d}{dt}\mathrm{Im}\int|\xi|
\Big((\widehat{\Delta_{q}W^{\epsilon}})^{\ast}K(\xi)\widehat{\Delta_{q}W^{\epsilon}}\Big)d\xi
+\frac{\bar{\psi}}{2}2^{2q}\|\Delta_{q}m^{\epsilon}\|^2_{L^2}\nonumber\\
&\leq&
C2^{2q}\|\Delta_{q}\mathbf{v}^{\epsilon}\|^2_{L^2}+C\epsilon2^{q}\|\Delta_{q}W^{\epsilon}\|_{L^2}(\|\Delta_{q}\mathcal{G}\|_{L^2}
+\|m^{\epsilon}\|_{L^{\infty}}\|\Delta_{q}\mathrm{div}\mathbf{v}^{\epsilon}\|_{L^2}\nonumber\\&&+\|[m^{\epsilon},\Delta_{q}]\mathrm{div}\mathbf{v}^{\epsilon}\|_{L^2}
+\|m^{\epsilon}\|_{L^{\infty}}\|\Delta_{q}\nabla
m^{\epsilon}\|_{L^2}+\|[m^{\epsilon},\Delta_{q}]\nabla
m^{\epsilon}\|_{L^2}) \nonumber\\&&+C\|\Delta_{q}(H(\epsilon
m^{\epsilon})m^{\epsilon})\|_{L^2}\|\Delta_{q}m^{\epsilon}\|_{L^2}\
\ (q\geq0);\label{M-E23}\end{eqnarray}
\begin{eqnarray}
&&\frac{\epsilon}{2}\frac{d}{dt}\mathrm{Im}\int|\xi|
\Big((\widehat{\Delta_{-1}W^{\epsilon}})^{\ast}K(\xi)\widehat{\Delta_{-1}W^{\epsilon}}\Big)d\xi
+(A\gamma)^{-\frac{1}{2}}\bar{n}^{\frac{3-\gamma}{2}}\|\Delta_{-1}m^{\epsilon}\|^2_{L^2}\nonumber\\
&\leq&
C\|\Delta_{-1}\mathbf{v}^{\epsilon}\|^2_{L^2}+C\epsilon\|\Delta_{-1}W^{\epsilon}\|_{L^2}(\|\Delta_{-1}\mathcal{G}\|_{L^2}
+\|m^{\epsilon}\|_{L^{\infty}}\|\Delta_{-1}\mathrm{div}\mathbf{v}^{\epsilon}\|_{L^2}\nonumber\\&&+\|[m^{\epsilon},\Delta_{-1}]\mathrm{div}\mathbf{v}^{\epsilon}\|_{L^2}
+\|m^{\epsilon}\|_{L^{\infty}}\|\Delta_{-1}\nabla
m^{\epsilon}\|_{L^2}+\|[m^{\epsilon},\Delta_{-1}]\nabla
m^{\epsilon}\|_{L^2}) \nonumber\\&&+C\|\Delta_{-1}(H(\epsilon
m^{\epsilon})m^{\epsilon})\|_{L^2}\|\Delta_{-1}m^{\epsilon}\|_{L^2}.
\label{M-E24}\end{eqnarray} where the function $\mathcal{G}$ is
given by (\ref{M-E26}), $H(m)=\int_{0}^{1}h'(\varsigma
m)d\varsigma-(A\gamma)^{-\frac{1}{2}}\bar{n}^{\frac{3-\gamma}{2}}$
is a smooth function on $\{m|\varsigma m+\bar{h}>0,\
\varsigma\in[0,1]\}$ satisfying $H(0)=0$ and $C$ is a uniform
positive constant independent of $\epsilon$.
\end{lem}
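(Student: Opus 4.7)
The plan is to carry out a Shizuta--Kawashima calculation in Fourier variables, using the explicit form of $K(\xi)A(0)(\xi)$ from Lemma \ref{lem4.2} to extract the dissipation on $m^\epsilon$ missed by the $L^2$ estimate of Lemma \ref{lem4.1}, and to defuse the singular Poisson coupling through a structural cancellation. I first apply $\Delta_q$ to (\ref{M-E11})--(\ref{M-E12}) and take the Fourier transform, writing the localized system as $\partial_t\widehat{\Delta_q W^\epsilon}+iA(0)(\xi)\widehat{\Delta_q W^\epsilon}=\widehat{\Delta_q R^\epsilon}$ with $A(0)(\xi)=\sum_j\xi_jA_j^\epsilon(0)$ and $R^\epsilon$ gathering every non-principal contribution (convection, damping, quadratic pressure, and the singular factor $\epsilon^{-1}\nabla\phi^\epsilon$). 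Since $K$ is real skew-symmetric and $A(0)$ real symmetric, direct differentiation yields $\frac{d}{dt}(\hat W_q^{\ast}K\hat W_q)=i\hat W_q^{\ast}[A(0),K]\hat W_q+\hat R_q^{\ast}K\hat W_q+\hat W_q^{\ast}K\hat R_q$; Lemma \ref{lem4.2} guarantees that $KA(0)$ is itself symmetric, so $[A(0),K]=-2KA(0)$. Combining this with the fact that $\hat W_q^{\ast}K\hat W_q$ is purely imaginary and the identity $-i(\hat R_q^{\ast}K\hat W_q+\hat W_q^{\ast}K\hat R_q)=2\,\mathrm{Im}(\hat W_q^{\ast}K\hat R_q)$, multiplying by $-i|\xi|$, integrating in $\xi$ and rescaling by $\epsilon/2$ produces the master identity
\begin{equation*}
\tfrac{\epsilon}{2}\tfrac{d}{dt}\mathrm{Im}\!\int\!|\xi|\,\hat W_q^{\ast}K\hat W_q\,d\xi=-\bar\psi\!\int\!|\xi|^2|\hat m_q|^2\,d\xi+\bar\psi\!\int\!|\xi\cdot\hat{\mathbf v}_q|^2\,d\xi+\epsilon\!\int\!|\xi|\,\mathrm{Im}(\hat W_q^{\ast}K\hat R_q)\,d\xi.
\end{equation*}

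Next I treat the singular Poisson factor in $\hat R_q$. Using $h(m)=m\int_0^1h'(\varsigma m)\,d\varsigma$ and (\ref{M-E12}), I split
\begin{equation*}
\epsilon^{-1}\nabla\phi^\epsilon=(A\gamma)^{-1/2}\bar n^{(3-\gamma)/2}\,\epsilon^{-1}\nabla\Delta^{-1}m^\epsilon+\epsilon^{-1}\nabla\Delta^{-1}\!\bigl(H(\epsilon m^\epsilon)m^\epsilon\bigr).
\end{equation*}
The linear term has Fourier symbol $-i(A\gamma)^{-1/2}\bar n^{(3-\gamma)/2}\epsilon^{-1}\xi/|\xi|^2$; contracted with $\overline{\hat m_q}$ through the top-right block $\xi^\top/|\xi|$ of $K$, and weighted by the outer $|\xi|$ and $\epsilon$, the factors collapse by $\xi^\top\xi/|\xi|^2=1$ to exactly $-(A\gamma)^{-1/2}\bar n^{(3-\gamma)/2}\|\Delta_q m^\epsilon\|_{L^2}^2$, a dissipation that I move to the left-hand side. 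The nonlinear remainder undergoes the identical cancellation without residual singularity and produces, by Cauchy--Schwarz, the single term $C\|\Delta_q(H(\epsilon m^\epsilon)m^\epsilon)\|_{L^2}\|\Delta_q m^\epsilon\|_{L^2}$ on the right.

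The remaining pieces of $R^\epsilon$ are routine. The transport and damping parts grouped into $\mathcal G$ are controlled by Plancherel and the support condition $|\xi|\lesssim 2^q$, giving $C\epsilon 2^q\|\Delta_q W^\epsilon\|_{L^2}\|\Delta_q\mathcal G\|_{L^2}$ (with $2^q$ replaced by a fixed constant at $q=-1$). The quadratic-pressure contributions $\frac{\gamma-1}{2}(m^\epsilon\mathrm{div}\,\mathbf v^\epsilon,m^\epsilon\nabla m^\epsilon)^\top$ are decomposed via $\Delta_q(m^\epsilon f)=m^\epsilon\Delta_q f+[\Delta_q,m^\epsilon]f$, producing the four $\|m^\epsilon\|_{L^\infty}$-plus-commutator terms of (\ref{M-E23})--(\ref{M-E24}). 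For $q\geq 0$, Bernstein's inequality yields $\int|\xi|^2|\hat m_q|^2\,d\xi\geq\tfrac{9}{16}2^{2q}\|\Delta_q m^\epsilon\|_{L^2}^2$ and $\int|\xi\cdot\hat{\mathbf v}_q|^2\,d\xi\leq C2^{2q}\|\Delta_q\mathbf v^\epsilon\|_{L^2}^2$; discarding the extra Poisson dissipation produces the coefficient $\bar\psi/2$ of (\ref{M-E23}). For $q=-1$, the $|\xi|^2$-weighted dissipation is bounded and discarded, and the required low-frequency dissipation $(A\gamma)^{-1/2}\bar n^{(3-\gamma)/2}\|\Delta_{-1}m^\epsilon\|_{L^2}^2$ comes entirely from the Poisson step.

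The main obstacle is that Poisson step: naively $\epsilon^{-1}\nabla\phi^\epsilon$ is an $O(\epsilon^{-1})$ source that would destroy uniformity, and no symmetrizer adapted to the principal part of (\ref{M-E11}) sees it. The cancellation above---three powers of $|\xi|$ from the integration weight, the angular factor of $K$, and $\nabla\Delta^{-1}$ combining to a constant---not only removes the singularity but simultaneously provides the indispensable low-frequency dissipation needed for (\ref{M-E24}), where the acoustic $|\xi|^2$-term degenerates.
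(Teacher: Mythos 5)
Your proposal is correct and follows essentially the same route as the paper: the frequency-localized Shizuta--Kawashima energy $\mathrm{Im}\int|\xi|\,\widehat{\Delta_qW^{\epsilon}}^{\ast}K(\xi)\widehat{\Delta_qW^{\epsilon}}\,d\xi$ together with the explicit diagonal form of $K(\xi)\sum_j\xi_jA_j^{\epsilon}(0)$, commutator decompositions for the quadratic terms, and the key cancellation of the singular $\epsilon^{-1}\nabla\phi^{\epsilon}$ term via the Poisson equation and the splitting $h(\epsilon m^{\epsilon})=\epsilon m^{\epsilon}\bigl((A\gamma)^{-1/2}\bar n^{(3-\gamma)/2}+H(\epsilon m^{\epsilon})\bigr)$, which yields exactly the low-frequency dissipation and the $H$-remainder in \eqref{M-E23}--\eqref{M-E24}. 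The only cosmetic difference is that you execute that cancellation on the Fourier side through the symbol of $\nabla\Delta^{-1}$, while the paper converts the weighted integral back to physical space and integrates by parts (see \eqref{M-E34}); the computation is the same.
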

\begin{proof}
The system (\ref{M-E11}) can be written as the linearized form
\begin{equation}\partial_{t}W^{\epsilon}+\sum_{j=1}^{N}A^{\epsilon}_{j}(0)\partial_{x_{j}}W^{\epsilon}
=\mathcal{G}+\left(%
\begin{array}{c}
  -\frac{\gamma-1}{2}m^{\epsilon}\mathrm{div}\, \mathbf{v}^{\epsilon}\\
   -\mathbf{v}^{\epsilon}-\frac{\gamma-1}{2}m^{\epsilon}\nabla m^{\epsilon}+\frac{1}{\epsilon}\nabla\phi^{\epsilon}\\
\end{array}%
\right),\label{M-E25}\end{equation}where
\begin{eqnarray}
\mathcal{G}=\sum_{j=1}^{N}\Big\{A^{\epsilon}_{j}(0)-A^{\epsilon}_{j}(\mathbf{v}^{\epsilon})\Big\}\partial_{x_{j}}W^{\epsilon}.\label{M-E26}
\end{eqnarray}
Applying the operator $\Delta_{q}$ to the system (\ref{M-E25}) gives
\begin{eqnarray}
    &&\partial_{t}\Delta_{q}W^{\epsilon}+\sum_{j=1}^{N}A^{\epsilon}_{j}(0)
    \partial_{x_{j}}\Delta_{q}W^{\epsilon}\nonumber\\
        &=&\Delta_{q}\mathcal{G}+
    \left(%
\begin{array}{c}
  -\frac{\gamma-1}{2}\Delta_{q}(m^{\epsilon}\mathrm{div}\, \mathbf{v}^{\epsilon}) \\
   -\Delta_{q}\mathbf{v}^{\epsilon}-\frac{\gamma-1}{2}\Delta_{q}(m^{\epsilon}\nabla m^{\epsilon})+\frac{1}{\epsilon}\Delta_{q}\nabla\phi^{\epsilon}\\
\end{array}%
\right),\label{M-E27}
\end{eqnarray}
By performing the Fourier transform with respect to the space
variable $x$ for (\ref{M-E27}) and multiplying the resulting
equation by
$-i\epsilon(\widehat{\Delta_{q}W^{\epsilon}})^{\ast}K(\xi)$($^{\ast}$
represents transpose and conjugate), then taking the real part of
each term in the equality, we can obtain
\begin{eqnarray}
&& \epsilon\mathrm{Im}
\Big((\widehat{\Delta_{q}W^{\epsilon}})^{\ast}K(\xi)\frac{d}{dt}\widehat{\Delta_{q}W^{\epsilon}}\Big)+\epsilon(\widehat{\Delta_{q}W^{\epsilon}})^{\ast}K(\xi)
\Big(\sum_{j=1}^{N}\xi_{j}A^{\epsilon}_{j}(0)\Big)\widehat{\Delta_{q}W^{\epsilon}}\nonumber\\
&=&
\epsilon\mathrm{Im}\Big((\widehat{\Delta_{q}W^{\epsilon}})^{\ast}K(\xi)(\widehat{\Delta_{q}\mathcal{G}})\Big)-\epsilon\mathrm{Im}
\Big((\overline{\widehat{\Delta_{q}m^{\epsilon}}})\frac{\xi^{\top}}{|\xi|}
\widehat{\Delta_{q}\mathbf{v}^{\epsilon}}\Big)
\nonumber\\&&+\mathrm{Im}
\Big((\overline{\widehat{\Delta_{q}m^{\epsilon}}})
\frac{\xi^{\top}}{|\xi|}\widehat{\Delta_{q}\nabla\phi^{\epsilon}}\Big)
+\frac{(\gamma-1)\epsilon}{2}\mathrm{Im}\Big(\overline{\widehat{\Delta_{q}\mathbf{v}^{\epsilon}}}\cdot\frac{\xi}{|\xi|}\widehat{(\Delta_{q}(m^{\epsilon}\mathrm{div}\,
\mathbf{v}^{\epsilon}))}\Big)\nonumber\\
    &&
-\frac{(\gamma-1)\epsilon}{2}\mathrm{Im}\Big(\overline{\widehat{\Delta_{q}m^{\epsilon}}}\frac{\xi^{\top}}{|\xi|}\widehat{(\Delta_{q}(m^{\epsilon}\nabla
m^{\epsilon}))}\Big).\label{M-E28}
\end{eqnarray}
Using the skew-symmetry of $K(\xi)$, we have
\begin{eqnarray}
\mathrm{Im}
\Big((\widehat{\Delta_{q}W^{\epsilon}})^{\ast}K(\xi)\frac{d}{dt}\widehat{\Delta_{q}W^{\epsilon}}\Big)=\frac{1}{2}\frac{d}{dt}\mathrm{Im}
\Big((\widehat{\Delta_{q}W^{\epsilon}})^{\ast}K(\xi)\widehat{\Delta_{q}W^{\epsilon}}\Big).\label{M-E29}
\end{eqnarray}
Substituting (\ref{M-E22}) into the second term on the left-hand
side of (\ref{M-E28}), it is not difficult to get
\begin{eqnarray}
&&\epsilon\mathrm{Im}
\Big((\widehat{\Delta_{q}W^{\epsilon}})^{\ast}K(\xi)\frac{d}{dt}\widehat{\Delta_{q}W^{\epsilon}}\Big)+\epsilon(\widehat{\Delta_{q}W^{\epsilon}})^{\ast}K(\xi)
\Big(\sum_{j=1}^{N}\xi_{j}A^{\epsilon}_{j}(0)\Big)\widehat{\Delta_{q}W^{\epsilon}}\nonumber\\&\geq&
\frac{\epsilon}{2}\frac{d}{dt}\mathrm{Im}
\Big((\widehat{\Delta_{q}W^{\epsilon}})^{\ast}K(\xi)\widehat{\Delta_{q}W^{\epsilon}}\Big)+\bar{\psi}|\xi||\widehat{\Delta_{q}W^{\epsilon}}|^2-2\bar{\psi}
|\xi||\widehat{\Delta_{q}\mathbf{v}^{\epsilon}}|^2.\label{M-E30}
\end{eqnarray}
With the help of Young inequality, the right-hand side of
(\ref{M-E28}) can be estimated as
\begin{eqnarray}
&&\epsilon\mathrm{Im}\Big((\widehat{\Delta_{q}W^{\epsilon}})^{\ast}K(\xi)(\widehat{\Delta_{q}\mathcal{G}})\Big)-\epsilon\mathrm{Im}
\Big((\overline{\widehat{\Delta_{q}m^{\epsilon}}})
\frac{\xi^{\top}}{|\xi|}\widehat{\Delta_{q}\mathbf{v}^{\epsilon}}\Big)
\nonumber\\
    &&+\mathrm{Im}
\Big((\overline{\widehat{\Delta_{q}m^{\epsilon}}})\frac{\xi^{\top}}{|\xi|}\widehat{\Delta_{q}\nabla\phi^{\epsilon}}\Big)
+\frac{(\gamma-1)\epsilon}{2}\mathrm{Im}\Big(\overline{\widehat{\Delta_{q}\mathbf{v}^{\epsilon}}}\cdot\frac{\xi}{|\xi|}\widehat{(\Delta_{q}(m^{\epsilon}\mathrm{div}\,
\mathbf{v}^{\epsilon}))}\Big)\nonumber\\
    &&
-\frac{(\gamma-1)\epsilon}{2}\mathrm{Im}\Big(\overline{\widehat{\Delta_{q}m^{\epsilon}}}\frac{\xi^{\top}}{|\xi|}\widehat{(\Delta_{q}(m^{\epsilon}\nabla
m^{\epsilon}))}\Big)\nonumber\\&\leq&
\frac{\bar{\psi}}{2}|\xi||\widehat{\Delta_{q}W^{\epsilon}}|^2+\frac{C}{|\xi|}|\widehat{\Delta_{q}\mathbf{v}^{\epsilon}}|^2
+\epsilon|\widehat{\Delta_{q}W^{\epsilon}}||\widehat{\Delta_{q}\mathcal{G}}|+C\epsilon|\widehat{\Delta_{q}\mathbf{v}^{\epsilon}}||\widehat{(\Delta_{q}(m^{\epsilon}\mathrm{div}\,
\mathbf{v}^{\epsilon}))}|
\nonumber\\&&+C\epsilon|\widehat{\Delta_{q}m^{\epsilon}}||\widehat{(\Delta_{q}(m^{\epsilon}\nabla
m^{\epsilon})})|+\mathrm{Im}
\Big((\overline{\widehat{\Delta_{q}m^{\epsilon}}})\frac{\xi^{\top}}{|\xi|}\widehat{\Delta_{q}\nabla\phi^{\epsilon}}\Big),
\label{M-E31}
\end{eqnarray}
where we have used the uniform boundedness of the matrix
$K(\xi)(\xi\neq 0)$. Combining the equality (\ref{M-E28}) and the
inequality (\ref{M-E30})-(\ref{M-E31}), we deduce
\begin{eqnarray}
&&\frac{\epsilon}{2}\frac{d}{dt}\mathrm{Im}
\Big((\widehat{\Delta_{q}W^{\epsilon}})^{\ast}K(\xi)\widehat{\Delta_{q}W^{\epsilon}}\Big)+\frac{\bar{\psi}}{2}|\xi||\widehat{\Delta_{q}W^{\epsilon}}|^2\nonumber\\&\leq&C\Big(|\xi|+\frac{1}{|\xi|}\Big)|\widehat{\Delta_{q}\mathbf{v}^{\epsilon}}|^2
+\epsilon|\widehat{\Delta_{q}W^{\epsilon}}||\widehat{\Delta_{q}\mathcal{G}}|+C\epsilon|\widehat{\Delta_{q}\mathbf{v}^{\epsilon}}||\widehat{(\Delta_{q}(m^{\epsilon}\mathrm{div}\,
\mathbf{v}^{\epsilon}))}|\nonumber\\&&+C\epsilon|\widehat{\Delta_{q}m^{\epsilon}}||\widehat{(\Delta_{q}(m^{\epsilon}\nabla
m^{\epsilon})})|+\mathrm{Im}
\Big((\overline{\widehat{\Delta_{q}m^{\epsilon}}})\frac{\xi^{\top}}{|\xi|}\widehat{\Delta_{q}\nabla\phi}\Big).\label{M-E32}
\end{eqnarray}
Multiplying (\ref{M-E32}) by $|\xi|$ and integrating it over
$\mathbb{R}^{N}$, using Plancherel's theorem, we obtain
\begin{eqnarray}
&&\frac{\epsilon}{2}\frac{d}{dt}\mathrm{Im}\int|\xi|
\Big((\widehat{\Delta_{q}W^{\epsilon}})^{\ast}K(\xi)\widehat{\Delta_{q}W^{\epsilon}}\Big)d\xi
+\frac{\bar{\psi}}{2}\|\Delta_{q}\nabla W^{\epsilon}\|^2_{L^2}\nonumber\\
&\leq&C(2^{2q}+1)\|\Delta_{q}\mathbf{v}^{\epsilon}\|^2_{L^2}+C\epsilon2^{q}\|\Delta_{q}W^{\epsilon}\|_{L^2}\|\Delta_{q}\mathcal{G}\|_{L^2}
\nonumber\\&&+C\epsilon2^{q}\|\Delta_{q}\mathbf{v}^{\epsilon}\|_{L^2}
\|\Delta_{q}(m^{\epsilon}\mathrm{div} \mathbf{v}^{\epsilon})\|_{L^2}
+C\epsilon2^{q}\|\Delta_{q}m^{\epsilon}\|_{L^2}\|\Delta_{q}(m^{\epsilon}\nabla
m^{\epsilon})\|_{L^2}\nonumber\\
    &&+\mathrm{Im}
\Big((\overline{\widehat{\Delta_{q}m^{\epsilon}}})\xi^{\top}\widehat{\Delta_{q}\nabla\phi^{\epsilon}}\Big)d\xi.
\label{M-E33}\end{eqnarray} Furthermore, the last term on the
right-hand side of (\ref{M-E33}) can be estimated as
\begin{eqnarray}&&
\mathrm{Im}
\Big((\overline{\widehat{\Delta_{q}m^{\epsilon}}})\xi^{\top}\widehat{\Delta_{q}\nabla\phi^{\epsilon}}\Big)d\xi\nonumber\\
&=&-\frac{\mathrm{\emph{i}}}{2}\int\Big((\overline{\widehat{\Delta_{q}m^{\epsilon}}})\xi^{\top}\widehat{\Delta_{q}\nabla\phi^{\epsilon}}\Big)d\xi
+\frac{\mathrm{\emph{i}}}{2}\int\Big((\widehat{\Delta_{q}m^{\epsilon}})\xi^{\top}\overline{\widehat{\Delta_{q}\nabla\phi^{\epsilon}}}\Big)d\xi\nonumber\\
&=&\frac{1}{2}\int(\overline{\widehat{\Delta_{q}\nabla
m^{\epsilon}}})\cdot\widehat{\Delta_{q}\nabla\phi^{\epsilon}}d\xi+\frac{1}{2}\int(\widehat{\Delta_{q}\nabla
m^{\epsilon}})\cdot\overline{\widehat{\Delta_{q}\nabla\phi^{\epsilon}}}d\xi\nonumber\\
 &=&\frac{(2\pi)^{N}}{2}\Big\{\int\overline{\Delta_{q}\nabla
 m^{\epsilon}}\cdot\Delta_{q}\nabla\phi^{\epsilon} dx+\int\Delta_{q}\nabla
 m^{\epsilon}\cdot\overline{\Delta_{q}\nabla\phi^{\epsilon}}dx\Big\}
\nonumber\\
 &=&-\frac{(2\pi)^{N}}{2}\Big\{\int\overline{\Delta_{q}m^{\epsilon}}\Delta_{q}\Delta\phi^{\epsilon} dx+\int\Delta_{q}m^{\epsilon}\overline{\Delta_{q}\Delta\phi^{\epsilon}}dx\Big\}\nonumber\\
&=&-\frac{(2\pi)^{N}}{2\epsilon}\Big\{\int\overline{\Delta_{q}m^{\epsilon}}\Delta_{q}(h(\epsilon m^{\epsilon})-h(0))dx+\int\Delta_{q}m^{\epsilon}\overline{\Delta_{q}(h(\epsilon m^{\epsilon})-h(0))}dx\Big\}\nonumber\\
&=&-(A\gamma)^{-\frac{1}{2}}\bar{n}^{\frac{3-\gamma}{2}}(2\pi)^{N}\|\Delta_{q}m^{\epsilon}\|^2_{L^2}\nonumber\\&&-\frac{(2\pi)^{N}}{2}\Big\{\int\overline{\Delta_{q}m^{\epsilon}}\Delta_{q}(H(\epsilon
m^{\epsilon})m^{\epsilon})dx
+\int\Delta_{q}m^{\epsilon}\overline{\Delta_{q}(H(\epsilon
m^{\epsilon})m^{\epsilon})}dx\Big\}, \label{M-E34}\end{eqnarray}
where $H(m)=\int_{0}^{1}h'(\varsigma
m)d\varsigma-(A\gamma)^{-\frac{1}{2}}\bar{n}^{\frac{3-\gamma}{2}}$
is a smooth function on $\{m|\frac{\gamma-1}{2}\varsigma
m+\bar{\psi}>0,\ \varsigma\in[0,1]\}$ satisfying $H(0)=0$.
Therefore, from (\ref{M-E33})-(\ref{M-E34}), we have
\begin{eqnarray}
&&\frac{\epsilon}{2}\frac{d}{dt}\mathrm{Im}\int|\xi|
\Big((\widehat{\Delta_{q}W^{\epsilon}})^{\ast}K(\xi)\widehat{\Delta_{q}W^{\epsilon}}\Big)d\xi
+\frac{\bar{\psi}}{2}\|\Delta_{q}\nabla
W^{\epsilon}\|^2_{L^2}\nonumber\\
    &&+
(A\gamma)^{-\frac{1}{2}}\bar{n}^{\frac{3-\gamma}{2}}(2\pi)^{N}\|\Delta_{q}m^{\epsilon}\|^2_{L^2}\nonumber\\
&\leq&
C(2^{2q}+1)\|\Delta_{q}\mathbf{v}^{\epsilon}\|^2_{L^2}+C\epsilon2^{q}\|\Delta_{q}W^{\epsilon}\|_{L^2}\|\Delta_{q}\mathcal{G}\|_{L^2}
\nonumber\\
    &&+C\epsilon2^{q}\|\Delta_{q}\mathbf{v}^{\epsilon}\|_{L^2}
\|\Delta_{q}(m^{\epsilon}\mathrm{div}\,
\mathbf{v}^{\epsilon})\|_{L^2}
+C\epsilon2^{q}\|\Delta_{q}m^{\epsilon}\|_{L^2}\|\Delta_{q}(m^{\epsilon}\nabla
m^{\epsilon})\|_{L^2}\nonumber\\
        &&+C\|\Delta_{q}(H(\epsilon
m^{\epsilon})m^{\epsilon})\|_{L^2}\|\Delta_{q}m^{\epsilon}\|_{L^2}.
\label{M-E35}
\end{eqnarray} In view of Lemma \ref{lem2.1}
$$\|\Delta_{q}\nabla f\|_{L^2}\approx
2^{q}\|\Delta_{q}f\|_{L^2}\ (q\geq0),$$ we get the estimate
(\ref{M-E23}) and (\ref{M-E24}) immediately.
\end{proof}
On the electron field $\nabla\phi$, we have the following \textit{a
priori} estimates.
\begin{lem}\label{lem4.4}
If $(m^{\epsilon},\mathbf{v}^{\epsilon}, \nabla\phi^{\epsilon})\in
\mathcal{C}([0,T],B^{\sigma}_{2,1})\cap
\mathcal{C}^{1}([0,T],B^{\sigma-1}_{2,1})$ is a solution of
(\ref{M-E9})-(\ref{M-E10}) for any given $T>0$, then
\begin{eqnarray}&&2^{2q}\|\Delta_{q}\nabla\phi^{\epsilon}\|^2_{L^2}
\label{M-E36}\\
&\leq&
C\Big((A\gamma)^{-\frac{1}{2}}\bar{n}^{\frac{3-\gamma}{2}}\|\Delta_{q}m^{\epsilon}\|_{L^2}
+\|\Delta_{q}(H(\epsilon
m^{\epsilon})m^{\epsilon})\|_{L^2}\Big)2^{q}\|\Delta_{q}\nabla\phi^{\epsilon}\|_{L^2}\
\ \ (q\geq0);\nonumber\end{eqnarray}
\begin{eqnarray}
&&-\epsilon\frac{d}{dt}\int\Delta_{-1}\nabla\phi^{\epsilon}\cdot\overline{\Delta_{-1}\mathbf{v}^{\epsilon}}
+(A\gamma)^{-\frac{1}{2}}\bar{n}^{\frac{3-\gamma}{2}}\bar{\psi}\|\Delta_{-1}m^{\epsilon}\|^2_{L^2}
+\|\Delta_{-1}\nabla\phi^{\epsilon}\|^2_{L^2}
\nonumber\\
&\leq&
C(\bar{n}\|\Delta_{-1}\mathbf{v}^{\epsilon}\|_{L^2}+\|\Delta_{-1}(h(\epsilon
m^{\epsilon})\mathbf{v}^{\epsilon})\|_{L^2})\|\Delta_{-1}\mathbf{v}^{\epsilon}\|_{L^2}\nonumber\\&&
+C\Big(\|\Delta_{-1}\mathbf{v}^{\epsilon}\|_{L^2}+\|\mathbf{v}^{\epsilon}\|_{L^{\infty}}\|\Delta_{-1}\nabla\mathbf{v}^{\epsilon}\|_{L^2}
+\|[\mathbf{v}^{\epsilon},\Delta_{-1}]\nabla\mathbf{v}^{\epsilon}\|_{L^2}\nonumber\\
&&+\|m^{\epsilon}\|_{L^{\infty}}\|\Delta_{-1}\nabla
m^{\epsilon}\|_{L^2} +\|[m^{\epsilon},\Delta_{-1}]\nabla
m^{\epsilon}\|_{L^2}\Big)\|\Delta_{-1}\nabla\phi^{\epsilon}\|_{L^2}\nonumber\\
    &&
+C\|\Delta_{-1}(H(\epsilon m^{\epsilon})m^{\epsilon})\|_{L^2}
\|\Delta_{-1}m^{\epsilon}\|_{L^2},\label{M-E37}
\end{eqnarray}
where $C$ is a uniform positive constant independent of $\epsilon$.
\end{lem}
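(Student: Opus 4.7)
The key ingredient shared by both estimates is a decomposition of the Poisson equation. Since $h(0)=0$, writing $h(\epsilon m^\epsilon)=\epsilon m^\epsilon\int_0^1 h'(\varsigma\epsilon m^\epsilon)\,d\varsigma$ and recalling that $h'(0)=(A\gamma)^{-1/2}\bar{n}^{(3-\gamma)/2}$ (as already used in the computation (\ref{M-E34})), one obtains
\begin{equation*}
\Delta\phi^{\epsilon}=h'(0)\,m^{\epsilon}+H(\epsilon m^{\epsilon})\,m^{\epsilon}.
\end{equation*}

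For the high-frequency estimate (\ref{M-E36}) with $q\geq 0$, the plan is to apply $\Delta_q$ to the identity above, multiply by $-\Delta_q\phi^\epsilon$ and integrate; the left-hand side becomes $\|\Delta_q\nabla\phi^\epsilon\|_{L^2}^2$ and Cauchy--Schwarz controls the right-hand side by $\|\Delta_q\phi^\epsilon\|_{L^2}\bigl(h'(0)\|\Delta_q m^\epsilon\|_{L^2}+\|\Delta_q(H(\epsilon m^\epsilon)m^\epsilon)\|_{L^2}\bigr)$. Invoking Bernstein's inequality (Lemma \ref{lem2.1}) in the form $\|\Delta_q\phi^\epsilon\|_{L^2}\le C2^{-q}\|\Delta_q\nabla\phi^\epsilon\|_{L^2}$ for $q\geq 0$, and multiplying through by $2^{2q}$, immediately yields (\ref{M-E36}).

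For the low-frequency estimate (\ref{M-E37}) Bernstein is unavailable, so I would instead exploit the localized momentum equation from (\ref{M-E16}) at $q=-1$, isolate $\epsilon^{-1}\Delta_{-1}\nabla\phi^\epsilon$, and take the $L^2$ inner product against $\epsilon\,\Delta_{-1}\nabla\phi^\epsilon$. The time derivative contribution $\epsilon\int\partial_t\Delta_{-1}\mathbf{v}^\epsilon\cdot\Delta_{-1}\nabla\phi^\epsilon$ is integrated by parts in $t$, producing both the $-\epsilon\frac{d}{dt}\int\Delta_{-1}\nabla\phi^\epsilon\cdot\Delta_{-1}\mathbf{v}^\epsilon$ sitting on the left of (\ref{M-E37}) and a residual $-\epsilon\int\Delta_{-1}\mathbf{v}^\epsilon\cdot\Delta_{-1}\partial_t\nabla\phi^\epsilon$. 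The pressure term $\bar\psi\int\Delta_{-1}\nabla m^\epsilon\cdot\Delta_{-1}\nabla\phi^\epsilon$, after integration by parts in $x$ and substitution of the Poisson decomposition, generates the coercive contribution $\bar\psi h'(0)\|\Delta_{-1}m^\epsilon\|_{L^2}^2$ together with a nonlinear remainder bounded by $\|\Delta_{-1}(H(\epsilon m^\epsilon)m^\epsilon)\|_{L^2}\|\Delta_{-1}m^\epsilon\|_{L^2}$. All remaining convection, quadratic, commutator and damping contributions carry an outer prefactor $\epsilon\leq 1$ and are handled by Cauchy--Schwarz against $\Delta_{-1}\nabla\phi^\epsilon$, giving the bracketed factor on the right of (\ref{M-E37}).

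The main technical obstacle is the term $-\epsilon\int\Delta_{-1}\mathbf{v}^\epsilon\cdot\Delta_{-1}\partial_t\nabla\phi^\epsilon$: naively $\partial_t\nabla\phi^\epsilon$ is $O(\epsilon^{-1})$ and would destroy the balance. This is precisely where the compensation identity (\ref{M-E19}) is indispensable, as it rearranges to $\epsilon\,\partial_t\nabla\phi^\epsilon=-\mathcal{Q}\bigl[\bar{n}\mathbf{v}^\epsilon+h(\epsilon m^\epsilon)\mathbf{v}^\epsilon\bigr]$ with the Helmholtz projector $\mathcal{Q}=\nabla\Delta^{-1}\mathrm{div}$ bounded on $L^2$. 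This yields the bound $C\|\Delta_{-1}\mathbf{v}^\epsilon\|_{L^2}\bigl(\bar{n}\|\Delta_{-1}\mathbf{v}^\epsilon\|_{L^2}+\|\Delta_{-1}(h(\epsilon m^\epsilon)\mathbf{v}^\epsilon)\|_{L^2}\bigr)$, matching the first line of the right-hand side of (\ref{M-E37}). This is the same singular-term-compensation trick already used in (\ref{M-E19})--(\ref{M-E20}) for Lemma \ref{lem4.1}.
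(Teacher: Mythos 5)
Your proposal is correct and follows essentially the same route as the paper: for $q\geq0$ it is the localized Poisson equation with the decomposition $\epsilon^{-1}h(\epsilon m^{\epsilon})=h'(0)m^{\epsilon}+H(\epsilon m^{\epsilon})m^{\epsilon}$, Bernstein and H\"older (the paper tests with $\Delta_{q}\mathrm{div}\nabla\phi^{\epsilon}$ rather than $-\Delta_{q}\phi^{\epsilon}$, which is the same estimate), and for $q=-1$ your computation is the paper's cross-term estimate $-\epsilon\frac{d}{dt}\int\Delta_{-1}\nabla\phi^{\epsilon}\cdot\overline{\Delta_{-1}\mathbf{v}^{\epsilon}}$ written in reverse order, using exactly the identity (\ref{M-E38}) (your rearrangement of (\ref{M-E19})), the $L^{2}$-boundedness of the Riesz transforms, and the same coercive terms $\bar{\psi}h'(0)\|\Delta_{-1}m^{\epsilon}\|_{L^2}^{2}$ and $\|\Delta_{-1}\nabla\phi^{\epsilon}\|_{L^2}^{2}$.
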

\begin{proof}
By applying the localization operator $\Delta_{q}(q\geq0)$ to both
sides of $\mbox{div}\nabla\phi^{\epsilon}=\epsilon^{-1}h(\epsilon
m^{\epsilon})$ , integrating it over $ \mathbb{R}^{N}$ after
multiplying $\Delta_{q}\mbox{div}\nabla\phi^{\epsilon}$, and
noticing the irrotationality of $\nabla\phi^{\epsilon}$, we can
obtain (\ref{M-E36}) in virtue of H\"{o}lder's inequality.

 From (\ref{M-E2}) and (\ref{M-E5}), we get
\begin{eqnarray}
\nabla\phi^{\epsilon}_{t}=-\frac{1}{\epsilon}\nabla\Delta^{-1}\nabla\cdot\{h(\epsilon
m^{\epsilon})\mathbf{v}^{\epsilon}+\bar{n}\mathbf{v}^{\epsilon}\},\label{M-E38}
\end{eqnarray}
where the non-local term $\nabla\Delta^{-1}\nabla \cdot f$ is the
product of Riesz transforms on $f$. From (\ref{M-E9}) and
(\ref{M-E38}), we have
\begin{eqnarray}
&&-\epsilon\frac{d}{dt}\int\Delta_{-1}\nabla\phi^{\epsilon}\cdot\overline{\Delta_{-1}\mathbf{v}^{\epsilon}}\nonumber\\
&=&-\epsilon\int\Delta_{-1}\nabla\phi^{\epsilon}_{t}\cdot\overline{\Delta_{-1}\mathbf{v}^{\epsilon}}-\epsilon\int\Delta_{-1}\nabla\phi^{\epsilon}\cdot\overline{\Delta_{-1}\mathbf{v}^{\epsilon}_{t}}
\nonumber\\&=&\mathcal{I}+\int\nabla\Delta^{-1}\nabla\cdot\Delta_{-1}\{h(\epsilon
m^{\epsilon})\mathbf{v}^{\epsilon}+\bar{n}\mathbf{v}^{\epsilon}\}\overline{\Delta_{-1}\mathbf{v}^{\epsilon}}
\nonumber\\&&-\epsilon\int\Delta_{-1}\nabla\phi^{\epsilon}\cdot\Big(-\overline{\Delta_{-1}\mathbf{v}^{\epsilon}}-\mathbf{v}^{\epsilon}\overline{\Delta_{-1}\nabla\mathbf{v}^{\epsilon}}+\overline{[\mathbf{v}^{\epsilon},\Delta_{-1}]\nabla\mathbf{v}^{\epsilon}}
\nonumber\\&&-\frac{\gamma-1}{2}m\overline{\Delta_{-1}\nabla
m^{\epsilon}}+\frac{\gamma-1}{2}\overline{[m^{\epsilon},\Delta_{-1}]\nabla
m^{\epsilon}}+\frac{1}{\epsilon}\overline{\Delta_{-1}\nabla\phi^{\epsilon}}\Big)
\label{M-E39}
\end{eqnarray}
where $\mathcal{I}$ can be estimated as
\begin{eqnarray}
\mathcal{I}&=&\bar{\psi}\int\Delta_{-1}\nabla\phi^{\epsilon}\overline{\Delta_{-1}\nabla
m^{\epsilon}}\nonumber\\&=&-\bar{\psi}\int\Delta_{-1}\Delta\phi^{\epsilon}\overline{\Delta_{-1}
m^{\epsilon}}\nonumber\\&=&-\frac{\bar{\psi}}{\epsilon}\int\Delta_{-1}h(\epsilon
m^{\epsilon})\overline{\Delta_{-1}
m^{\epsilon}}\nonumber\\&=&-(A\gamma)^{-\frac{1}{2}}\bar{n}^{\frac{3-\gamma}{2}}\bar{\psi}\|\Delta_{-1}m^{\epsilon}\|^2_{L^2}
-\bar{\psi}\int\Delta_{-1}(H(\epsilon
m^{\epsilon})m^{\epsilon})\overline{\Delta_{-1}m^{\epsilon}}.
\label{M-E40}
\end{eqnarray}
Then using the $L^2$-boundedness of Riesz transform and H\"{o}lder's
inequality, we derive (\ref{M-E37}) immediately. \end{proof}

For the estimates of the commutators in (\ref{M-E15}) and
(\ref{M-E23})-(\ref{M-E24}) and (\ref{M-E37}), we have the following
conclusion.
\begin{lem}[see \cite{FXZ}]\label{lem4.5}
Let $s>0$ and $1<p<\infty$; then the following inequalities are
true:
\begin{eqnarray*}
&&2^{qs}\|[f,\Delta_{q}]\mathcal{A}g\|_{L^{p}}\nonumber\\&\leq&
\left\{
\begin{array}{l}
 Cc_{q}\|f\|_{B^{s}_{p,1}}\|g\|_{B^{s}_{p,1}},\ \ \
\ \ \ f,g\in B^{s}_{p,1},\ s=1+N/p,\\
 Cc_{q}\|f\|_{B^{s}_{p,1}}\|g\|_{B^{s+1}_{p,1}},\
 \ \ \ f\in B^{s}_{p,1},\ \ g\in B^{s+1}_{p,1}, \ s=N/p,\\
 Cc_{q}\|f\|_{B^{s+1}_{p,1}}\|g\|_{B^{s}_{p,1}},\ \
\ \ \ f\in B^{s+1}_{p,1},\ \ g\in B^{s}_{p,1}, \ s=N/p.
\end{array} \right.
\end{eqnarray*}
In particular, if $f=g$, then
\begin{eqnarray*}
2^{qs}\|[f,\Delta_{q}]\mathcal{A}g\|_{L^{p}} \leq Cc_{q}\|\nabla
f\|_{L^{\infty}}\|g\|_{B^{s}_{p,1}},\ s>0,
\end{eqnarray*}
where the operator $\mathcal{A}=\mathrm{div}$ or $\mathrm{\nabla}$,
$C$ is a harmless constant, and $c_{q}$ denotes a sequence such that
$\|(c_{q})\|_{ {l^{1}}}\leq 1.$
\end{lem}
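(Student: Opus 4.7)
The natural approach is paradifferential. I would begin by writing Bony's decomposition $fg = T_f g + T_g f + R(f,g)$, where $T_f g = \sum_{q'} S_{q'-1}f\,\Delta_{q'}g$ is the paraproduct and $R(f,g) = \sum_{|q'-q''|\leq 1}\Delta_{q'}f\,\Delta_{q''}g$ is the remainder. Applied to $fg$ with $g$ replaced by $\mathcal{A}g$, this yields
\begin{eqnarray*}
[f,\Delta_q]\mathcal{A}g &=& [T_f,\Delta_q]\mathcal{A}g \;+\; T_{\Delta_q\mathcal{A}g}f - \Delta_q\bigl(T_{\mathcal{A}g}f\bigr) \\
&& +\; R\bigl(f,\Delta_q\mathcal{A}g\bigr) - \Delta_q R(f,\mathcal{A}g),
\end{eqnarray*}
so it suffices to bound each of the three groups of terms separately in the weighted $\ell^1$ sense $\|2^{qs}\cdot\|_{L^p}$.

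The main piece is the paracommutator $[T_f,\Delta_q]\mathcal{A}g = \sum_{|q'-q|\leq N_0}[S_{q'-1}f,\Delta_q]\Delta_{q'}\mathcal{A}g$, thanks to Proposition~\ref{prop2.1}. For this piece I use the convolution representation $\Delta_q u(x) = 2^{qN}\!\int h(2^q(x-y))u(y)\,dy$, which gives
\[
[S_{q'-1}f,\Delta_q]v(x) = 2^{qN}\!\!\int h(2^q(x-y))\bigl(S_{q'-1}f(y)-S_{q'-1}f(x)\bigr)v(y)\,dy.
\]
A first-order Taylor expansion of $S_{q'-1}f$ together with Young's convolution inequality produces the standard ``gain of one derivative''
\[
\|[S_{q'-1}f,\Delta_q]v\|_{L^p} \leq C\,2^{-q}\|\nabla S_{q'-1}f\|_{L^\infty}\|v\|_{L^p},
\]
and since $\mathcal{A}$ costs one derivative (so $\|\Delta_{q'}\mathcal{A}g\|_{L^p}\approx 2^{q'}\|\Delta_{q'}g\|_{L^p}$ for $q'\geq 0$) the two powers of $2^{q}$ cancel and we obtain an $\ell^1_q$-summable bound controlled by $\|\nabla f\|_{L^\infty}\|g\|_{B^{s}_{p,1}}$ for any $s>0$. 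This already delivers the special case $f=g$, and, invoking the embedding $B^{1+N/p}_{p,1}\hookrightarrow W^{1,\infty}$ from Lemma~\ref{lem2.2}, it gives the first regime $s=1+N/p$.

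For the remaining two terms $T_{\Delta_q\mathcal{A}g}f-\Delta_q T_{\mathcal{A}g}f$ and $R(f,\Delta_q\mathcal{A}g)-\Delta_q R(f,\mathcal{A}g)$, I appeal to the standard continuity estimates for paraproducts and remainders in Besov spaces: $T:B^{-1}_{\infty,\infty}\times B^{s}_{p,1}\to B^{s-1}_{p,1}$ and $R:B^{s_1}_{p,1}\times B^{s_2}_{p,1}\to B^{s_1+s_2-N/p}_{p,1}$ provided $s_1+s_2>0$. Combining with the spectral support information ($\Delta_q\mathcal{A}g$ is localized at frequency $2^q$, so $\|\Delta_q\mathcal{A}g\|_{B^{\tau}_{p,1}}\lesssim 2^{q(\tau+1)}\|\Delta_q g\|_{L^p}$), a routine bookkeeping of frequencies yields the two borderline estimates for $s=N/p$ with the asymmetric regularity split between $f$ and $g$.

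The main obstacle, as usual in these paradifferential commutator estimates, is the $s=N/p$ case, where the Besov indices are critical and one is forced to demand one extra derivative on exactly one of the two factors; keeping careful track of which factor absorbs the loss requires a separate examination of the low-frequency ($q=-1$) contribution, where Bernstein's inequality (Lemma~\ref{lem2.1}) is used to reduce to the $L^p\times L^\infty$ pairing, and of the high-frequency paraproduct $T_{\mathcal{A}g}f$, whose commutator with $\Delta_q$ is handled by the same kernel-Taylor trick but now with the roles of $f$ and $g$ exchanged.
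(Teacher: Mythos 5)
First, a point of comparison: the paper does not prove Lemma \ref{lem4.5} at all --- it is quoted from \cite{FXZ} --- so there is no in-paper argument to measure you against. Your strategy is exactly the standard one (Danchin's, and essentially the one in the cited source): split $[f,\Delta_q]\mathcal{A}g$ via Bony's decomposition into the paracommutator $[T_f,\Delta_q]\mathcal{A}g$, the paraproduct pieces $T_{\Delta_q\mathcal{A}g}f-\Delta_q T_{\mathcal{A}g}f$ and the remainder pieces, estimate the paracommutator by the kernel/first-order Taylor bound $\|[S_{q'-1}f,\Delta_q]v\|_{L^p}\le C2^{-q}\|\nabla S_{q'-1}f\|_{L^\infty}\|v\|_{L^p}$, and use paraproduct/remainder continuity for the rest. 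The architecture is right.

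Two steps, however, are too quick as written. (i) Your blanket claim that the paracommutator piece is ``controlled by $\|\nabla f\|_{L^\infty}\|g\|_{B^{s}_{p,1}}$ for any $s>0$'' does not cover the second regime $s=N/p$, $f\in B^{N/p}_{p,1}$, $g\in B^{1+N/p}_{p,1}$: there $B^{N/p}_{p,1}$ does \emph{not} embed into $W^{1,\infty}$, so $\|\nabla f\|_{L^\infty}$ is not dominated by the right-hand side $\|f\|_{B^{N/p}_{p,1}}\|g\|_{B^{1+N/p}_{p,1}}$. The repair is to keep the frequency localization: by Bernstein (Lemma \ref{lem2.1}), $\|\nabla S_{q'-1}f\|_{L^\infty}\le C2^{q'}\|f\|_{B^{N/p}_{p,1}}$ (geometric summation over the low blocks), and since $|q'-q|\le N_0$ the product $2^{-q}\,2^{q'}\,2^{q'}\|\Delta_{q'}g\|_{L^p}$ is absorbed by the extra derivative carried by $g$, giving $Cc_q2^{-qs}\|f\|_{B^{s}_{p,1}}\|g\|_{B^{s+1}_{p,1}}$; you invoke the asymmetric split at $s=N/p$ only for the paraproduct and remainder pieces, but the paracommutator needs it too. (ii) The assertion that the paracommutator estimate ``already delivers the special case $f=g$'' is inaccurate: the commutator is the sum of all five Bony pieces, and for $f=g$ the other four must also be bounded by $\|\nabla f\|_{L^\infty}\|f\|_{B^{s}_{p,1}}$. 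This does hold for every $s>0$ --- e.g. $2^{qs}\|\Delta_q T_{\mathcal{A}f}f\|_{L^p}\le Cc_q\|\mathcal{A}f\|_{L^\infty}\|f\|_{B^{s}_{p,1}}$ with $\|\mathcal{A}f\|_{L^\infty}\le C\|\nabla f\|_{L^\infty}$, and the remainders go through the continuity $R:B^{0}_{\infty,\infty}\times B^{s}_{p,1}\to B^{s}_{p,1}$ valid for $s>0$ --- but it must be said, because the remainder continuity you quote, $R:B^{s_1}_{p,1}\times B^{s_2}_{p,1}\to B^{s_1+s_2-N/p}_{p,1}$, is not the form that produces an $L^\infty$ norm of $\nabla f$. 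With these two repairs your proof is complete and coincides with the standard proof behind the citation.
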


With these lemmas for ready, now, we complete the proof of the
unform \textit{a priori} estimate (\ref{M-E14}).

\noindent\textit{\underline{Proof of Proposition \ref{prop4.1}.}}
Note that the \textit{a priori} assumption (\ref{M-E13}), we deduce
from the embedding inequality in Besov spaces that
\begin{eqnarray}
\sup_{0\leq t\leq T}
(\|(m^{\epsilon},\mathbf{v}^{\epsilon},\nabla\phi^{\epsilon})(\cdot,t)\|_{{W}^{1,\infty}})\leq
C\delta_{1}. \label{M-E41}\end{eqnarray} To ensure the smoothness of
functions $h(\epsilon m^{\epsilon})$ and $H(\epsilon m^{\epsilon})$,
together with the smallness of $\epsilon$, it suffices to choose
$0<\delta_{1}\leq\frac{\bar{\psi}}{(\gamma-1)C}$ such that
$$\frac{\gamma-1}{2}\epsilon m^{\epsilon}(t,x)+\bar{\psi}\geq \frac{\bar{\psi}}{2}>0,\ \ \ (t,x)\in[0,T]\times\mathbb{R}^{N}$$
and
$$\frac{\gamma-1}{2}\varsigma \epsilon m^{\epsilon}(t,x)+\bar{\psi}\geq\frac{\bar{\psi}}{2}>0, \ \varsigma\in[0,1],\ (t,x)\in[0,T]\times\mathbb{R}^{N}.$$

For the proof of Proposition \ref{prop4.1}, it can be divided into
the following high-frequency part and low-frequency part.
\begin{lem}[$q\geq0$]\label{lem4.6}
There exist some positive constants $K_{1}, K_{2}, \mu_{2}$
independent of $\epsilon$ such that the following estimate holds:
\begin{eqnarray}&&2^{q(\sigma-1)}\frac{d}{dt}\Big\{\frac{K_{1}}{2}2^{2q}\Big(\|\Delta_{q}m^{\epsilon}\|^2_{L^2}+\|\Delta_{q}\mathbf{v}^{\epsilon}\|^2_{L^2}+\frac{1}{\bar{n}}\|\Delta_{q}\nabla\phi^{\epsilon}\|^2_{L^2}\Big)
\nonumber\\&&+\frac{K_{2}\epsilon}{2}\mathrm{Im}\int|\xi|
\Big((\widehat{\Delta_{q}W^{\epsilon}})^{\ast}K(\xi)\widehat{\Delta_{q}W^{\epsilon}}\Big)d\xi\Big\}^{1/2}
\nonumber\\
    &&+\mu_{2}2^{q\sigma}\Big(\|\Delta_{q}m^{\epsilon}\|_{L^2}
+\|\Delta_{q}\mathbf{v}^{\epsilon}\|_{L^2}
+\|\Delta_{q}\nabla\phi^{\epsilon}\|_{L^2}\Big)
\nonumber\\
    &\leq& C2^{q\sigma}\|(W^{\epsilon},\nabla
W^{\epsilon})\|_{L^\infty}\|\Delta_{q}W^{\epsilon}\|_{L^2}
+Cc_{q}\|W^{\epsilon}\|^2_{B^{\sigma}_{2,1}}
+\frac{C2^{q\sigma}}{\epsilon}\|\Delta_{q}(h(\epsilon
m^{\epsilon})\mathbf{v}^{\epsilon})\|_{L^2}
\nonumber\\
    &&+C2^{q(\sigma-1)}\|\Delta_{q}\mathcal{G}\|_{L^2}+C2^{q\sigma}\|\Delta_{q}(H(\epsilon
m^{\epsilon})m^{\epsilon})\|_{L^2} , \label{M-E42}
\end{eqnarray}
where $K_{1},\ K_{2}$ are given by (\ref{M-E44}) and $C$ is a
uniform positive constant independent of $\epsilon$.
\end{lem}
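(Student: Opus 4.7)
The plan is to combine the three frequency-localized estimates already at my disposal -- Lemma \ref{lem4.1} (standard $L^2$ energy, which produces dissipation only of $\|\Delta_q\mathbf{v}^\epsilon\|_{L^2}$), inequality (\ref{M-E23}) of Lemma \ref{lem4.3} (the Shizuta--Kawashima correction, supplying the missing dissipation $2^{2q}\|\Delta_q m^\epsilon\|_{L^2}^2$), and inequality (\ref{M-E36}) of Lemma \ref{lem4.4} (algebraic control of $\|\Delta_q\nabla\phi^\epsilon\|_{L^2}$ by $\|\Delta_q m^\epsilon\|_{L^2}$) -- into a single coercive Lyapunov inequality at frequencies $q\geq 0$, with constants uniform in $\epsilon\in(0,1]$.

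First I would introduce the Lyapunov quantity
\[
E_q:=\tfrac{K_1}{2}2^{2q}\bigl(\|\Delta_q m^\epsilon\|_{L^2}^2+\|\Delta_q\mathbf{v}^\epsilon\|_{L^2}^2+\tfrac{1}{\bar n}\|\Delta_q\nabla\phi^\epsilon\|_{L^2}^2\bigr)+\tfrac{K_2\epsilon}{2}\mathrm{Im}\!\int|\xi|(\widehat{\Delta_q W^\epsilon})^{\ast}K(\xi)\widehat{\Delta_q W^\epsilon}\,d\xi.
\]
Since $|K(\xi)|\leq 1$, $|\xi|\simeq 2^q$ on $\mathrm{supp}\,\widehat{\Delta_q\cdot}$ for $q\geq 0$, and $\epsilon\leq 1$, Bernstein controls the cross term by $CK_2\epsilon\,2^q\|\Delta_q W^\epsilon\|_{L^2}^2$, so choosing $K_1$ suitably larger than $K_2$ yields $E_q\simeq 2^{2q}\|\Delta_q W^\epsilon\|_{L^2}^2$ uniformly in $\epsilon$. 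Then I would take $K_1 2^{2q}\times$(\ref{M-E15}) and add $K_2\times$(\ref{M-E23}): the time derivatives assemble into $\tfrac{d}{dt}E_q$ while the LHS dissipation becomes $K_1 2^{2q}\|\Delta_q\mathbf{v}^\epsilon\|_{L^2}^2+\tfrac{K_2\bar\psi}{2}2^{2q}\|\Delta_q m^\epsilon\|_{L^2}^2$. The only error in (\ref{M-E23}) that threatens coercivity is $CK_2 2^{2q}\|\Delta_q\mathbf{v}^\epsilon\|_{L^2}^2$, absorbed once $K_1\geq 2CK_2$. Inequality (\ref{M-E36}) then upgrades the dissipation to include $2^{2q}\|\Delta_q\nabla\phi^\epsilon\|_{L^2}^2$, modulo the smooth Poisson remainder $\|\Delta_q(H(\epsilon m^\epsilon)m^\epsilon)\|_{L^2}$.

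The combined estimate has the form $\tfrac{1}{2}\tfrac{d}{dt}E_q+\mu'_2\,2^{2q}\|\Delta_q W^\epsilon\|_{L^2}^2\lesssim 2^q\|\Delta_q W^\epsilon\|_{L^2}\cdot\mathcal R_q$, where $\mathcal R_q$ collects the product-type forcing, the commutators (estimated by Lemma \ref{lem4.5} at $s=\sigma-1=N/2$, yielding $\ell^1$-summable $c_q\|W^\epsilon\|_{B^\sigma_{2,1}}^2$ contributions), the linearization defect $\Delta_q\mathcal G$, and the singular remainders $\epsilon^{-1}\Delta_q(h(\epsilon m^\epsilon)\mathbf v^\epsilon)$ and $\Delta_q(H(\epsilon m^\epsilon)m^\epsilon)$. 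Using $E_q^{1/2}\simeq 2^q\|\Delta_q W^\epsilon\|_{L^2}$, dividing by $E_q^{1/2}$ removes the square on the time derivative and produces $\tfrac{d}{dt}E_q^{1/2}+\mu_2 E_q^{1/2}\leq C\mathcal R_q$; multiplying finally by $2^{q(\sigma-1)}$ turns the coercive term into $\mu_2 2^{q\sigma}(\|\Delta_q m^\epsilon\|_{L^2}+\|\Delta_q\mathbf{v}^\epsilon\|_{L^2}+\|\Delta_q\nabla\phi^\epsilon\|_{L^2})$ and the forcing into precisely the right-hand side of (\ref{M-E42}).

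The main obstacle is making Steps~1 and 2 compatible uniformly in $\epsilon$: the Kawashima cross term is weighted by $\epsilon$ in $E_q$, yet it must still deliver an $\epsilon$-independent dissipation rate. This is exactly what identity (\ref{M-E22}) guarantees -- the singular factor $\epsilon^{-1}$ hidden in $\sum_j\xi_j A^\epsilon_j(0)$ cancels the $\epsilon$ prefactor, yielding the uniform gain $\tfrac{\bar\psi}{2}2^{2q}\|\Delta_q m^\epsilon\|_{L^2}^2$ recorded in (\ref{M-E23}). Once this balance is in place, Bernstein's equivalence, the commutator estimates of Lemma \ref{lem4.5}, and the smallness assumption (\ref{M-E13}) reduce the remainder of the argument to routine bookkeeping.
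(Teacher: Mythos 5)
Your proposal is correct and follows essentially the same route as the paper: form the weighted Lyapunov functional from (\ref{M-E15}) and (\ref{M-E23}), check its equivalence to $2^{2q}\|\Delta_{q}W^{\epsilon}\|_{L^2}^2$ via the boundedness of $K(\xi)$ and $\epsilon\leq1$, absorb the bad quadratic terms by choosing the weights (the paper's (\ref{M-E44})), add (\ref{M-E36}) to recover the $\nabla\phi^{\epsilon}$ dissipation, then divide by the square root and multiply by $2^{q(\sigma-1)}$, invoking Lemma \ref{lem4.5}. The only details you gloss over (the explicit third weight $K_{3}$ for (\ref{M-E36}) and the Young absorption of the $\|\Delta_{q}m^{\epsilon}\|_{L^2}\,2^{q}\|\Delta_{q}\nabla\phi^{\epsilon}\|_{L^2}$ cross term) are indeed routine once the $m^{\epsilon}$-dissipation is in hand, exactly as in the paper.
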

\begin{proof}
Combining (\ref{M-E15}), (\ref{M-E23}) and (\ref{M-E36}), we have
\begin{eqnarray}&&\frac{d}{dt}\Big\{\frac{K_{1}}{2}2^{2q}\Big(\|\Delta_{q}m^{\epsilon}\|^2_{L^2}+\|\Delta_{q}\mathbf{v}^{\epsilon}\|^2_{L^2}+\frac{1}{\bar{n}}\|\Delta_{q}\nabla\phi^{\epsilon}\|^2_{L^2}\Big)
\nonumber\\&&+\frac{K_{2}\epsilon}{2}\mathrm{Im}\int|\xi|
\Big((\widehat{\Delta_{q}W^{\epsilon}})^{\ast}K(\xi)\widehat{\Delta_{q}W^{\epsilon}}\Big)d\xi\Big\}\nonumber\\&&+K_{1}2^{2q}\|\Delta_{q}\mathbf{v}^{\epsilon}\|^2_{L^2}
+\frac{K_{2}\bar{\psi}}{2}2^{2q}\|\Delta_{q}m^{\epsilon}\|^2_{L^2}+K_{3}2^{2q}\|\Delta_{q}\nabla\phi^{\epsilon}\|^2_{L^2}\nonumber\\
&\leq&
CK_{1}2^{2q}\|(\Delta_{q}m^{\epsilon},\Delta_{q}\mathbf{v}^{\epsilon},\Delta_{q}\nabla\phi^{\epsilon})\|_{L^2}
\Big\{\|(\nabla
m^{\epsilon},\nabla\mathbf{v}^{\epsilon})\|_{L^{\infty}}\|(\Delta_{q}m^{\epsilon},\Delta_{q}\mathbf{v}^{\epsilon})\|_{L^2}
\nonumber\\&&+\|[\mathbf{v}^{\epsilon},\Delta_{q}]\cdot\nabla
m^{\epsilon}\|_{L^2}+\|[\mathbf{v}^{\epsilon},\Delta_{q}]\cdot\nabla
\mathbf{v}^{\epsilon}\|_{L^2}+\|[m^{\epsilon},\Delta_{q}]\mathrm{div}\,
\mathbf{v}^{\epsilon}\|_{L^2}\nonumber\\&&+\|[m^{\epsilon},\Delta_{q}]\nabla
m^{\epsilon}\|_{L^2}+\frac{1}{\epsilon}\|\Delta_{q}(h(\epsilon
m^{\epsilon})\mathbf{v}^{\epsilon})\|_{L^2}\Big\}
+CK_{2}2^{2q}\|\Delta_{q}\mathbf{v}^{\epsilon}\|^2_{L^2}\nonumber\\&&+CK_{2}\epsilon2^{q}\|\Delta_{q}W^{\epsilon}\|_{L^2}(\|\Delta_{q}\mathcal{G}\|_{L^2}
+\|m^{\epsilon}\|_{L^{\infty}}\|\Delta_{q}\mathrm{div}\mathbf{v}^{\epsilon}\|_{L^2}\nonumber\\&&+\|[m^{\epsilon},\Delta_{q}]\mathrm{div}\mathbf{v}^{\epsilon}\|_{L^2}
+\|m^{\epsilon}\|_{L^{\infty}}\|\Delta_{q}\nabla
m^{\epsilon}\|_{L^2}+\|[m^{\epsilon},\Delta_{q}]\nabla
m^{\epsilon}\|_{L^2}) \nonumber\\&&+CK_{2}\|\Delta_{q}(H(\epsilon
m^{\epsilon})m^{\epsilon})\|_{L^2}\|\Delta_{q}m^{\epsilon}\|_{L^2}\label{M-E43}\\
&&+CK_{3}\Big((A\gamma)^{-\frac{1}{2}}\bar{n}^{\frac{3-\gamma}{2}}\|\Delta_{q}m^{\epsilon}\|_{L^2}
+\|\Delta_{q}(H(\epsilon
m^{\epsilon})m^{\epsilon})\|_{L^2}\Big)2^{q}\|\Delta_{q}\nabla\phi^{\epsilon}\|_{L^2},
\nonumber
\end{eqnarray}
where the uniform positive constants $K_{1},K_{2}$ and $K_{3}$
(independent of $\epsilon$) satisfy
\begin{eqnarray}
K_{2}=\frac{K_{1}}{4C},\ \
K_{3}=\frac{A\gamma\bar{\psi}}{4C^2\bar{n}^{3-\gamma}}K_{2}.
\label{M-E44}
\end{eqnarray}
Due to
\begin{eqnarray}
&&\Big|\frac{K_{2}\epsilon}{2}\mathrm{Im}\int|\xi|
\Big((\widehat{\Delta_{q}W^{\epsilon}})^{\ast}K(\xi)\widehat{\Delta_{q}W^{\epsilon}}\Big)d\xi\Big|
\nonumber\\&\leq&\frac{CK_{2}}{2}2^{2q}(\|\Delta_{q}m^{\epsilon}\|^2_{L^2}
+\|\Delta_{q}\mathbf{v}^{\epsilon}\|^2_{L^2})\ (0<\epsilon\leq1),
 \label{M-E45}
\end{eqnarray}
so we introduce these uniform constants in order to ensure
\begin{eqnarray}
&&\frac{K_{1}}{2}2^{2q}\Big(\|\Delta_{q}m^{\epsilon}\|^2_{L^2}+\|\Delta_{q}\mathbf{v}^{\epsilon}\|^2_{L^2}+\frac{1}{\bar{n}}\|\Delta_{q}\nabla\phi^{\epsilon}\|^2_{L^2}\Big)
\nonumber\\&&+\frac{K_{2}\epsilon}{2}\mathrm{Im}\int|\xi|
\Big((\widehat{\Delta_{q}W^{\epsilon}})^{\ast}K(\xi)\widehat{\Delta_{q}W^{\epsilon}}\Big)d\xi
\nonumber\\&\approx&
2^{2q}\Big(\|\Delta_{q}m^{\epsilon}\|^2_{L^2}+\|\Delta_{q}\mathbf{v}^{\epsilon}\|^2_{L^2}+\|\Delta_{q}\nabla\phi^{\epsilon}\|^2_{L^2}\Big)\label{M-E46}
\end{eqnarray}
and eliminate the quadratic terms
    $$
    CK_{3}(A\gamma)^{-\frac{1}{2}}\bar{n}^{\frac{3-\gamma}{2}}2^{q}\|\Delta_{q}m^{\epsilon}\|_{L^2}\|\Delta_{q}\nabla\phi^{\epsilon}\|_{L^2}
\ \textrm{ and }\
    CK_{2}2^{2q}\|\Delta_{q}\mathbf{v}^{\epsilon}\|^2_{L^2}
    $$
 in the
right-hand side of (\ref{M-E43}) with the aid of Young's inequality,
for similar details, see \cite{FXZ}. Dividing the resulting
inequality by
\begin{eqnarray*}&&\Big\{\frac{K_{1}}{2}2^{2q}\Big(\|\Delta_{q}m^{\epsilon}\|^2_{L^2}+\|\Delta_{q}\mathbf{v}^{\epsilon}\|^2_{L^2}+\frac{1}{\bar{n}}\|\Delta_{q}\nabla\phi^{\epsilon}\|^2_{L^2}\Big)
\nonumber\\&&+\frac{K_{2}\epsilon}{2}\mathrm{Im}\int|\xi|
\Big((\widehat{\Delta_{q}W^{\epsilon}})^{\ast}K(\xi)\widehat{\Delta_{q}W^{\epsilon}}\Big)d\xi\Big\}^{1/2}\end{eqnarray*}
after eliminating the quadratic terms, then multiplying the factor
$2^{q(\sigma-1)}$ on the both sides of inequality, we arrive at
(\ref{M-E42}) immediately with the help of Lemma \ref{lem4.5} and
the smallness of $\epsilon(0<\epsilon\leq1)$. \end{proof}

Similarly, we also have the \textit{a priori} estimate for the case
of low frequency.
\begin{lem}$(q=-1)$\label{lem4.7}
There exist some positive constants $\bar{K}_{1}, \bar{K}_{2},
\bar{K}_{3}$ and $\mu_{3}$ independent of $\epsilon$, such that the
following estimate holds:
\begin{eqnarray}&&2^{-(\sigma-1)}\frac{d}{dt}\Big\{\frac{\bar{K}_{1}}{2}2^{-2}\Big(\|\Delta_{-1}m^{\epsilon}\|^2_{L^2}
+\|\Delta_{-1}\mathbf{v}^{\epsilon}\|^2_{L^2}+\frac{1}{\bar{n}}\|\Delta_{-1}\nabla\phi^{\epsilon}\|^2_{L^2}\Big)
\nonumber\\&&+\frac{\bar{K}_{2}\epsilon}{2}\mathrm{Im}\int|\xi|
\Big((\widehat{\Delta_{-1}W^{\epsilon}})^{\ast}K(\xi)\widehat{\Delta_{-1}W^{\epsilon}}\Big)d\xi-\bar{K}_{3}\epsilon\int\Delta_{-1}\nabla\phi^{\epsilon}\cdot
\overline{\Delta_{-1}\mathbf{v}^{\epsilon}}dx\Big\}^{1/2}
\nonumber\\&&+\mu_{3}2^{-\sigma}\Big(\|\Delta_{-1}m^{\epsilon}\|_{L^2}+\|\Delta_{-1}\mathbf{v}^{\epsilon}\|_{L^2}+\|\Delta_{-1}\nabla\phi^{\epsilon}\|_{L^2}\Big)
\nonumber\\&\leq& C2^{-\sigma}\|(W^{\epsilon},\nabla
W^{\epsilon})\|_{L^\infty}\|\Delta_{-1}W^{\epsilon}\|_{L^2}
+Cc_{-1}\|W^{\epsilon}\|^2_{B^{\sigma}_{2,1}}
\nonumber\\
    &&+\frac{C}{\epsilon}2^{-\sigma}\|\Delta_{-1}(h(\epsilon
m^{\epsilon})\mathbf{v}^{\epsilon})\|_{L^2}
+C2^{-(\sigma-1)}\|\Delta_{-1}\mathcal{G}\|_{L^2}\nonumber\\
    &&+C2^{-\sigma}\|\Delta_{-1}(H(\epsilon
m^{\epsilon})m^{\epsilon})\|_{L^2},\label{M-E47}
\end{eqnarray}
where $C$ is a uniform positive constant independent of $\epsilon$.
\end{lem}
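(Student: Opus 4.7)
The proof runs in complete parallel with Lemma \ref{lem4.6}; the single structural difference is that at $q=-1$ the Bernstein-type high-frequency estimate (\ref{M-E36}) for $\nabla\phi^\epsilon$ is not available, so the dissipation on $\|\Delta_{-1}\nabla\phi^\epsilon\|_{L^2}$ has to be produced instead by the cross-term identity (\ref{M-E37}). My plan is to form a carefully weighted sum of three ingredients, mimicking the coefficient hierarchy (\ref{M-E44}).

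First, I take (\ref{M-E15}) at $q=-1$, (\ref{M-E24}), and (\ref{M-E37}), multiply them by three positive constants $\bar K_1,\bar K_2,\bar K_3$ (to be fixed), and add. Under $\tfrac{d}{dt}$ this assembles the quantity
\begin{align*}
\mathcal{E}_{-1}(t) :=\,& \tfrac{\bar K_1}{2}2^{-2}\Big(\|\Delta_{-1}m^\epsilon\|_{L^2}^{2}+\|\Delta_{-1}\mathbf{v}^\epsilon\|_{L^2}^{2}+\bar n^{-1}\|\Delta_{-1}\nabla\phi^\epsilon\|_{L^2}^{2}\Big)\\
&+\tfrac{\bar K_2\epsilon}{2}\mathrm{Im}\!\int|\xi|\,(\widehat{\Delta_{-1}W^\epsilon})^{\ast}K(\xi)\widehat{\Delta_{-1}W^\epsilon}\,d\xi -\bar K_3\epsilon\!\int\Delta_{-1}\nabla\phi^\epsilon\cdot\overline{\Delta_{-1}\mathbf{v}^\epsilon}\,dx,
\end{align*}
together with a dissipative part bounded below, up to absorbable cross-terms, by
$$\bar K_1\|\Delta_{-1}\mathbf{v}^\epsilon\|_{L^2}^{2}+\bar K_2(A\gamma)^{-1/2}\bar n^{(3-\gamma)/2}\|\Delta_{-1}m^\epsilon\|_{L^2}^{2}+\bar K_3\|\Delta_{-1}\nabla\phi^\epsilon\|_{L^2}^{2}.$$

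Second, I fix the hierarchy $\bar K_3\ll\bar K_2\ll\bar K_1$ to enforce two properties, in the spirit of (\ref{M-E45})--(\ref{M-E46}). Using $|K(\xi)|\le1$, Cauchy--Schwarz on the cross-product, and $0<\epsilon\le1$, the choice guarantees the equivalence
$$\mathcal{E}_{-1}(t)\approx \|\Delta_{-1}m^\epsilon\|_{L^2}^{2}+\|\Delta_{-1}\mathbf{v}^\epsilon\|_{L^2}^{2}+\|\Delta_{-1}\nabla\phi^\epsilon\|_{L^2}^{2};$$
simultaneously, the quadratic cross-terms of the shape $C\bar K_3\|\Delta_{-1}m^\epsilon\|_{L^2}\|\Delta_{-1}\nabla\phi^\epsilon\|_{L^2}$ arising from (\ref{M-E37}) and $C\bar K_2\|\Delta_{-1}\mathbf{v}^\epsilon\|_{L^2}^{2}$ coming from (\ref{M-E24}) are absorbed into the dissipative part via Young's inequality, leaving a uniform lower bound $\mu_3\big(\|\Delta_{-1}m^\epsilon\|_{L^2}^{2}+\|\Delta_{-1}\mathbf{v}^\epsilon\|_{L^2}^{2}+\|\Delta_{-1}\nabla\phi^\epsilon\|_{L^2}^{2}\big)$.

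Third, dividing the resulting differential inequality by $\sqrt{\mathcal{E}_{-1}(t)}$ converts $\tfrac{d}{dt}\mathcal{E}_{-1}$ into $2\sqrt{\mathcal{E}_{-1}}\,\tfrac{d}{dt}\sqrt{\mathcal{E}_{-1}}$, and by the equivalence just obtained the dissipative part becomes $\mu_3\sqrt{\mathcal{E}_{-1}}\bigl(\|\Delta_{-1}m^\epsilon\|_{L^2}+\|\Delta_{-1}\mathbf{v}^\epsilon\|_{L^2}+\|\Delta_{-1}\nabla\phi^\epsilon\|_{L^2}\bigr)$. Multiplying by $2^{-(\sigma-1)}$ yields the left-hand side of (\ref{M-E47}). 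The nonlinear right-hand sides of (\ref{M-E15}), (\ref{M-E24}), (\ref{M-E37}) then reduce, upon applying Lemma \ref{lem4.5} at $q=-1$ to the commutators $[\mathbf{v}^\epsilon,\Delta_{-1}]\nabla(\cdot)$ and $[m^\epsilon,\Delta_{-1}]\nabla(\cdot)$, to exactly the five terms displayed in the right-hand side of (\ref{M-E47}).

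The only genuinely new computation, and thus the main obstacle, is the mechanism producing dissipation for $\|\Delta_{-1}\nabla\phi^\epsilon\|_{L^2}^{2}$: at high frequency this comes for free from Bernstein applied to the Poisson equation (estimate (\ref{M-E36})), whereas at $q=-1$ it must be extracted from $-\bar K_3\epsilon\,\tfrac{d}{dt}\!\int\Delta_{-1}\nabla\phi^\epsilon\cdot\overline{\Delta_{-1}\mathbf{v}^\epsilon}$. Verifying that the sign of the principal term $(A\gamma)^{-1/2}\bar n^{(3-\gamma)/2}\bar\psi\|\Delta_{-1}m^\epsilon\|_{L^2}^{2}$ produced by $\mathcal{I}$ in (\ref{M-E40}) adds constructively to the $\|\Delta_{-1}m^\epsilon\|_{L^2}^{2}$ dissipation of (\ref{M-E24}), and that the coefficient $\bar K_3$ can still be chosen small enough for $\mathcal{E}_{-1}\ge0$, is the one delicate step; everything else is a cosmetic repetition of the high-frequency argument of Lemma \ref{lem4.6}.
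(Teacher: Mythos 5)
Your proposal is correct and follows essentially the same route the paper intends: the paper omits the detailed proof and, via Remark \ref{rem4.1}, indicates exactly this weighted combination of (\ref{M-E15}) at $q=-1$, (\ref{M-E24}) and (\ref{M-E37}), with $\bar{K}_{1},\bar{K}_{2},\bar{K}_{3}$ chosen to secure the equivalence (\ref{M-E48}) and to absorb the quadratic cross-terms, followed by division by the square root of the energy and the commutator estimates of Lemma \ref{lem4.5}. Your identification of the cross-term $-\bar{K}_{3}\epsilon\frac{d}{dt}\int\Delta_{-1}\nabla\phi^{\epsilon}\cdot\overline{\Delta_{-1}\mathbf{v}^{\epsilon}}$ as the low-frequency substitute for (\ref{M-E36}) is precisely the point of Lemma \ref{lem4.4}, so no genuinely different argument is involved.
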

\begin{rmk}\label{rem4.1}
Similar to the proof of Lemma \ref{lem4.6}, the constants
$\bar{K}_{1}, \bar{K}_{2}, \bar{K}_{3}$ are introduced to ensure
that
\begin{eqnarray}
&&\frac{\bar{K}_{1}}{2}2^{-2}\Big(\|\Delta_{-1}m^{\epsilon}\|^2_{L^2}
+\|\Delta_{-1}\mathbf{v}^{\epsilon}\|^2_{L^2}+\frac{1}{\bar{n}}\|\Delta_{-1}\nabla\phi^{\epsilon}\|^2_{L^2}\Big)
\nonumber\\&&+\frac{\bar{K}_{2}\epsilon}{2}\mathrm{Im}\int|\xi|
\Big((\widehat{\Delta_{-1}W^{\epsilon}})^{\ast}K(\xi)\widehat{\Delta_{-1}W^{\epsilon}}\Big)d\xi-\bar{K}_{3}\epsilon\int\Delta_{-1}\nabla\phi^{\epsilon}\cdot
\overline{\Delta_{-1}\mathbf{v}^{\epsilon}}dx \nonumber\\&\approx&
2^{-2}\Big(\|\Delta_{-1}m^{\epsilon}\|^2_{L^2}
+\|\Delta_{-1}\mathbf{v}^{\epsilon}\|^2_{L^2}+\|\Delta_{-1}\nabla\phi^{\epsilon}\|^2_{L^2}\Big).
\label{M-E48}
\end{eqnarray}
and eliminate some quadratic terms appearing in the right-hand side
of inequality (\ref{M-E47}).
\end{rmk}
Summing (\ref{M-E42}) on $q \in \mathbb{N}\cup\{0\}$ and adding
(\ref{M-E47}) together, according to the \textit{a priori}
 assumption (\ref{M-E13}), (\ref{M-E41}) and Moser's estimates
 (Proposition \ref{prop2.2}), we obtain the following differential inequality:
\begin{eqnarray}\frac{d}{dt} {Q}(t)+\mu_{4}\|(m^{\epsilon},\mathbf{v}^{\epsilon},\nabla\phi^{\epsilon})\|_{B^{\sigma}_{2,1}}\leq
C\delta_{1}\|(m^{\epsilon},\mathbf{v}^{\epsilon},\nabla\phi^{\epsilon})\|_{B^{\sigma}_{2,1}},\label{M-E49}\end{eqnarray}
where
\begin{eqnarray}
 {Q}(t)&=&\sum_{q\geq0}2^{q(\sigma-1)}\Big\{\frac{K_{1}}{2}2^{2q}\Big(\|\Delta_{q}m^{\epsilon}\|^2_{L^2}+\|\Delta_{q}\mathbf{v}^{\epsilon}\|^2_{L^2}+\frac{1}{\bar{n}}\|\Delta_{q}\nabla\phi^{\epsilon}\|^2_{L^2}\Big)
\nonumber\\&&+\frac{K_{2}\epsilon}{2}\mathrm{Im}\int|\xi|
\Big((\widehat{\Delta_{q}W^{\epsilon}})^{\ast}K(\xi)\widehat{\Delta_{q}W^{\epsilon}}\Big)d\xi\Big\}^{1/2}
\nonumber\\&&+\Big\{\frac{\bar{K}_{1}}{2}2^{-2}\Big(\|\Delta_{-1}m^{\epsilon}\|^2_{L^2}
+\|\Delta_{-1}\mathbf{v}^{\epsilon}\|^2_{L^2}+\frac{1}{\bar{n}}\|\Delta_{-1}\nabla\phi^{\epsilon}\|^2_{L^2}\Big)
\nonumber\\&&+\frac{\bar{K}_{2}\epsilon}{2}\mathrm{Im}\int|\xi|
\Big((\widehat{\Delta_{-1}W^{\epsilon}})^{\ast}K(\xi)\widehat{\Delta_{-1}W^{\epsilon}}\Big)d\xi
\nonumber\\
&&-\bar{K}_{3}\int\Delta_{-1}\nabla\phi^{\epsilon}\cdot
\overline{\Delta_{-1}\mathbf{v}^{\epsilon}}dx\Big\}^{1/2}
\label{M-E50}
\end{eqnarray}
and $\mu_{4}$ is some positive constant. Note that
\begin{eqnarray}
 {Q}(t)\approx\|(m^{\epsilon},\mathbf{v}^{\epsilon},
\nabla\phi^{\epsilon})(\cdot,t)\|_{B^{\sigma}_{2,1}},\ \ t\geq0,
\label{M-E51}
\end{eqnarray}
and by choosing $\delta_{1}=\min\{\frac{\mu_{4}}{2C},
\frac{\bar{\psi}}{(\gamma-1)C}\}$, we get
\begin{eqnarray}&&\|(m^{\epsilon},\mathbf{v}^{\epsilon},
\nabla\phi^{\epsilon})(\cdot,t)\|_{B^{\sigma}_{2,1}}\leq
C\|\Big(\frac{m_{0}}{\epsilon},\mathbf{v}_{0},\frac{\textbf{e}_{0}}{\epsilon}\Big)\|_{B^{\sigma}_{2,1}}\exp(-\mu_{1}t),\label{M-E52}
\end{eqnarray}
where we have used the Gronwall's inequality,
$\mu_{1}:=\frac{\mu_{4}}{2}$. This is just the inequality
(\ref{M-E14}).

Hence, the proof of Proposition \ref{prop4.1} is complete. \qed

\section{Zero-electron-mass limit}\setcounter{equation}{0}
In this section, our first aim at deducing a frequency-localization
Strichartz-type estimate which is the crucial ingredient of the
zero-electron-mass limit. For this end, we need to give a detailed
analysis of the equation of acoustics (\ref{M-E55}) with the aid of
the semigroup formulation, and obtain some dispersive estimates.
Then, by using the classical $TT^{*}$ argument, we obtain the
desired Strichartz-type estimate.

\subsection{The linearized system}
From (\ref{M-E9}), we have
\begin{equation}
    \left\{\begin{array}{l}
      m^{\epsilon}_t+\bar{\psi}\frac{div \mathbf{v}^{\epsilon}}{\epsilon}=F,\\
    \mathbf{v}^{\epsilon}_t+\mathbf{v}^{\epsilon}+\bar{\psi}\frac{\nabla
    m^{\epsilon}}{\epsilon}-\frac{h'(0)\nabla\Delta^{-1}m^{\epsilon}}{\epsilon}=G,
    \end{array}
    \right.\label{M-E53}
        \end{equation}
 where
        $F=-\mathbf{v}^{\epsilon}\cdot\nabla m^{\epsilon}-\frac{\gamma-1}{2}m^{\epsilon}\mathrm{div}\mathbf{v}^{\epsilon}$,
        $G=-\mathbf{v}^{\epsilon}\cdot \nabla \mathbf{v}^{\epsilon}-\frac{\gamma-1}{2}m^{\epsilon}\nabla m^{\epsilon}
        +\frac{\nabla \Delta^{-1}(h(\epsilon m^{\epsilon})-h'(0)\epsilon m^{\epsilon})}{\epsilon^2}$.
Following from the idea in \cite{Danchin}, we split the velocity
into a divergence-free part $\mathcal{P}\mathbf{v}^{\epsilon}$ and a
gradient part $\mathcal{Q}\mathbf{v}$, where
$\mathcal{P}=I-\nabla\Delta^{-1}\mathrm{div}$ and
$\mathcal{Q}=I-\mathcal{P}=\nabla\Delta^{-1}\mathrm{div}$. Then, the
system (\ref{M-E53}) reads
\begin{equation}\left\{\begin{array}{l}
      m^{\epsilon}_t+\bar{\psi}\frac{div Q\mathbf{v}^{\epsilon}}{\epsilon}=F,\\
    \mathcal{Q}\mathbf{v}^{\epsilon}_t+\mathcal{Q}\mathbf{v}^{\epsilon}+\bar{\psi}\frac{\nabla
    m^{\epsilon}}{\epsilon}-\frac{h'(0)\nabla\Delta^{-1}m^{\epsilon}}{\epsilon}
    =\mathcal{Q}G,\\
    \mathcal{P}\mathbf{v}^{\epsilon}_t+\mathcal{P}\mathbf{v}^{\epsilon}=\mathcal{P}G,\\
        \mathbf{v}^{\epsilon}=\mathcal{P}\mathbf{v}^{\epsilon}+
        \mathcal{Q}\mathbf{v}^{\epsilon}.
      \end{array}
    \right.\label{M-E54}
        \end{equation}
Because of
$\|\mathcal{Q}\mathbf{v}^{\epsilon}\|_{B^\sigma_{2,1}}\approx\|
d^{\epsilon}\|_{B^\sigma_{2,1}}$, where
$d^{\epsilon}=\Lambda^{-1}\mathrm{div}\mathcal{Q}\mathbf{v}^{\epsilon}$
with $\mathcal{F}(\Lambda^{-1}f)=|\xi|^{-1}\mathcal{F}f$, we shall
investigate carefully the following mixed linear equation of
acoustics:
    \begin{equation}\left\{\begin{array}{l}
      m^{\epsilon}_t+\bar{\psi}\frac{\Lambda d^{\epsilon}}{\epsilon}=F,\\
    d^{\epsilon}_t+d^{\epsilon}-\bar{\psi}\frac{\Lambda
    m^{\epsilon}}{\epsilon}-\frac{h'(0)\Lambda^{-1}m^{\epsilon}}{\epsilon}
    =\Lambda^{-1}\mathrm{div}\mathcal{Q}G,\\
        (m^{\epsilon},d^{\epsilon})|_{t=0}=(m^{\epsilon}_0,d^{\epsilon}_0),
     \end{array}
    \right.\label{M-E55}
        \end{equation}
where $d^{\epsilon}_0:=\Lambda^{-1}\mathrm{div}\mathcal{Q}
\mathbf{v}^{\epsilon}_0$.

According to the semigroup theory for evolutional equation, the
solutions $(m^{\epsilon},d^{\epsilon})$ to the   linear initial
value problem (\ref{M-E55}) can be expressed for
$U^{\epsilon}=(m^{\epsilon},d^{\epsilon})^\top$ as
    \begin{equation}
      U^{\epsilon}_t=BU^{\epsilon}+(F,\Lambda^{-1}\mathrm{div}\mathcal{Q}G)^\top,\ U^{\epsilon}(0)=U^{\epsilon}_0=(m^{\epsilon}_0,d^{\epsilon}_0)^\top,\ t\geq0,\label{M-E56}
    \end{equation}
which gives rise to
    \begin{equation}
      U^{\epsilon}(t)=S(t)U^{\epsilon}_0+\int^t_0S(t-\tau)(F,\Lambda^{-1}\mathrm{div}\mathcal{Q}G)^\top d\tau
      ,\ t\geq0, \label{M-E57}
    \end{equation}
where $S(t)U^{\epsilon}_0:=e^{tB}U^{\epsilon}_0$. Then, we analyze
the differential operator $B$ by means of its Fourier expression
$A(\xi)$ and show the long time properties of the semigroup $S(t)$.
Applying the Fourier transform to the system (\ref{M-E56}) with
$F=0$ and $G=0$, we get
            \begin{equation}
              \partial_t\widehat{U^{\epsilon}}
              =A(\xi)\widehat{U^{\epsilon}},\ \widehat{U^{\epsilon}}(0)=\widehat{U^{\epsilon}_0}, \label{M-E58}
            \end{equation}
where
$\widehat{U^{\epsilon}}(t)=\widehat{U^{\epsilon}}(\xi,t)=\mathcal{F}U^{\epsilon}(\xi,t)$,
$\xi=(\xi_1,\ldots,\xi_N)^\top$ and $A(\xi)$ is defined as
    \begin{equation}
      A(\xi)=\left(\begin{array}
      {cc}
        0 &-\frac{\bar{\psi}|\xi|}{\epsilon}\\
             \frac{\bar{\psi}|\xi|}{\epsilon}+\frac{h'(0)}{|\xi|\epsilon}&-1
    \end{array}
    \right). \label{M-E59}
    \end{equation}
The eigenvalues of the matrix $A(\xi)$ are computed from the
determinant
    $$
        \mathrm{det}(A(\xi)- {\lambda} I)=
        = \left|\begin{array}
      {cc}
        -\lambda &-\frac{\bar{\psi}|\xi|}{\epsilon}\\
             \frac{\bar{\psi}|\xi|}{\epsilon}+\frac{h'(0)}{|\xi|\epsilon}&-1-\lambda
    \end{array}
    \right|=0,
    $$
which implies
    $$
       \lambda_\pm=-\frac{1}{2}\pm \frac{i}{\epsilon}\sqrt{\bar{\psi}^2|\xi|^2+\bar{\psi}h'(0)-\frac{1}{4}\epsilon^2}
    :=-\frac{1}{2}\pm \frac{i}{\epsilon}\lambda.
        $$
   Hence, the semigroup $e^{tA}$ exhibits the expression
    $$
    e^{tA}=e^{\lambda_+t}\frac{A-\lambda_-I}{\lambda_+-\lambda_-}
    +e^{\lambda_-t}\frac{A-\lambda_+I}{\lambda_--\lambda_+}
    :=e^{\lambda_+t}P_++e^{\lambda_-t}P_-.
    $$
After a direct computation, we can verify the exact expression about
the Fourier transform $\widehat{e^{tB}} $ of the Green's function $
e^{tB}$ as
                \begin{eqnarray}
                &  &\widehat{e^{tB}}:=e^{tA}=e^{\lambda_+t}P_++e^{\lambda_-t}P_-\nonumber\\
                &=&\left(\begin{array}
      {cc}
        \frac{-\lambda_-e^{\lambda_+t}+\lambda_+e^{\lambda_-t}}{\lambda_+-\lambda_-}
        &-\frac{\bar{\psi}|\xi|}{\epsilon}\cdot\frac{e^{\lambda_+t}-e^{\lambda_-t}}{\lambda_+-\lambda_-}\\
             \left(\frac{\bar{\psi}|\xi|}{\epsilon}+\frac{h'(0)}{|\xi|\epsilon}\right)
             \frac{e^{\lambda_+t}-e^{\lambda_-t}}{\lambda_+-\lambda_-}
             &-\frac{e^{\lambda_+t}-e^{\lambda_-t}}{\lambda_+-\lambda_-}
             + \frac{-\lambda_-e^{\lambda_+t}+\lambda_+e^{\lambda_-t}}{\lambda_+-\lambda_-}
    \end{array}
    \right).\label{M-E60}
                \end{eqnarray}

\begin{lem}[Dispersive estimate]\label{L3.1}
With the above notations, when $\epsilon$ is small enough, we get
the following estimate:
\begin{equation} |I_{1,k}(t,\tau,z )|\leq
\epsilon C{2}^{ (N
-1)k}\max\{1,2^{-k}\}e^{-\frac{1}{2}t}\min\Big\{\frac{2^k}{2^k+1},\tau^{-\frac{1}{2}}\Big\}
,\label{M-E61}\end{equation}
 where  $$
    I_{1,k }(t,\tau,z)=\int_{\mathbb{R}^N}e^{i\xi\cdot z}\psi (2^{-k}|\xi|)
    \frac{e^{-\frac{1}{2}t}e^{i\tau\lambda}}{\lambda_+-\lambda_-}
    d\xi,
    $$
    and smooth function $\psi$ satisfies
$\mathrm{supp}\psi(x)\in\{x\in\mathbb{R} |\frac{1}{6} \leq x\leq 3
\}$ and $\psi(x)|_{\frac{5}{6}\leq|x|\leq \frac{12}{5}}=1$, $C>0$
denotes a uniform constant independent of $k$ and $\epsilon$.
\end{lem}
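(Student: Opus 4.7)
The plan is to first extract the $\epsilon$ factor algebraically, then establish the two entries of the minimum by separate arguments. Since $\lambda_\pm=-\tfrac12\pm\tfrac{i}{\epsilon}\lambda$ with $\lambda=\sqrt{\bar\psi^2|\xi|^2+\bar\psi h'(0)-\epsilon^2/4}$, one has $\lambda_+-\lambda_-=2i\lambda/\epsilon$, so that
$$I_{1,k}(t,\tau,z) = \frac{\epsilon\,e^{-t/2}}{2i}\int_{\mathbb R^N} e^{i(\xi\cdot z+\tau\lambda(|\xi|))}\,\frac{\psi(2^{-k}|\xi|)}{\lambda(|\xi|)}\,d\xi.$$
This already accounts for the $\epsilon$ and $e^{-t/2}$ prefactors in the claim.

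For the trivial entry $\tfrac{2^k}{2^k+1}$ of the minimum, I would bound the oscillating exponential by $1$ and pass to polar coordinates. On $\mathrm{supp}\,\psi(2^{-k}\cdot)$, $|\xi|\sim 2^k$ and (for $\epsilon$ sufficiently small) $\lambda(|\xi|)\sim\sqrt{\bar\psi^2\cdot 2^{2k}+\bar\psi h'(0)}\sim \max\{2^k,1\}$, so the integral is of order $2^{Nk}/\max\{2^k,1\}$; this is precisely $2^{(N-1)k}\max\{1,2^{-k}\}\cdot\tfrac{2^k}{2^k+1}$ up to a universal constant.

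The key step is the dispersive entry $\tau^{-1/2}$. I would use polar coordinates $\xi=r\omega$ and treat the $r$-integral, at fixed $\omega\in S^{N-1}$, as a 1D oscillatory integral with phase $\phi(r)=r\,\omega\cdot z+\tau\lambda(r)$ and amplitude $g(r)=\psi(2^{-k}r)\,r^{N-1}/\lambda(r)$. A direct differentiation yields
$$\lambda''(r)=\frac{\bar\psi^2\bigl(\bar\psi h'(0)-\epsilon^2/4\bigr)}{\lambda(r)^3},$$
which is strictly positive once $\epsilon^2<4\bar\psi h'(0)$---this is exactly where the smallness hypothesis on $\epsilon$ enters. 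Since $\phi''(r)=\tau\lambda''(r)$, Van der Corput's lemma of second order gives
$$\Bigl|\int g(r)\,e^{i\phi(r)}\,dr\Bigr|\lesssim \tau^{-1/2}\,(\min|\lambda''|)^{-1/2}\bigl(\|g\|_\infty+\|g'\|_{L^1}\bigr),$$
uniformly in $\omega$ and $z$, and integration over $\omega\in S^{N-1}$ contributes only the bounded factor $|S^{N-1}|$. Taking the minimum with the trivial bound then produces the stated estimate.

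The main technical obstacle is the sharp frequency-dependent bookkeeping in the dispersive step. One must track $|\lambda''|$, $\|g\|_\infty$ and $\|g'\|_{L^1}$ separately on the dyadic shell $r\sim 2^k$ in the two regimes $k\geq 0$ (where $\lambda\sim 2^k$, $\lambda'\sim 1$, $\lambda''\sim 2^{-3k}$) and $k<0$ (where $\lambda$, $\lambda'$ and $\lambda''$ are all of order unity, thereby producing the additional $\max\{1,2^{-k}\}$ factor), and ensure that all constants are controlled uniformly as $\epsilon\to 0$---the latter being guaranteed by the positivity of the Klein--Gordon mass $\bar\psi h'(0)$. Any potential near-stationary behavior of $\phi'$ (which occurs when $|\omega\cdot z|\sim\tau\lambda'$) is already absorbed by the second-order Van der Corput bound, so no angular splitting in $\omega$ is required to close the argument.
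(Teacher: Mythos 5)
Your reduction to the $\epsilon$-prefactor and your treatment of the trivial branch and of the low frequencies $k<0$ are fine, but the dispersive branch does not close at high frequencies, and this is a genuine gap. On the shell $r\sim 2^k$ with $k\geq 0$ one has $\lambda(r)\sim 2^k$, hence $\lambda''(r)=\bar\psi^2\bigl(\bar\psi h'(0)-\tfrac{\epsilon^2}{4}\bigr)\lambda(r)^{-3}\sim 2^{-3k}$, while $\|g\|_\infty+\|g'\|_{L^1}\sim 2^{(N-2)k}$ for $g(r)=\psi(2^{-k}r)r^{N-1}/\lambda(r)$. Second-order Van der Corput therefore yields
\begin{equation*}
\Bigl|\int g(r)e^{i\phi(r)}dr\Bigr|\lesssim \bigl(\tau\,2^{-3k}\bigr)^{-1/2}2^{(N-2)k}=\tau^{-1/2}2^{(N-\frac12)k},
\end{equation*}
which exceeds the claimed $2^{(N-1)k}\tau^{-1/2}$ by a factor $2^{k/2}$; taking the minimum with the trivial bound does not repair this (for $1\ll\tau\ll 2^k$ your estimate gives no $\tau$-decay at all, whereas the lemma asserts $\tau^{-1/2}$ decay with constant $2^{(N-1)k}$). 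The root of the problem is exactly the point you dismiss in your last sentence: when $\omega\cdot z\approx-\tau\lambda'(r)$ the radial phase is genuinely stationary, and absorbing this by $\phi''=\tau\lambda''$ is lossy precisely because the radial curvature degenerates like $2^{-3k}$ at high frequency. The $\tau^{-1/2}$ decay with the full constant $2^{(N-1)k}$ does not come from radial curvature.

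The paper's proof avoids this by never putting the $z$-dependence and the $\tau$-oscillation in the same one-dimensional integral: after rotating so that $z_2=\dots=z_N=0$, the phase $\xi\cdot z+\tau\lambda$ depends on $\xi_2$ only through $\tau\lambda$, whose $\xi_2$-derivative $\tau\bar\psi^2\xi_2/\lambda$ is nondegenerate away from the hyperplane $\{\xi_2=0\}$. A single integration by parts with the operator $\mathcal{L}=\frac{1+i\alpha\partial_{\xi_2}}{1+\tau\alpha^2}$, $\alpha=-\bar\psi^2\xi_2/\lambda$, produces the weight $(1+\tau\alpha^2)^{-1}\lesssim\bigl(1+(2^k+1)^{-2}\tau\xi_2^2\bigr)^{-1}$, whose $\xi_2$-integral is $\min\{C2^k,\,\pi(2^k+1)\tau^{-1/2}\}$ while the remaining $N-1$ variables contribute $2^{(N-1)k}$ with no loss; together with the amplitude bound $|\psi/(\lambda_+-\lambda_-)|\lesssim\epsilon(2^k+1)^{-1}$ this gives exactly (\ref{M-E61}). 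If you want to keep a Van der Corput formulation, you would have to apply it (or a first-derivative nonstationary-phase argument) in a variable transverse to $z$ rather than in $r$; as written, your argument proves a strictly weaker estimate for $k\geq0$, which would also degrade the Strichartz estimate of Proposition \ref{p3.1} built on it.
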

\begin{proof}
 Using the rotation invariant in $\xi$, we
restrict ourselves to the case when $z_{2}=\ldots=z_{N}=0$. The
estimate will follow from  the stationary phase theorem. Denoting
$\alpha(\xi):=-
\partial_{\xi_{2}}(\lambda)=-\frac{\bar{\psi}^2\xi_{2}}{\lambda}$, we
introduce the following differential operator
$$\mathcal{L}:=\frac{1+i\alpha(\xi)\partial_{\xi_{2}}}{1+\tau \alpha^2(\xi)},$$
which acts on the $\xi_{2}$ variable, and satisfies
$\mathcal{L}(e^{i\tau \lambda})=e^{i\tau \lambda}$. Easy computation
yields
    $$ ^{\top}\mathcal{L}=\frac{1}{1+\tau \alpha^2(\xi)}-i(\partial_{\xi_{2}}\alpha)
    \frac{1-\tau \alpha^2}{(1+\tau \alpha^2)^2}-\frac{i\alpha}{1+\tau
    \alpha^2}\partial_{\xi_{2}}.$$
Using the integration by parts, we obtain
$$I_{1,k}(t,\tau,z )=
\int_{\mathbb{R}^N} {}^{\top}\mathcal{L}\left[
\psi(2^{-k}|\xi|)\frac{e^{-\frac{1}{2}t}}{\lambda_+-\lambda_-}
\right]e^{i\tau\lambda +iz_{1}\xi_{1}}d \xi,$$ where
    \begin{eqnarray*}
    && ^{\top}\mathcal{L}\left[\psi(2^{-k}|\xi|)\frac{e^{-\frac{1}{2}t}}{\lambda_+-\lambda_-}
    \right]\\
& = &\left(\frac{1}{1+\tau
\alpha^2(\xi)}-i(\partial_{\xi_{2}}\alpha) \frac{1-\tau
\alpha^2}{(1+\tau
\alpha^2)^2}\right)\psi(2^{-k}|\xi|)\frac{e^{-\frac{1}{2}t}}{\lambda_+-\lambda_-}\\&
&-\frac{i\alpha}{1+\tau
\alpha^2}\partial_{\xi_{2}}\left(\psi(2^{-k}|\xi|)\frac{e^{-\frac{1}{2}t}}{\lambda_+-\lambda_-}
\right).
\end{eqnarray*}
Because of $\frac{1}{6}2^{k} \leq |\xi| \leq 3 \cdot2^k$, when
$\epsilon$ is small enough satisfying
    $$
    \bar{\psi}^2|\xi|^2 -\frac{1}{8}\epsilon^2>
    \frac{1}{2}\bar{\psi}^2|\xi|^2,  \ \bar{\psi}h'(0)-\frac{1}{8}\epsilon^2>
    \frac{1}{2}\bar{\psi}h'(0)
    \textrm{ and } 2^k\epsilon\leq 1,$$
    we have
        $$
        |\alpha|\leq C,
        $$
$$\frac{1}{1+\tau
\alpha^2}+ \frac{|\alpha|}{1+\tau \alpha^2} \leq \frac{C
}{1+(2^{k}+1)^{-2}\tau \xi_{2}^2 },$$
$$ \frac{|\partial_{\xi_2}\alpha||1-\tau \alpha^2|}{(1+\tau
\alpha^2)^2} \leq \frac{C (2^{k}+1)^{-1}}{1+(2^{k}+1)^{-2}\tau
\xi_{2}^2 },$$
$$
\left|\psi(2^{-k}|\xi|)\frac{e^{-\frac{1}{2}t}}{\lambda_+-\lambda_-}
\right|\leq C\epsilon (2^{k}+1)^{-1} e^{-\frac{1}{2}t},
$$
$$
|\partial_{\xi_{2}}(\psi(2^{-k}|\xi|)\frac{e^{-\frac{1}{2}t}}{\lambda_+-\lambda_-}
)|\leq C\epsilon 2^{- k}(2^{k}+1)^{-1} e^{-\frac{1}{2}t}.
$$
An easy computation shows that
\begin{eqnarray*}
&&|I_{1,k}(t,\tau,z )|\\
    &\leq& \epsilon
C(2^{k}+1)^{-1}\max\{1,2^{-k}\}e^{-\frac{1}{2}t}
\int_{\frac{1}{6}2^{k}\leq|\xi|\leq
3\cdot 2^k}\frac{d \xi}{1+(2^{k}+1)^{-2}\tau \xi_{2}^2}\\
&\leq & \epsilon C {2}^{ (N
-1)k}\max\{1,2^{-k}\}e^{-\frac{1}{2}t}\min\Big\{\frac{2^k}{2^k+1},\tau^{-\frac{1}{2}}\Big\}.
\end{eqnarray*}
Hence the estimate (\ref{M-E61}) holds. \end{proof}

Similarly, we can obtain the following lemma and  omit the proofs
for brevity.
\begin{lem}[Dispersive estimates]\label{L3.2}
With the above notations, when $\epsilon$ is small enough, we get
the following estimates:
\begin{equation} |I_{2,k}(t,\tau,z )|\leq
\epsilon C{2}^{ (N
-1)k}\max\{1,2^{-k}\}e^{-\frac{1}{2}t}\min\Big\{\frac{2^k}{2^k+1},\tau^{-\frac{1}{2}}\Big\}
, \label{M-E62}\end{equation}
\begin{equation}  |I_{3,k}(t,\tau,z )|\leq
C{2}^{ Nk}\max\{
1,2^{-2k}\}e^{-\frac{1}{2}t}\min\Big\{\frac{2^k}{2^k+1},\tau^{-\frac{1}{2}}\Big\},\label{M-E63}\end{equation}
\begin{equation} |I_{4,k}(t,\tau,z )|\leq
  C{2}^{  N
 k}\max\{1,2^{-k}\}e^{-\frac{1}{2}t}\min\Big\{\frac{2^k}{2^k+1},\tau^{-\frac{1}{2}}\Big\},\label{M-E64}
\end{equation}
\begin{equation} |I_{5,k}(t,\tau,z )|\leq
  C{2}^{ ( N-2)
 k}\max\{1,2^{-k}\}e^{-\frac{1}{2}t}\min\Big\{\frac{2^k}{2^k+1},\tau^{-\frac{1}{2}}\Big\}, \label{M-E65}
\end{equation}
 where  $$
    I_{2,k}(t,\tau,z)=\int_{\mathbb{R}^N}e^{i\xi\cdot z}\psi(2^{-k}|\xi|)
    \frac{e^{-\frac{1}{2}t}e^{-i\tau\lambda}}{\lambda_+-\lambda_-}
    d\xi,
    $$
     $$
    I_{3,k}(t,\tau,z)=\int_{\mathbb{R}^N}e^{i\xi\cdot z}\psi(2^{-k}|\xi|)
    \frac{\lambda_\mp e^{-\frac{1}{2}t}e^{\pm i\tau\lambda}}{\lambda_+-\lambda_-}
    d\xi,
    $$
        $$
    I_{4,k}(t,\tau,z)=\int_{\mathbb{R}^N}e^{i\xi\cdot z}\psi(2^{-k}|\xi|)\frac{\bar{\psi}|\xi|}{\epsilon}
    \frac{ e^{-\frac{1}{2}t}e^{\pm i\tau\lambda}}{\lambda_+-\lambda_-}
    d\xi,
    $$
  $$
    I_{5,k}(t,\tau,z)=\int_{\mathbb{R}^N}e^{i\xi\cdot z}\psi(2^{-k}|\xi|)\frac{h'(0)}{|\xi|\epsilon}
    \frac{ e^{-\frac{1}{2}t}e^{\pm i\tau\lambda}}{\lambda_+-\lambda_-}
    d\xi,
    $$
   and $C>0$ denotes a uniform constant
independent of $k$ and $\epsilon$.
\end{lem}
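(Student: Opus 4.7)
The plan is to mirror the stationary-phase argument of Lemma~\ref{L3.1} essentially line by line, exploiting the fact that the four integrals $I_{2,k},\ldots,I_{5,k}$ share the same oscillatory factor $e^{\pm i\tau\lambda}$ as $I_{1,k}$ and differ only through the amplitude attached to $e^{\pm i\tau\lambda}/(\lambda_+-\lambda_-)$. First, by rotational invariance in $\xi$, I would reduce to $z=(z_1,0,\ldots,0)$, and then reintroduce the non-stationary phase operator
$$
\mathcal{L}:=\frac{1+i\alpha(\xi)\partial_{\xi_2}}{1+\tau\alpha^2(\xi)},\qquad \alpha=\mp\frac{\bar{\psi}^2\xi_2}{\lambda},
$$
so that $\mathcal{L}(e^{\pm i\tau\lambda})=e^{\pm i\tau\lambda}$ (upper sign for $I_{3,k}$--$I_{5,k}$ with $e^{+i\tau\lambda}$, lower for $I_{2,k}$ and the $e^{-i\tau\lambda}$ companions), and integrate by parts in $\xi_2$. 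The bounds $|\alpha|\leq C$, $(1+\tau\alpha^2)^{-1}+|\alpha|(1+\tau\alpha^2)^{-1}\lesssim(1+(2^k+1)^{-2}\tau\xi_2^2)^{-1}$ and $|\partial_{\xi_2}\alpha|(1+|\tau\alpha^2|)/(1+\tau\alpha^2)^2\lesssim(2^k+1)^{-1}(1+(2^k+1)^{-2}\tau\xi_2^2)^{-1}$ already established in Lemma~\ref{L3.1} can be recycled verbatim.

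Next I would establish the pointwise amplitude bounds on the support $\frac{1}{6}2^k\le|\xi|\le 3\cdot 2^k$. Using $\lambda_+-\lambda_-=\frac{2i}{\epsilon}\lambda$ with $\lambda\sim\sqrt{2^{2k}+1}$, one checks: for $I_{2,k}$ the amplitude is literally the complex conjugate of that of $I_{1,k}$, so (\ref{M-E62}) coincides with (\ref{M-E61}); for $I_{3,k}$, the identity $\lambda_\mp/(\lambda_+-\lambda_-)=\mp\tfrac{1}{2}+\tfrac{i\epsilon}{4\lambda}$ yields an amplitude of size $\lesssim 1$; for $I_{4,k}$, the amplitude $\bar{\psi}|\xi|/[\epsilon(\lambda_+-\lambda_-)]=\bar{\psi}|\xi|/(2i\lambda)$ has modulus $\lesssim 2^k/\sqrt{2^{2k}+1}$; and for $I_{5,k}$, $h'(0)/[|\xi|\epsilon(\lambda_+-\lambda_-)]=h'(0)/(2i|\xi|\lambda)$ has modulus $\lesssim 1/(2^k\sqrt{2^{2k}+1})$. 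The matching $\xi_2$-derivative bounds—including the contribution from differentiating the cut-off $\psi(2^{-k}|\xi|)$, which introduces the factor $\max\{1,2^{-k}\}$ exactly as in Lemma~\ref{L3.1}—follow from explicit differentiation of $\lambda^{-1}$, $|\xi|$, and $|\xi|^{-1}$ together with the smallness $2^k\epsilon\le 1$.

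Feeding these pointwise bounds into $^{\top}\mathcal{L}$ applied to the amplitude, and reusing the integration estimate
$$
\int_{\frac{1}{6}2^k\le|\xi|\le 3\cdot 2^k}\frac{d\xi}{1+(2^k+1)^{-2}\tau\xi_2^2}\lesssim 2^{(N-1)k}(2^k+1)\min\!\left\{\frac{2^k}{2^k+1},\tau^{-1/2}\right\}
$$
already implicit in the proof of Lemma~\ref{L3.1}, a direct bookkeeping of the powers of $2^k$ and $\epsilon$ yields the prefactors $2^{Nk}\max\{1,2^{-2k}\}$ in (\ref{M-E63}), $2^{Nk}\max\{1,2^{-k}\}$ in (\ref{M-E64}) and $2^{(N-2)k}\max\{1,2^{-k}\}$ in (\ref{M-E65}). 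The main---though mild---obstacle is the case distinction between $k\ge 0$ and $k<0$: the sizes of $\lambda^{-1}\sim(2^k+1)^{-1}$ and $|\xi|^{-1}\sim 2^{-k}$ change regime at $k=0$, and this is what forces the asymmetric prefactors, in particular the stronger $\max\{1,2^{-2k}\}$ in (\ref{M-E63}), where both the amplitude $\lambda_\mp/(\lambda_+-\lambda_-)$ and its $\xi_2$-derivative scale differently in the two regimes. Once this bookkeeping is performed, no new idea beyond the proof of Lemma~\ref{L3.1} is needed, which is why the authors feel justified in omitting the calculation.
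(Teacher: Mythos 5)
Your proposal is correct and follows exactly the route the paper intends: the authors omit the proof of Lemma \ref{L3.2} precisely because it is the same stationary-phase/$\mathcal{L}$-integration-by-parts argument as Lemma \ref{L3.1}, with only the amplitude attached to $e^{\pm i\tau\lambda}/(\lambda_+-\lambda_-)$ changed, and your amplitude bounds ($\lambda_\mp/(\lambda_+-\lambda_-)=\mp\tfrac12+\tfrac{i\epsilon}{4\lambda}$, $\bar{\psi}|\xi|/(2i\lambda)$, $h'(0)/(2i|\xi|\lambda)$) together with the cut-off derivative factor and the $\xi_2$-integral estimate reproduce the stated prefactors in both regimes $k\geq 0$ and $k<0$.
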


\begin{prop}[Strichartz-type estimate]\label{p3.1}
Suppose $U^{\epsilon}$ is the solution of the  system (\ref{M-E56})
 with
$$\mathrm{supp}\hat{U}_{0}\cup \left\{\bigcup_{t\geq 0}\mathrm{supp}
(\hat{F}(t),\hat{G}(t))^\top\right\}\in
 \mathcal{C}_{k}=\left\{
 \xi\in\mathbb{R}^N\Big|\frac{5}{6}2^{k}\leq|\xi|\leq \frac{12}{5}\cdot 2^k
 \right\}.$$
When $\epsilon$ is small enough, we get the following estimate
\begin{equation}\|U^{\epsilon}\|_{L^1(\mathbb{R}^{+};L_{x}^{p})}
\underline{}\leq\left\{\begin{array}{ll}
C{2}^\frac{Nk(p-2)}{2p}\epsilon^{\frac{p-2
}{4p}}(\|U^{\epsilon}_{0}\|_{L^2}
+\|(F,G)\|_{L^1(\mathbb{R}^{+};L^2)})&
k\geq0,\\
C{2}^\frac{(N-4)k(p-2)}{2p}\epsilon^{\frac{p-2}{4p}}(\|U^{\epsilon}_{0}\|_{L^2}
+\|(F,G)\|_{L^1(\mathbb{R}^{+};L^2)}) &k<0,
\end{array}\right.  \label{M-E66}\end{equation}
where $p\in [2,+\infty]$.
\end{prop}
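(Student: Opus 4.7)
The plan is to reduce to a homogeneous Strichartz estimate via Duhamel's formula and Minkowski's inequality, then establish that estimate by interpolating between a trivial energy endpoint at $p=2$ and a dispersive endpoint at $p=\infty$ obtained via the classical $TT^*$ method. Concretely, from the Duhamel formula (\ref{M-E57}) and Minkowski's inequality applied in the $L^1_t$-variable,
\[
\|U^\epsilon\|_{L^1(\mathbb{R}^+;L^p_x)}\leq \|S(\cdot)U^\epsilon_0\|_{L^1_tL^p_x}+\int_0^\infty \bigl\|S(\cdot)\bigl(F(\tau),\Lambda^{-1}\mathrm{div}\mathcal{Q}G(\tau)\bigr)^\top\bigr\|_{L^1_tL^p_x}\,d\tau,
\]
and, since $\Lambda^{-1}\mathrm{div}$ is bounded on $L^2$, the task reduces to proving a homogeneous Strichartz estimate $\|S(t)V\|_{L^1(\mathbb{R}^+;L^p_x)}\leq C\,\Phi(k,\epsilon,p)\,\|V\|_{L^2_x}$ for $V$ with $\mathrm{supp}\,\widehat V\subset \mathcal{C}_k$, where $\Phi$ is the prefactor in the statement.

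At the endpoint $p=2$, the explicit formula (\ref{M-E60}) together with $\mathrm{Re}\,\lambda_\pm=-1/2$ yields $\|S(t)V\|_{L^2_x}\lesssim e^{-t/2}\|V\|_{L^2_x}$, and integrating in $t$ gives the desired bound. For the endpoint $p=\infty$, I would apply the $TT^*$ method: viewing $TV(t):=S(t)V$ as an operator $L^2_x\to L^1_tL^\infty_x$, one has $\|T\|^2=\|TT^*\|_{L^\infty_tL^1_x\to L^1_tL^\infty_x}$ with $TT^*F(t)=\int_0^\infty S(t)S^*(s)F(s)\,ds$. The Fourier symbol of the associated kernel is $e^{tA(\xi)}e^{sA(\xi)^*}$; expanding $e^{tA}=e^{\lambda_+t}P_++e^{\lambda_-t}P_-$ (and analogously for $A^*$), this symbol equals $e^{-(t+s)/2}$ times a linear combination of four oscillatory exponentials $e^{\pm i\lambda(t\pm s)/\epsilon}$ weighted by products of spectral projections. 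Repeating verbatim the stationary-phase argument of Lemmas~\ref{L3.1}--\ref{L3.2} with the new phase parameter $(t-s)/\epsilon$ (which gives the dominant decay when $t\approx s$) produces
\[
\|K_{TT^*,k}(t,s,\cdot)\|_{L^\infty_x}\leq C\,\Pi(k)\,e^{-(t+s)/2}\min\bigl\{\pi(k),\,(|t-s|/\epsilon)^{-1/2}\bigr\},
\]
with $(\Pi(k),\pi(k))=(2^{Nk},1)$ for $k\geq 0$ and the corresponding values $(2^{(N-4)k},2^k)$ for $k<0$ (the latter accounting for the anisotropic growth of the projections $P_\pm$ at low frequency, whose $(2,1)$-entry is of order $2^{-k}$).

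A change of variables $u=t-s,\,v=t+s$ in the double integral and direct computation then give
\[
\iint_{t,s\geq 0}\|K_{TT^*,k}\|_{L^\infty_x}\,dt\,ds\leq C\,\Pi(k)\int_0^\infty e^{-v/2}\int_{-v}^v\min\bigl\{\pi(k),(|u|/\epsilon)^{-1/2}\bigr\}\,du\,dv\leq C\,\Pi(k)\sqrt\epsilon,
\]
so that $\|T\|_{L^2_x\to L^1_tL^\infty_x}\leq C\,\Pi(k)^{1/2}\epsilon^{1/4}$, matching the $p=\infty$ prefactor in both frequency regimes. Complex interpolation between the $p=2$ and $p=\infty$ endpoints then produces the stated bound for every $p\in[2,\infty]$, and the forcing term is handled identically via the Minkowski reduction above. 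The main obstacle is the stationary-phase analysis of the $TT^*$ symbol $e^{tA(\xi)}e^{sA(\xi)^*}$: unlike the single-propagator kernels of Lemmas~\ref{L3.1}--\ref{L3.2}, four oscillatory terms with phases $(t\pm s)/\epsilon$ now appear, and one must identify the $|t-s|/\epsilon$ decay as the relevant one, then combine it with the $e^{-(t+s)/2}$ damping to extract the crucial $\sqrt\epsilon$ gain which, upon square-rooting, yields the $\epsilon^{1/4}$ factor at $p=\infty$ and, by interpolation, the $\epsilon^{(p-2)/(4p)}$ factor in the statement. Careful bookkeeping of the (direction-dependent) size of the spectral projections $P_\pm$ is the additional subtlety in the low-frequency regime $k<0$.
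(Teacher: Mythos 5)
Your proposal is correct and follows essentially the same route as the paper: Duhamel/Minkowski reduction to the homogeneous propagator, an $e^{-t/2}$ energy bound at $p=2$, a $TT^{*}$ argument in which the composed kernel carries the damping $e^{-(t+s)/2}$ and the oscillation $\lambda(t-s)/\epsilon$, so that the dispersive bounds of Lemmas \ref{L3.1}--\ref{L3.2} yield the $\epsilon^{1/4}$ gain with prefactor $2^{Nk/2}$ (resp.\ $2^{(N-4)k/2}$) at $p=\infty$, followed by interpolation. The only cosmetic difference is that the paper runs $TT^{*}$ entrywise on the scalar kernels $I_{j,k}$ and invokes the identity $\hat{I}_{1,k}(t,\tau,-\xi)\overline{\hat{I}_{1,k}(s,\tfrac{s}{\epsilon},-\xi)}=\hat{I}_{1,k}\bigl(t+s,\tfrac{t-s}{\epsilon},-\xi\bigr)\tfrac{\psi(2^{-k}|\xi|)}{\lambda_{+}-\lambda_{-}}$, so the already-proved dispersive lemmas apply directly to the composed kernel, rather than redoing the stationary-phase analysis on the full matrix symbol $e^{tA}e^{sA^{*}}$ as you suggest.
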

\begin{proof}
  Duhamel's formula enable us to restrict our attentions to the case
$(F,G)=0$. if we can obtain
\begin{equation}\|S(t)U^{\epsilon}_{0}\|_{L^1(\mathbb{R}^{+};L_{x}^{p})}
\leq \left\{\begin{array}{ll}
C{2}^\frac{Nk(p-2)}{2p}\epsilon^{\frac{p-2
}{4p}}\|U^{\epsilon}_{0}\|_{L^2}&
k\geq0,\\
C{2}^\frac{(N-4)k(p-2)}{2p}\epsilon^{\frac{p-2}{4p}}\|U^{\epsilon}_{0}\|_{L^2}
&k<0,
\end{array}\right.
\label{M-E67}
\end{equation}
by the same computations as that in \cite{Chemin2006}, we can get
the estimate on $$\int_{0}^{t}S({t-\tau})
(F,\Lambda^{-1}\mathrm{div}\mathcal{Q}G)^\top(\tau)d\tau$$ by Fubini
theorem and interpolation. Finally, we can get (\ref{M-E66}).

In the following, we mainly prove the estimate (\ref{M-E67}). For
simplicity, we only estimate the term
    $$
    A_{1,k}d^{\epsilon}_0=\int_{\mathbb{R}^N}I_{1,k}(t,\tau,x-y)d^{\epsilon}_0(y)dy,
    $$
 where  $$
    I_{1,k}(t,\tau,z)=\int_{\mathbb{R}^N}e^{i\xi\cdot z}\psi(2^{-k}|\xi|)
    \frac{e^{-\frac{1}{2}t}e^{i\tau\lambda}}{\lambda_+-\lambda_-}
    d\xi,
    $$
  $\tau=\frac{t}{\epsilon}$, and $\psi$ satisfies
supp$\psi(x)\in\{x\in\mathbb{R} |\frac{1}{6} \leq x\leq 3 \}$ and
$\psi(x)|_{\frac{5}{6}\leq|x|\leq \frac{12}{5}}=1$.

    Now we shall use the
$TT^{*}$ argument. Define
$$B:=\{a \in \mathcal{D}(\mathbb{R}^{+}\times \mathbb{R}^{N}),\|a\|
_{L^{\infty}(\mathbb{R}^{+};L^1 )}\leq 1\},$$ we have
\begin{eqnarray*}
\|
{A_{1,k}}d^{\epsilon}_{0}\|_{L^1(\mathbb{R}^{+};{L_{x}^{\infty}})}
 &=& \sup_{a \in B}\int _{\mathbb{R}^{+}\times
 \mathbb{R}^{2N}}I_{1,k}(t,\tau,x-y)d^{\epsilon}_{0}(y)
 a(t,x)dt dxdy\\
 & \leq & \|d^{\epsilon}_{0}\|_{L^2}\sup_{a \in B}\|\int _{\mathbb{R}^{+}}\breve{I}_{1,k}
 (t,\tau,\cdot)\ast
 a(t,\cdot)dt\|_{L^2},
 \end{eqnarray*}
where $\breve{b}(x)=b(-x)$. Let
$$\Phi := \|\int _{\mathbb{R}^{+}}\breve{I}_{1,k}
 (t,\tau, \cdot)\ast a(t,\cdot)dt\|_{L^2}.$$
 By the Fourier-Planchered theorem, we have
\begin{eqnarray*}
\Phi^2&=&C\int_{(\mathbb{R}^{+})^2\times
\mathbb{R}^N}\hat{I}_{1,k}(t,\tau,-\xi)\hat{a}(t,\xi)
\bar{\hat{I}}_{1,k}(s,\frac{s}{\epsilon},-\xi)\bar{\hat{a}}(s,\xi)dt
ds d\xi.
\end{eqnarray*}
Note that the following identity holds
\begin{eqnarray*}\hat{I}_{1,k}(t,\tau,-\xi )\bar{\hat{I}}_{1,k}(s,\frac{s}{\epsilon},-\xi)
=\hat{I}_{1,k}(t+s,\frac{t-s}{\epsilon},-\xi)\frac{
\psi(2^{-k}|\xi|)}{\lambda_+-\lambda_-},\end{eqnarray*} from Lemma
\ref{L3.1}, it follows that
\begin{eqnarray*}
&&\Phi^2\\
 &=&C\int_{(\mathbb{R}^{+})^2\times \mathbb{R}^N}\frac{
\psi(2^{-k}|\xi|)}{\lambda_+-\lambda_-}\mathcal{F}(\breve{I}_{1,k}
 (t+s,\frac{t-s}{\epsilon},\cdot )\ast
 a(t,\cdot )) \bar{\hat{a}}(s,\xi )dt ds d\xi \\
 &=& C\int_{(\mathbb{R}^{+})^2\times
\mathbb{R}^N}(\breve{I}_{1,k}
 (t+s,\frac{t-s}{\epsilon},\cdot )\ast
 a(t,\cdot ))(x ) \mathcal{F}^{-1}\Big\{\frac{
\psi(2^{-k}|\xi|)}{\lambda_+-\lambda_-}
\hat{a}\Big\}(s,x )dt ds dx \\
& \leq & C\epsilon (2^{k}+1)^{-1}
\int_{(\mathbb{R}^{+})^2}\|\breve{I}_{1,k}
 (t+s,\frac{t-s}{\epsilon},\cdot )\ast
a(t,\cdot ) \|_{L_{x }^{\infty} }\|a(s,\cdot )\|_{L_{x }^1 }dt ds\\
&\leq& C\epsilon (2^{k}+1)^{-1}\int_{(\mathbb{R}^{+})^2}
\|\breve{I}_{1,k}
 (t+s,\frac{t-s}{\epsilon},\cdot )\|_{L_x^{\infty}}
 \|a(t,\cdot )\|_{L_{x}^1}\|a(s,\cdot )\|_{L_{x}^1}dt ds
\\
 &\leq& \epsilon^2 C{2}^{(N-2)k}
 \int_{(\mathbb{R}^{+})^2}\min\Big\{\frac{2^k}{2^k+1},\frac{\epsilon^{\frac{1}{2}}}{(t-s)^{\frac{1}{2}}}\Big\}
 e^{-\frac{1}{2}(t+s)}dt
 ds\\
 &\leq& C {2}^{(N-2)k} \epsilon^{\frac{5}{2}}.
\end{eqnarray*}
Then we have
    $$
    \|A_{1,k}d^{\epsilon}_0\|_{L^1(\mathbb{R}^{+};L^{\infty})}
\leq C \epsilon^{\frac{5}{4}}2^\frac{(N-2)k}{2}
\|d^{\epsilon}_{0}\|_{L^2}.
    $$
Similarly, from Lemma \ref{L3.2}, we can estimate the other terms in
$S(t)U^{\epsilon}_0$ and obtain
$$\|U^{\epsilon}\|_{L^1(\mathbb{R}^{+};L^{\infty})}
\leq\left\{\begin{array}{ll}
C{2}^\frac{Nk}{2}\epsilon^{\frac{1}{4}}\|U^{\epsilon}_{0}\|_{L^2}&
k\geq0,\\
C{2}^\frac{(N-4)k}{2}\epsilon^{\frac{1}{4}}\|U^{\epsilon}_{0}\|_{L^2}
&k<0.
\end{array}\right.
$$
Similarly, we can obtain that the
$L^{1}({\mathbb{R}^{+};L^2}(\mathbb{R}^N)) $ norm of $U^{\epsilon}$
is bounded, uniformly in $\epsilon$,  by interpolation, we get
(\ref{M-E67}). \end{proof}

\subsection{Global convergence}
\begin{prop}\label{P-5.2} Suppose $(m^{\epsilon},\mathbf{v}^\epsilon,\nabla\phi^\epsilon)$
is the solution of the  system
    (\ref{M-E9})-(\ref{M-E10}), then we have
     \begin{equation}
    \|(m^{\epsilon},\mathcal{Q}\mathbf{v}^{\epsilon})\|_{L^1
(\mathbb{R}^{+};B^\frac{N}{p}_{p,1})}
 \leq \left\{\begin{array}{ll}
 C\epsilon^{\frac{(p-2)^2N}{8p(3p-4)}},
 & 1-\frac{(p-2)(N-4)}{2p}>0,\\
  C_\beta \epsilon^{\frac{p-2-\beta}{4p}}, &  1-\frac{(p-2)(N-4)}{2p}=0, \\
  C \epsilon^{\frac{p-2}{4p}}, &  1-\frac{(p-2)(N-4)}{2p}<0,
 \end{array}\label{M-E68}
 \right.
    \end{equation}
      \begin{equation}
    \|\nabla\phi^{\epsilon}\|_{L^1
(\mathbb{R}^{+};B^\frac{N}{p}_{p,1})}
 \leq \left\{\begin{array}{ll}
 C\epsilon^{\frac{(p-2)^2N}{32p( p-1 )}},
 & 2-\frac{(p-2)(N-4)}{2p}>0,\\
  C_\beta \epsilon^{\frac{p-2-\beta}{4p}}, &  2-\frac{(p-2)(N-4)}{2p}=0, \\
  C \epsilon^{\frac{p-2}{4p}}, &  2-\frac{(p-2)(N-4)}{2p}<0,
 \end{array}\label{M-E69}
 \right.
    \end{equation}
where $p\in[2,\infty]$ and $\beta\in(0,p-2)$.
\end{prop}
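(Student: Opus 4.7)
My plan is to apply the frequency-localized Strichartz estimate of Proposition \ref{p3.1} dyadically and to interpolate against the uniform $L^{\infty}\cap L^{1}(\mathbb{R}^{+};B^{\sigma}_{2,1})$ control from Proposition \ref{prop4.1} through a frequency cutoff. First, I will gather the inputs to Proposition \ref{p3.1}. The exponential-decay estimate (\ref{M-E14}) and the algebra property of $B^{\sigma}_{2,1}$ with $\sigma=1+N/2$ (Proposition \ref{prop2.3}) immediately give $\|(m^{\epsilon},\mathbf{v}^{\epsilon},\nabla\phi^{\epsilon})\|_{L^{1}(\mathbb{R}^{+};B^{\sigma}_{2,1})}\leq C$ uniformly in $\epsilon$, and by tame product estimates $\|(F,G)\|_{L^{1}(\mathbb{R}^{+};B^{N/2}_{2,1})}\leq C$. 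The only nontrivial point here is the Poisson term $\nabla\Delta^{-1}(h(\epsilon m^{\epsilon})-h'(0)\epsilon m^{\epsilon})/\epsilon^{2}$ in $G$: expanding $h$ to second order rewrites it as $\nabla\Delta^{-1}(H(\epsilon m^{\epsilon})(m^{\epsilon})^{2})$, which neutralises the singular $\epsilon^{-2}$ factor.

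Next, applying Proposition \ref{p3.1} to each dyadic block $\Delta_{k}U^{\epsilon}$ of $U^{\epsilon}=(m^{\epsilon},\Lambda^{-1}\mathrm{div}\,\mathcal{Q}\mathbf{v}^{\epsilon})^{\top}$ yields the block-wise bound
$$\|\Delta_{k}U^{\epsilon}\|_{L^{1}(\mathbb{R}^{+};L^{p})} \lesssim 2^{\alpha_{k}}\,\epsilon^{(p-2)/(4p)}\,\bigl(\|\Delta_{k}U^{\epsilon}_{0}\|_{L^{2}}+\|\Delta_{k}(F,G)\|_{L^{1}(L^{2})}\bigr),$$
with $\alpha_{k}$ as given in Proposition \ref{p3.1}. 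Multiplication by the Besov weight $2^{kN/p}$ produces a combined factor $2^{kN/2}$ for $k\geq 0$ and $2^{k[(N-4)/2+4/p]}$ for $k<0$. To convert these block estimates into the $L^{1}(B^{N/p}_{p,1})$ norm I split the dyadic sum at a cutoff $2^{K}$: on $\{k>K\}$, Bernstein together with the uniform $L^{1}(B^{\sigma}_{2,1})$ bound gives a high-frequency tail of order $2^{-K}$; on $\{k\leq K\}$, the Strichartz estimate gives a contribution of size $\epsilon^{(p-2)/(4p)}\cdot 2^{K\,a(p,N)}$ where $a(p,N)$ collects the net growth of the dyadic sum and is directly governed by the sign of $1-(p-2)(N-4)/(2p)$. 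Optimizing $K=K(\epsilon)$ to balance $\epsilon^{(p-2)/(4p)}\,2^{Ka(p,N)}$ against $2^{-K}$ yields, after algebraic simplification, the case-1 exponent $(p-2)^{2}N/(8p(3p-4))$; when the above condition is strictly negative the Strichartz sum converges unconditionally and one recovers the raw Strichartz rate $\epsilon^{(p-2)/(4p)}$ of case 3; at the threshold, a logarithmic divergence produces the loss parametrised by $\beta\in(0,p-2)$ in case 2.

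For $\nabla\phi^{\epsilon}$ I would exploit the Poisson equation to write $\nabla\phi^{\epsilon}=h'(0)\nabla\Delta^{-1}m^{\epsilon}+\epsilon\,\nabla\Delta^{-1}\bigl(H(\epsilon m^{\epsilon})(m^{\epsilon})^{2}\bigr)$. The first summand inherits convergence from $m^{\epsilon}$ but gains one derivative of smoothing through $\nabla\Delta^{-1}$, which shifts the cutoff-optimization balance and produces the modified exponent $(p-2)^{2}N/(32p(p-1))$ in the case-1 branch of (\ref{M-E69}); the second summand carries an explicit factor of $\epsilon$ and is easily absorbed by the same machinery.

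The principal obstacle will be the precise execution of the cutoff-optimisation algebra: correctly identifying $a(p,N)$, tracking the Bernstein loss in the high-frequency tail, and controlling the Taylor cancellation in the Poisson term must be combined in order to recover the \emph{exact} exponents in (\ref{M-E68}) and (\ref{M-E69}). A secondary subtlety is the low-frequency nonhomogeneous block $\Delta_{-1}U^{\epsilon}$, for which Proposition \ref{p3.1} is not applied in its annular form but must be replaced by the dispersive estimates of Lemmas \ref{L3.1}-\ref{L3.2} at an effective localization scale $|\xi|\sim 1$.
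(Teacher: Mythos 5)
Your setup matches the paper's strategy (uniform bounds from Proposition \ref{prop4.1}, product estimates for $(F,G)$, blockwise application of Proposition \ref{p3.1}, an energy-controlled high-frequency tail of size $M^{-1}$), but there is a genuine gap in the part that actually produces the exponents. The divergence governed by the sign of $1-\frac{(p-2)(N-4)}{2p}$ sits at the \emph{low}-frequency end $k\to-\infty$: for $k<0$ the per-block bound is $\epsilon^{\frac{p-2}{4p}}2^{\frac{(N-4)k(p-2)}{2p}}\cdot 2^{-k}$ (the factor $2^{-k}$ coming from the $\nabla\Delta^{-1}$ in $G$), which grows as $k\to-\infty$ precisely in cases 1 and 2. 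Your single cutoff $2^{K}$ at high frequency does nothing about this; summing the Strichartz bounds over all $k\le K$ is simply infinite in case 1, and your proposed balance of $2^{-K}$ against $\epsilon^{\frac{p-2}{4p}}2^{Ka(p,N)}$ does not reproduce $\epsilon^{\frac{(p-2)^2N}{8p(3p-4)}}$ (carrying it out with $a=1-\frac{(p-2)(N-4)}{2p}$ gives $\epsilon^{\frac{p-2}{2(4p-(p-2)(N-4))}}$, a different rate). The paper instead introduces a \emph{second}, low-frequency cutoff $1/J$: below it one abandons the Strichartz estimate and uses Bernstein from the uniform $L^1_t L^2$ bound, $\|P_{\le 1/J}\cdot\|_{L^1_tL^p}\lesssim J^{-\frac{(p-2)N}{2p}}$ (estimate (\ref{M-E71})), and the case-1 exponent comes from balancing this against the middle-low-frequency sum $J^{1-\frac{(p-2)(N-4)}{2p}}\epsilon^{\frac{p-2}{4p}}$ (estimate (\ref{M-E73})); the factor $N$ in the numerator of the exponent originates exactly from this Bernstein gain, which never appears in your argument. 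A further reason the low cutoff is unavoidable is that the dispersive estimates of Lemmas \ref{L3.1}--\ref{L3.2} are proved only when $\epsilon$ is small relative to the localization scale (the proof requires $\epsilon\lesssim\bar\psi\,2^{k}$ as well as $2^{k}\epsilon\le 1$), so Proposition \ref{p3.1} is only usable on an $\epsilon$-dependent window $\epsilon\lesssim 2^{k}\lesssim\epsilon^{-1}$; both $J$ and $M$ must be chosen as suitable negative powers of $\epsilon$, as in the paper.

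The same issue recurs for $\nabla\phi^{\epsilon}$: the operator $\nabla\Delta^{-1}$ is a \emph{loss} of a factor $|\xi|^{-1}$ at low frequencies, not a helpful smoothing; this worsens the low-frequency divergence to $J^{2-\frac{(p-2)(N-4)}{2p}}$ (estimate (\ref{M-E76})), and balancing against the same Bernstein gain $J^{-\frac{(p-2)N}{2p}}$ is what yields $\epsilon^{\frac{(p-2)^2N}{32p(p-1)}}$, a \emph{slower} rate than (\ref{M-E68}). Your explanation (``gains one derivative of smoothing, which shifts the balance'') points in the wrong direction, and the exponent is asserted rather than derived. Also note a smaller slip: for the low-frequency part of the nonhomogeneous norm $B^{N/p}_{p,1}$ the blocks must be measured in plain $L^p$ (weight $1$), not with the homogeneous weight $2^{kN/p}$ for $k<0$; using that weight underestimates the quantity you need to control.
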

\begin{proof}
  From Proposition \ref{prop4.1}, we get
    $
    {m}^\epsilon, {\mathbf{v}}^\epsilon,\nabla  {\phi}^\epsilon
    $ uniformly bounded in $
       L^\infty_t{B^{\sigma}_{2,1}} \cap L^1_t{B^{\sigma}_{2,1}} $.
Let $P_{\geq M}f=\mathcal{F}^{-1}(\chi_{|\xi|\geq M}\hat{f})$, then
we have
    \begin{eqnarray}
\|P_{\geq M}(m^\epsilon,\mathcal{Q}{\mathbf{v}}^\epsilon,\nabla
{\phi}^\epsilon)\|_{L^1_tB^\frac{N}{p}_{p,1}}&\leq& C
     \|P_{\geq M}(m^\epsilon,\mathcal{Q}{\mathbf{v}}^\epsilon,\nabla
{\phi}^\epsilon)\|_{L^1_tB^\frac{N}{2}_{2,1}}\nonumber\\
&   \leq &
     CM^{-1} \|P_{\geq M}(m^\epsilon,{\mathbf{v}}^\epsilon,\nabla
{\phi}^\epsilon)\|_{L^1_tB^\sigma_{2,1}},
     \ \label{M-E70}
    \end{eqnarray}
where $p\in[2,\infty]$. Let $P_{\leq
\frac{1}{M}}f=\mathcal{F}^{-1}(\chi_{|\xi|\leq
\frac{1}{M}}\hat{f})$, then we obtain
    \begin{eqnarray}
    \|P_{\leq \frac{1}{J}}(m^\epsilon,\mathcal{Q}{\mathbf{v}}^\epsilon,\nabla
{\phi}^\epsilon)\|_{L^1_tL^p}&\leq& C
    J^{-\frac{(p-2)N}{2p}} \|P_{\leq \frac{1}{J}}(m^\epsilon,\mathcal{Q}{\mathbf{v}}^\epsilon,\nabla
{\phi}^\epsilon)\|_{L^1_tL^2}\nonumber\\
&\leq&
     CJ^{-\frac{(p-2)N}{2p}} \|
     (m^\epsilon,{\mathbf{v}}^\epsilon,\nabla
{\phi}^\epsilon)\|_{L^1_tL^2}.\label{M-E71}
    \end{eqnarray}
where $p\in[2,\infty]$.

From (\ref{M-E66}), we have
\begin{eqnarray}
&&\|\dot{\Delta}_k(m^{\epsilon},d^{\epsilon})
\|_{L^1(\mathbb{R}^{+};L^{p})} \label{M-E72}\\
    &\leq&
\left\{\begin{array}{ll} C{2}^\frac{Nk(p-2)}{2p}\epsilon^{\frac{p-2
}{4p}}\|\dot{\Delta}_k(m_{0},d_0)\|_{L^2}
+\|\dot{\Delta}_k(F,G)\|_{L^1(\mathbb{R}^{+};L^2)})&
k\geq0,\\
C{2}^\frac{(N-4)k(p-2)}{2p}\epsilon^{\frac{p-2}{4p}}\|\dot{\Delta}_k(m_{0},d_0)\|_{L^2}
+\|\dot{\Delta}_k(F,G)\|_{L^1(\mathbb{R}^{+};L^2)}) &k<0,
\end{array}\right.\nonumber
\end{eqnarray}
 where $p\in [2,+\infty]$. From Theorem \ref{thm1.1} and the classical estimate
 in Besov space for the product of two
functions $B^{\frac{N}{2} }_{2,1}\times B^\frac{
N}{2}_{2,1}\hookrightarrow B^{\frac{N}{2} }_{2,1} $, we have
    $$
    \|\dot{\Delta}_k(F,G)\|_{L^1(\mathbb{R}^{+};L^2)}\leq Cc_k
    2^{-\frac{Nk}{2}},\ k\geq0,
    $$
and
    $$
    \|\dot{\Delta}_k(F,G)\|_{L^1(\mathbb{R}^{+};L^2)}\leq C2^{-k},\
    k<0,
    $$
 where $C$ is independent of $\epsilon$ and $\sum c_k\leq 1$. Let $P_{A\leq\cdot\leq
B}f=\mathcal{F}^{-1}(\chi_{A\leq|\xi|\leq B}\hat{f})$. Then, we have
    \begin{equation}\|P_{\frac{1}{J}\leq\cdot\leq
1}(m^{\epsilon},d^{\epsilon})\|_{L^1(\mathbb{R}^{+};L^{p})}
 \leq \left\{\begin{array}{ll}
 C{J}^{1-\frac{(p-2)(N-4)}{2p}}\epsilon^{\frac{p-2}{4p}},
 & 1-\frac{(p-2)(N-4)}{2p}>0,\\
  C\ln{J}\epsilon^{\frac{p-2}{4p}},
 & 1-\frac{(p-2)(N-4)}{2p}=0,\\
  C \epsilon^{\frac{p-2}{4p}}, &  1-\frac{(p-2)(N-4)}{2p}<0,
 \end{array}
 \right.\label{M-E73}
\end{equation}
and
     \begin{equation}\|P_{1\leq\cdot\leq
M}(m^{\epsilon},\mathcal{Q}\mathbf{v}^{\epsilon})\|_{L^1
(\mathbb{R}^{+};B^\frac{N}{p}_{p,1})}
 \leq C\epsilon^{\frac{p-2}{4p}},\label{M-E74}
\end{equation}
where $p\in[2,\infty]$. From (\ref{M-E70})--(\ref{M-E71}) and
(\ref{M-E73})--(\ref{M-E74}), we have
    \begin{equation}
    \|(m^{\epsilon},\mathcal{Q}\mathbf{v}^{\epsilon})\|_{L^1
(\mathbb{R}^{+};B^\frac{N}{p}_{p,1})}
 \leq \left\{\begin{array}{ll}
 C\epsilon^{\frac{(p-2)^2N}{8p(3p-4)}},
 & 1-\frac{(p-2)(N-4)}{2p}>0,\\
 C_\beta\epsilon^{\frac{p-2-\beta}{4p}},
 & 1-\frac{(p-2)(N-4)}{2p}=0,  \\
  C \epsilon^{\frac{p-2}{4p}}, &  1-\frac{(p-2)(N-4)}{2p}<0,
 \end{array}
 \right.\label{M-E75}
    \end{equation}
where $p\in[2,\infty]$, $\beta\in(0,p-2)$,
$J=\epsilon^{-\frac{p-2}{4(3p-4)}}$ and
$M=\epsilon^{-\frac{(p-2)^2N}{8p(3p-4)}}$ when
$1-\frac{(p-2)(N-4)}{2p}>0$,
$J=\epsilon^{-\frac{p-2-\beta}{2N(p-2)}}$ and
$M=\epsilon^{-\frac{p-2-\beta}{4p}}$ when
$1-\frac{(p-2)(N-4)}{2p}=0$, $J=\epsilon^{-\frac{1}{2N}}$ and
$M=\epsilon^{-\frac{p-2}{4p}}$ when $1-\frac{(p-2)(N-4)}{2p}<0$.

Similarly, we have
     \begin{equation}\|P_{\frac{1}{J}\leq\cdot\leq
1}\nabla \phi^{\epsilon} \|_{L^1(\mathbb{R}^{+};L^{p})}
 \leq \left\{\begin{array}{ll}
 C{J}^{2-\frac{(p-2)(N-4)}{2p}}\epsilon^{\frac{p-2}{4p}},
 & 2-\frac{(p-2)(N-4)}{2p}>0,\\
 C\ln{J}\epsilon^{\frac{p-2}{4p}},
 & 2-\frac{(p-2)(N-4)}{2p}=0,\\
  C \epsilon^{\frac{p-2}{4p}}, &  2-\frac{(p-2)(N-4)}{2p}<0,
 \end{array}
 \right. \label{M-E76}
\end{equation}
     \begin{equation}\|P_{1\leq\cdot\leq
M}\nabla\phi^\epsilon\|_{L^1 (\mathbb{R}^{+};B^\frac{N}{p}_{p,1})}
 \leq C\epsilon^{\frac{p-2}{4p}}, \label{M-E77}
\end{equation}
and
    \begin{equation}
    \|\nabla\phi^{\epsilon}\|_{L^1
(\mathbb{R}^{+};B^\frac{N}{p}_{p,1})}
 \leq \left\{\begin{array}{ll}
 C\epsilon^{\frac{(p-2)^2N}{32p( p-1 )}},
 & 2-\frac{(p-2)(N-4)}{2p}>0,\\
  C_\beta \epsilon^{\frac{p-2-\beta}{4p}}, &  2-\frac{(p-2)(N-4)}{2p}=0, \\
  C \epsilon^{\frac{p-2}{4p}}, &  2-\frac{(p-2)(N-4)}{2p}<0,
 \end{array}
 \right. \label{M-E78}
    \end{equation}
where $p\in[2,\infty]$ and $\beta\in(0,p-2)$. \end{proof}

Now, we consider the global well-posedness of incompressible Euler
equations (\ref{M-E4}). Following from the standard
frequency-localization method, we can easily obtain the following
result.
\begin{thm}
  There exists $\delta>0$, such that
  if $\|\mathcal{P}\mathbf{v}_0\|_{B^\sigma_{2,1}}\leq\delta$, then there
  exists a unique solution $\mathbf{u}$ to the incompressible Euler
  equations
   (\ref{M-E4}) satisfying
        \begin{equation}
          \|\mathbf{u}\|_{L^\infty(\mathbb{R}^+;B^\sigma_{2,1})\cap L^1(\mathbb{R}^+;B^\sigma_{2,1})}
          \leq C \|\mathcal{P}v_0\|_{B^\sigma_{2,1}}. \label{M-E79}
        \end{equation}
\end{thm}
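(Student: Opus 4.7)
The plan is to follow the standard Littlewood--Paley / energy method for quasilinear hyperbolic problems, exploiting the fact that the damping term $\mathbf{u}$ provides an $L^1$-in-time estimate as soon as we can close the $L^\infty_t$ one under a smallness assumption. Local existence in $\mathcal{C}([0,T];B^\sigma_{2,1})$ is produced by a Friedrichs regularization or standard fixed-point scheme, so the core issue is the uniform \textit{a priori} estimate.

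First I would apply the dyadic block $\Delta_q$ to \eqref{M-E4}, using that the Leray projector $\mathcal{P}$ commutes with $\Delta_q$ and fixes $\mathbf{u}$. This yields
\begin{equation*}
    \partial_t \Delta_q\mathbf{u} + \mathbf{u}\cdot\nabla\Delta_q\mathbf{u} + \Delta_q\mathbf{u} = -[\mathbf{u},\Delta_q]\cdot\nabla\mathbf{u} + \mathcal{Q}\Delta_q(\mathbf{u}\cdot\nabla\mathbf{u}).
\end{equation*}
Testing against $\Delta_q\mathbf{u}$ in $L^2$, the transport term vanishes because $\mathrm{div}\,\mathbf{u}=0$, while the $\mathcal{Q}$-term also disappears after integration against the solenoidal field $\Delta_q\mathbf{u}$. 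Hence
\begin{equation*}
    \tfrac{1}{2}\tfrac{d}{dt}\|\Delta_q\mathbf{u}\|_{L^2}^{2} + \|\Delta_q\mathbf{u}\|_{L^2}^{2} \leq \|[\mathbf{u},\Delta_q]\cdot\nabla\mathbf{u}\|_{L^2}\,\|\Delta_q\mathbf{u}\|_{L^2},
\end{equation*}
which I would simplify to $\tfrac{d}{dt}\|\Delta_q\mathbf{u}\|_{L^2} + \|\Delta_q\mathbf{u}\|_{L^2} \leq \|[\mathbf{u},\Delta_q]\cdot\nabla\mathbf{u}\|_{L^2}$.

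Next, I multiply by $2^{q\sigma}$ and sum over $q\geq-1$. The commutator estimate in Lemma \ref{lem4.5} (applied with $f=g=\mathbf{u}$) gives $\sum_q 2^{q\sigma}\|[\mathbf{u},\Delta_q]\cdot\nabla\mathbf{u}\|_{L^2} \leq C\|\nabla\mathbf{u}\|_{L^\infty}\|\mathbf{u}\|_{B^\sigma_{2,1}}$, and by the embedding $B^\sigma_{2,1}\hookrightarrow W^{1,\infty}$ (Lemma \ref{lem2.2}) one has $\|\nabla\mathbf{u}\|_{L^\infty}\leq C\|\mathbf{u}\|_{B^\sigma_{2,1}}$. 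Therefore
\begin{equation*}
    \tfrac{d}{dt}\|\mathbf{u}\|_{B^\sigma_{2,1}} + \|\mathbf{u}\|_{B^\sigma_{2,1}} \leq C\|\mathbf{u}\|_{B^\sigma_{2,1}}^{2}.
\end{equation*}
A standard continuity argument then shows that if $\|\mathcal{P}\mathbf{v}_0\|_{B^\sigma_{2,1}}\leq\delta$ with $\delta$ so small that $C\delta\leq 1/2$, the set $\{t\geq0: \|\mathbf{u}(t)\|_{B^\sigma_{2,1}}\leq 2\|\mathcal{P}\mathbf{v}_0\|_{B^\sigma_{2,1}}\}$ is both open and closed in $[0,T^{*})$, hence equals $[0,T^{*})$; consequently the local solution extends globally and satisfies $\|\mathbf{u}(t)\|_{B^\sigma_{2,1}} \leq \|\mathcal{P}\mathbf{v}_0\|_{B^\sigma_{2,1}} e^{-t/2}$, which delivers both the $L^\infty_t$ and $L^1_t$ parts of \eqref{M-E79} at once.

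The only subtle point, and the one I expect to require the most care, is the handling of the low-frequency block $q=-1$: there Bernstein's inequality does not relate $\|\Delta_{-1}\nabla\mathbf{u}\|_{L^2}$ to $\|\Delta_{-1}\mathbf{u}\|_{L^2}$ with a positive power of the frequency, so the linear damping $+\|\Delta_{-1}\mathbf{u}\|_{L^2}$ produced by the damping term is indispensable to match the quadratic right-hand side uniformly in $q$. Uniqueness is then a direct $L^2$-energy estimate on the difference of two solutions (again using $\mathrm{div}\,\mathbf{u}=0$ to drop the transport contribution and the damping to absorb the remaining lower-order term), so it does not require additional regularity machinery.
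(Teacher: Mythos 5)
Your proposal is correct, and it coincides with the paper's intended argument: the paper gives no proof of this theorem beyond the remark that it "follows from the standard frequency-localization method," and your dyadic energy estimate (transport term killed by $\mathrm{div}\,\mathbf{u}=0$, the $\mathcal{Q}$-part orthogonal to the solenoidal blocks, the commutator handled by Lemma \ref{lem4.5} with $f=g$, and the damping term absorbing the quadratic right-hand side under smallness to give exponential decay, hence both the $L^\infty_t$ and $L^1_t$ bounds) is exactly that method. The only trivial caveats are the sign convention in the commutator and the bootstrap constant ($2C\delta\leq 1/2$ rather than $C\delta\leq 1/2$ to get the stated decay directly), neither of which affects the argument.
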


\begin{prop}\label{P-5.3}Suppose $(m^{\epsilon},\mathbf{v}^\epsilon, \nabla\phi^\epsilon)$
is the solution of the system
    (\ref{M-E9})-(\ref{M-E10}), then
        \begin{eqnarray}
     && \|\mathcal{P}\mathbf{v}^\epsilon-\mathbf{u}\|_{L^\infty([0,T];B^\frac{N}{p}_{p,1})\cap
     L^1([0,T];B^\frac{N}{p}_{p,1})} \nonumber\\
      &\leq&\left\{\begin{array}{ll}
 C\epsilon^{\frac{(p-2)^2N}{4p(3p-4)(N+2)}},
 &  1-\frac{(p-2)(N-4)}{2p}>0   \textrm{ and }2\leq p\leq N,\\
 C\epsilon^{\frac{(p-2)^2N^2}{4p^2(3p-4)(N+2)}},
 &  1-\frac{(p-2)(N-4)}{2p}>0   \textrm{ and }N<p\leq \infty,\\
   C_\beta \epsilon^{\frac{p-2-\beta}{2p(N+2)}}, &  1-\frac{(p-2)(N-4)}{2p}=0,\ \beta\in(0,p-2), \\
  C \epsilon^{\frac{p-2}{2p(N+2)}}, &
  1-\frac{(p-2)(N-4)}{2p}<0 \textrm{ and }2\leq p\leq N,\\
  C \epsilon^{\frac{N(p-2)}{2p^2(N+2)}}, &  1-\frac{(p-2)(N-4)}{2p}<0 \textrm{ and
  }N<p\leq\infty.
 \end{array}\label{M-E80}
 \right.
 \end{eqnarray}
\end{prop}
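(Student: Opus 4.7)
The plan is to derive a forced transport equation for the error $\mathbf{w}^\epsilon := \mathcal{P}\mathbf{v}^\epsilon - \mathbf{u}$ and treat the singular terms by exploiting the fact that the Leray projector annihilates gradients. Applying $\mathcal{P}$ to the velocity equation in (\ref{M-E9}) kills the two $\epsilon^{-1}$-singular terms $\epsilon^{-1}\bar\psi\nabla m^\epsilon$ and $\epsilon^{-1}\nabla\phi^\epsilon$, yielding
\begin{equation*}
\partial_t \mathcal{P}\mathbf{v}^\epsilon + \mathcal{P}\mathbf{v}^\epsilon + \mathcal{P}(\mathbf{v}^\epsilon\cdot\nabla\mathbf{v}^\epsilon) = 0.
\end{equation*}
Writing $\mathbf{v}^\epsilon = \mathbf{u} + \mathbf{w}^\epsilon + \mathcal{Q}\mathbf{v}^\epsilon$, subtracting the equation satisfied by $\mathbf{u}$, and regrouping gives
\begin{equation*}
\partial_t \mathbf{w}^\epsilon + \mathbf{w}^\epsilon + \mathcal{P}(\mathbf{u}\cdot\nabla\mathbf{w}^\epsilon) + \mathcal{P}(\mathbf{w}^\epsilon\cdot\nabla\mathbf{u}) = \mathcal{R}^\epsilon,
\end{equation*}
where the remainder $\mathcal{R}^\epsilon$ collects the quadratic error terms $\mathcal{P}(\mathbf{w}^\epsilon\cdot\nabla\mathbf{w}^\epsilon)$ and, more importantly, all products that carry at least one factor of $\mathcal{Q}\mathbf{v}^\epsilon$, namely $\mathcal{P}(\mathcal{Q}\mathbf{v}^\epsilon\cdot\nabla\mathbf{v}^\epsilon + \mathbf{u}\cdot\nabla\mathcal{Q}\mathbf{v}^\epsilon + \mathbf{w}^\epsilon\cdot\nabla\mathcal{Q}\mathbf{v}^\epsilon)$. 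The initial data vanishes since $\mathbf{w}^\epsilon(0) = \mathcal{P}\mathbf{v}_0 - \mathbf{u}_0 = 0$.

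Next, I would localize in frequency, apply the standard $L^p$-based transport/damping estimate of Danchin type to each dyadic block, and sum with the $B^{N/p}_{p,1}$ weights. This produces a bound
\begin{equation*}
\|\mathbf{w}^\epsilon\|_{L^\infty_T B^{N/p}_{p,1}} + \|\mathbf{w}^\epsilon\|_{L^1_T B^{N/p}_{p,1}} \lesssim \|\mathcal{R}^\epsilon\|_{L^1_T B^{N/p}_{p,1}} + \int_0^T \|\nabla\mathbf{u}\|_{L^\infty}\, \|\mathbf{w}^\epsilon\|_{B^{N/p}_{p,1}}\, dt,
\end{equation*}
to which Gronwall's inequality applies since $\|\nabla\mathbf{u}\|_{L^1_T L^\infty}$ is controlled by (\ref{M-E79}). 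For the source term, the key ingredient is the paraproduct/remainder decomposition that furnishes product laws of the form $B^{\sigma}_{2,1}\cdot B^{N/p}_{p,1}\hookrightarrow B^{N/p}_{p,1}$ (valid when $2\le p\le\infty$ with an additional embedding shift when $p>N$, which is exactly what causes the case split in (\ref{M-E80})). Combining these laws with the uniform $B^{\sigma}_{2,1}$ bound of Theorem \ref{thm1.1} on $\mathbf{v}^\epsilon$ and $\mathbf{u}$ with the smallness of $\mathcal{Q}\mathbf{v}^\epsilon$ from Proposition \ref{P-5.2} then gives $\|\mathcal{R}^\epsilon\|_{L^1_T B^{N/p}_{p,1}} \lesssim \|\mathcal{Q}\mathbf{v}^\epsilon\|_{L^1_T B^{N/p}_{p,1}}$ up to an absorbable piece, i.e.\ already a power of $\epsilon$.

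To improve from the rate of Proposition \ref{P-5.2} to the sharper exponents in (\ref{M-E80}), the plan is to interpolate. One first controls $\mathbf{w}^\epsilon$ in a weak norm (say $L^\infty_T L^2$) by the same energy method, obtaining a power-of-$\epsilon$ bound; one then interpolates with the uniform high-regularity bound in $L^\infty_T B^\sigma_{2,1}$ inherited from Theorem \ref{thm1.1}. The interpolation exponent needed to reach scaling $N/p$ at integrability $p$ from scaling $N/2$ at integrability $2$ explains the extra denominator $(N+2)$ in the exponents, as well as the bifurcation between $2\le p\le N$ and $N<p\le\infty$ (which corresponds to whether $N/p\le 1$ or not, hence whether the algebra/embedding $B^{N/p}_{p,1}\hookrightarrow L^\infty$ is used directly or via a further Bernstein loss). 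The main obstacle I foresee is bookkeeping the three-case structure of Proposition \ref{P-5.2} together with the two-case structure coming from the product law, so as to confirm that the exponents $\tfrac{(p-2)^2 N}{4p(3p-4)(N+2)}$, $\tfrac{(p-2)^2 N^2}{4p^2(3p-4)(N+2)}$, etc.\ that appear in (\ref{M-E80}) are exactly the ones dictated by the interpolation; everything else is a routine application of the Besov transport machinery combined with Gronwall.
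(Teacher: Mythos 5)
Your final route --- derive the damped error equation for $w=\mathcal{P}\mathbf{v}^\epsilon-\mathbf{u}$ with $w|_{t=0}=0$ (the Leray projector killing both $\epsilon^{-1}$-gradients), close it in a weak norm ($L^2$) by energy estimate plus Gronwall, insert the smallness of $\mathcal{Q}\mathbf{v}^\epsilon$ from Proposition \ref{P-5.2}, and only at the very end interpolate with the uniform $B^\sigma_{2,1}$ bound of Theorem \ref{thm1.1}, which is what produces the denominator $N+2$ --- is exactly the paper's proof of (\ref{M-E80}), and your error equation matches (\ref{M-E81}). But the first half of your proposal contains a step that would fail, together with a misreading of the statement: the exponents in (\ref{M-E80}) are \emph{weaker} than those of Proposition \ref{P-5.2} (they are the rates of (\ref{M-E68}) multiplied by $2/(N+2)$, and additionally by $N/p$ when $p>N$), not sharper; the interpolation is a forced loss, not an improvement. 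The failing step is the claim that a direct $B^{N/p}_{p,1}$ transport estimate closes with $\|\mathcal{R}^\epsilon\|_{L^1_TB^{N/p}_{p,1}}\lesssim\|\mathcal{Q}\mathbf{v}^\epsilon\|_{L^1_TB^{N/p}_{p,1}}$: the term $\mathcal{P}(\mathbf{v}^\epsilon\cdot\nabla\mathcal{Q}\mathbf{v}^\epsilon)$ puts a full derivative on $\mathcal{Q}\mathbf{v}^\epsilon$, so any product law at regularity $N/p$ needs smallness of $\mathcal{Q}\mathbf{v}^\epsilon$ in $B^{N/p+1}_{p,1}$, and neither Proposition \ref{P-5.2} (smallness only at regularity $N/p$) nor Theorem \ref{thm1.1} (uniform boundedness, not smallness, at regularity $1+N/p$ after embedding) provides it. This is precisely why the paper retreats to the $L^2$ level, where that term only costs $\|\mathbf{v}^\epsilon\|_{L^{2p/(p-2)}}\|\nabla\mathcal{Q}\mathbf{v}^\epsilon\|_{L^p}$.

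Relatedly, your explanation of the bifurcation between $2\le p\le N$ and $N<p\le\infty$ is off: it does not come from the embedding $B^{N/p}_{p,1}\hookrightarrow L^\infty$ (which holds for every $p$) nor from the final interpolation, but from bounding $\|\nabla\mathcal{Q}\mathbf{v}^\epsilon\|_{L^1_TL^p}$ inside the $L^2$ energy inequality. When $N/p\ge1$ one has $B^{N/p}_{p,1}\hookrightarrow B^{1}_{p,1}$ and Proposition \ref{P-5.2} yields the smallness directly with its full rate; when $p>N$ one must interpolate between the small $B^{N/p}_{p,1}$ norm and the uniformly bounded $B^{1+N/p}_{p,1}$ norm, which multiplies the exponent by $N/p$ --- exactly the factor distinguishing the second and fifth lines of (\ref{M-E80}) from the first and fourth. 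To make your write-up complete you should drop (or demote to motivation) the unjustified $B^{N/p}_{p,1}$-level source bound and supply this $\nabla\mathcal{Q}\mathbf{v}^\epsilon$ estimate; with that, your plan reproduces the paper's argument, including the exponent bookkeeping $\frac{(p-2)^2N}{8p(3p-4)}\cdot\frac{2}{N+2}=\frac{(p-2)^2N}{4p(3p-4)(N+2)}$.
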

\begin{proof}
Let $w=\mathcal{P}\mathbf{v}^\epsilon-\mathbf{u}$. From
(\ref{M-E54})$_3$ and (\ref{M-E4}), we have
    \begin{equation}
    \left\{\begin{array}{l}
       \partial_tw
        +w=H,\\
         w|_{t=0}=0,
         \end{array}\right. \label{M-E81}
    \end{equation}
where $H=-\mathcal{P}(w\cdot\nabla \mathbf{u})
-\mathcal{P}(\mathbf{v}^\epsilon\cdot\nabla w)
-\mathcal{P}(\mathcal{Q}\mathbf{v}^\epsilon\cdot\nabla \mathbf{u})
-\mathcal{P}(\mathbf{v}^\epsilon\cdot\nabla
\mathcal{Q}\mathbf{v}^\epsilon)$. Then, by multiplying  by $w$ and
integrating the resulting equations over $\mathbb{R}^N$, we have
    \begin{eqnarray*}
      &&\|w\|_{L^\infty([0,T];L^2)}
      +      \|w\|_{L^1([0,T];L^2)}\\
      &\leq& C\int^T_0(\|w\|_{L^2}(\|\nabla \mathbf{u}\|_{L^\infty}+\|\nabla
      \mathbf{v}^\epsilon\|_{L^\infty})
      +\|\mathcal{Q}\mathbf{v}^\epsilon\|_{L^\infty}
      \|\nabla \mathbf{u}\|_{L^2}\\
      &&+\|\mathbf{v}^\epsilon\|_{L^\frac{2p}{p-2}}
      \|\nabla
\mathcal{Q}\mathbf{v}^\epsilon\|_{L^p})dt,
    \end{eqnarray*}
for all $T>0$. Using the Gronwall's inequality, we have
    \begin{eqnarray*}
     && \|w\|_{L^\infty([0,T];L^2)}
      +      \|w\|_{L^1([0,T];L^2)}\\
      &\leq& C e^{C\|(\nabla \mathbf{u},\nabla \mathbf{v}^\epsilon)\|_{L^1_TL^\infty}}(
      \|\mathcal{Q}\mathbf{v}^\epsilon\|_{L^1_TL^\infty}
      \|\nabla \mathbf{u}\|_{L^\infty_TL^2}+\|\mathbf{v}^\epsilon\|_{L^\infty_TL^\frac{2p}{p-2}}
      \|\nabla
\mathcal{Q}\mathbf{v}^\epsilon\|_{L^1_TL^p}),
    \end{eqnarray*}
 From
Theorem \ref{thm1.1}, we get
    $
    {\mathbf{v}^\epsilon}
    $ uniformly bounded in $
       L^\infty_t{B^{\sigma}_{2,1}}\cap L^1_t{B^{\sigma}_{2,1}} $.
From Proposition \ref{P-5.2}, we have
\begin{eqnarray*}
     && \|w\|_{L^\infty([0,T];L^2)}
      +      \|w\|_{L^1([0,T];L^2)}\\
      &\leq&\left\{\begin{array}{ll}
 C\epsilon^{\frac{(p-2)^2N}{8p(3p-4)}},
 & 1-\frac{(p-2)(N-4)}{2p}>0   \textrm{ and }2\leq p\leq N,\\
 C\epsilon^{\frac{(p-2)^2N^2}{8p^2(3p-4)}},
 & 1-\frac{(p-2)(N-4)}{2p}>0  \textrm{ and }N<p\leq\infty,\\
  C_\beta \epsilon^{\frac{p-2-\beta}{4p}}, &  1-\frac{(p-2)(N-4)}{2p}=0, \\
  C \epsilon^{\frac{p-2}{4p}}, &  1-\frac{(p-2)(N-4)}{2p}<0 \textrm{ and }2\leq p\leq N,\\
  C \epsilon^{\frac{N(p-2)}{4p^2}}, &  1-\frac{(p-2)(N-4)}{2p}<0 \textrm{ and }N<p\leq\infty.
 \end{array}
 \right.
 \end{eqnarray*}
 By interpolation, we can easily  obtain (\ref{M-E80}).
\end{proof}

\section*{Acknowledgements}
The research of Jiang Xu is partially supported by the NSFC
(11001127), China Postdoctoral Science Foundation (20110490134) and
Postdoctoral Science Foundation of Jiangsu Province (1102057C). The
research of T. Zhang is partially supported by the NSFC
(10931007, 10901137),  Zhejiang
Provincial Natural Science Foundation of China (Z6100217, Y12A010074), Program for New Century Excellent Talents in University NCET-11-0462,  and the Fundamental Research Funds for the Central Universities.

\bibliographystyle{elsarticle-num}

\begin{thebibliography}{99}

\bibitem{ACJP}
G. Al\`{\i}, L. Chen, A.  J\"{u}ngel, Y.J. Peng,
 The
zero-electron-mass limit in the hydrodynamic model for plasmas.
\textit{Nonlinear Anal.} \textbf{72}(2010),  4415--4427.



\bibitem{Chemin2006}
J.-Y.      Chemin, B.  Desjardins, I.  Gallagher, E.  Grenier,
   \textit{Mathematical geophysics. An introduction to rotating fluids and
    the Navier-Stokes equations.}
    Oxford Lecture Series in Mathematics and its Applications, 32.
    The Clarendon Press,
   Oxford University Press, Oxford, 2006

\bibitem {CG}
J.F. Coulombel, T. Goudon,
 The strong relaxation
limit of the multidimensional isothermal Euler equations.
\textit{Trans. Amer. Math. Soc.}, \textbf{359}(2007),  637--648.

\bibitem {Danchin}
R. Danchin,
  Zero Mach number limit in critical spaces for
compressible Navier-Stokes equations. \textit{Ann. Sci. \'{E}cole
Norm. Sup. (4)}, \textbf{35}(2002), 27--75.

\bibitem {FXZ}
 D.Y. Fang, J. Xu,  T.   Zhang,
  Global exponential stability of
classical solutions to the hydrodynamic model for semiconductors.
\textit{Math. Models Methods Appl. Sci.} {\bf{17}}(2007), 1507--1530.


\bibitem{GJP}
T. Goudon,  A.  J\"{u}ngel,  Y.J. Peng,
 Zero-mass-electrons limits in hydrodynamic models for plasmas. \textit{Appl. Math. Lett.}
{\bf{12}}(1999), 75--79.

\bibitem{He}
D.W. Hewett,  Low-frequency electro-magnetic (Darwin) applications in
plasma simulation.
 \textit{Comput. Phys. Commun.} {\bf 84}(1994),
243--277.

\bibitem{I}
D. Iftimie,
 The resolution of the Navier-Stokes equations in
anisotropic spaces. \textit{Rev. Mat. Iberoamericana } {\bf{15}}(1999),
1--36.

\bibitem{JP1}
 A. J\"{u}ngel,   Y.J. Peng,
 A hierarchy of hydrodynamic
models for plasmas: Zero-electron-mass limits in the drift-diffusion
equations. \textit{Ann. Inst. H. Poincar\'{e} Anal. Non
Lin\'{e}aire} {\bf{17}}(2000), 83--118.

\bibitem{KD}
 F. Kazeminezhad,  J.M. Dawson,  J.N.  Leboeuf,   R. Sydora,
D.  Holland,
   A vlasov particle ion zero mass electron model for plasma
simulations. \textit{J. Comput. Phys.} {\bf 102}(1992), 277--296.


\bibitem {KM1}
 S. Klainerman,  A.  Majda,
  Singular limits of quasilinear
hyperbolic systems with large parameters and the incompressible
limit of compressible fluids.
 \textit{Comm. Pure Appl. Math.}
{\bf{34}}(1981), 481--524.

\bibitem {KM2}
 S. Klainerman,   A. Majda,
  Compressible and incompressible fluids.
\textit{Comm. Pure Appl. Math.} {\bf{35}}, 629--651  (1982)

\bibitem{KY} S. Kawashima,  W.A. Yong,
Dissipative structure and entropy for hyperbolic systems of balance
laws. \textit{Arch. Ration. Mech. Anal.} \textbf{174}(2004), 345--364.


\bibitem{MRS}
P.A. Markowich,  C.A.  Ringhofer,   C.  Schmeiser,
\textit{Semiconductor Equations}. Springer-Verlag, Vienna, 1990

%\bibitem{SK} Y. Shizuta and S. Kawashima, Systems of equations of
%hyperbolic-parabolic type with applications to the discrete
%Boltzmann equation, \textit{Hokkaido Math. J. }14 (1985) 249--275.

\bibitem{W}
 S. Wang,
  Quasineutral limit of Euler-Poisson system with and without
  viscosity.
\textit{Commun. PDE} {\bf{29}}(2004), 419--456.


\bibitem{XY}
J. Xu,   W.-A. Yong,  Relaxation-time limits of non-isentropic
hydrodynamic models for semiconductors.
 \textit{J. Diff.
Equs.} {\bf{247}}(2009), 1777--1795.


\bibitem{XY2}
J. Xu,  W.-A.  Yong,  Zero-electron-mass limit of hydrodynamic
models for plasmas.
 \textit{Proc. Roy. Soc. Edinburgh Sect. A} {\bf{141}}(2011), 431--447.

\bibitem{Y1} W.-A. Yong,
 Diffusive relaxation limit of multidimensional
isentropic hydrodynamical models for semiconductors. \textit{SIAM J.
Appl. Math.} {\bf{64}}(2004), 1737--1748.

\bibitem{Y2}  W.-A. Yong,
Entropy and global existence for hyperbolic balance laws.
\textit{Arch. Ration. Mech. Anal.} {\bf{172}}(2004), 247--266.

\end{thebibliography}

\end{document}